\theoremstyle{plain}
\newtheorem{theorem}{Theorem}[section]
\newtheorem*{theorem*}{Theorem}
\newtheorem{cor}[theorem]{Corollary}
\newtheorem{lem}[theorem]{Lemma}
\newtheorem{prop}[theorem]{Proposition}
\theoremstyle{definition}
\newtheorem{ex}[theorem]{Example}
\newtheorem{exs}[theorem]{Examples}
\newtheorem{dfn}[theorem]{Definition}
\newtheorem{rem}[theorem]{Remark}
\newtheorem{rems}[theorem]{Remarks}
\newtheorem{ques}[theorem]{Question}
\theoremstyle{remark}
\newtheoremstyle{linked}
  {}%      Space above, empty = `usual value'
  {}%      Space below
  {\itshape}% Body font
  {}%         Indent amount (empty = no indent, \parindent = para indent)
  {\bfseries}% Thm head font
  {.}%        Punctuation after thm head
  { }%     Space after thm head: " " = normal interword space;
\theoremstyle{linked}
\newtheorem{innlinkthm}[theorem]{Theorem} % theorems with link
 \NewDocumentEnvironment{linkthm}{m o}
   { 
   \IfNoValueTF{#2}{\def\thislink{#1}\begin{innlinkthm}\label{thm:#1}}{\def\thislink{#1}\begin{innlinkthm}[#2]\label{thm:#1}}
   }
   {\end{innlinkthm}}
  \newtheorem{innlinkprop}[theorem]{Proposition} % theorems with link
  \NewDocumentEnvironment{linkprop}{m o}
   { 
   \IfNoValueTF{#2}{\def\thislink{#1}\begin{innlinkprop}\label{thm:#1}}{\def\thislink{#1}\begin{innlinkprop}[#2]\label{thm:#1}}
   }
   {\end{innlinkprop}}
\NewDocumentEnvironment{linkproof}{m o}
  {
    \IfNoValueTF{#2}
      {
        \hypertarget{InternalLink:#1}{
          \paragraph{Proof of \Cref{thm:#1}.}
        }
      }
      {
        \hypertarget{InternalLink:#1}{}
        \hypertarget{InternalLink:#2}{
          \paragraph{Proof of \Cref{thm:#1} and \Cref{thm:#2}.}
        }
      }
  }
  {
    \hfill$\qed${\parfillskip=0pt\par}
  }
\newcommand{\RNum}[1]{\uppercase\expandafter{\romannumeral #1\relax}}
\providecommand*{\twoheadrightarrowfill@}{%
  \arrowfill@\relbar\relbar\twoheadrightarrow}
\providecommand*{\twoheadleftarrowfill@}{%
  \arrowfill@\twoheadleftarrow\relbar\relbar}
\providecommand*{\xtwoheadrightarrow}[2][]{%
  \ext@arrow 0579\twoheadrightarrowfill@{#1}{#2}}
\providecommand*{\xtwoheadleftarrow}[2][]{%
  \ext@arrow 5097\twoheadleftarrowfill@{#1}{#2}}
\newcommand\setItemnumber[1]{\setcounter{enum\romannumeral\@enumdepth}{\numexpr#1-1\relax}}
\newcommand\norm[1]{\left\lVert#1\right\rVert}
\newcommand\Crefitem[2]{\nameCref{#1} \hyperref[#1:#2]{\ref*{#1}.\ref*{#1:#2}}}
\newcommand\refitem[2]{\hyperref[#1:#2]{\ref*{#1}.\ref*{#1:#2}}}
\NewDocumentCommand{\MyCref}{m o}{%
  \IfNoValueTF{#2}
    {\Cref{#1}} % If no optional argument
    {\nameCref{#1} \hyperref[#1:#2]{\ref*{#1}.\ref*{#1:#2}}}% If optional argument exists
}
\DeclareMathOperator\supp{supp}
\DeclareMathOperator\ad{ad}
\DeclareMathOperator\im{Im}
\newcommand{\R}{\mathbb{R}}
\newcommand{\Q}{\mathbb{Q}}
\newcommand{\N}{\mathbb{N}}
\newcommand{\C}{\mathbb{C}}
\newcommand{\Z}{\mathbb{Z}}
\newcommand{\cA}{\mathcal{A}}
\newcommand{\cB}{\mathcal{B}}
\newcommand{\cC}{\mathcal{C}}
\newcommand{\cK}{\mathcal{K}}
\newcommand{\cL}{\mathcal{L}}
\newcommand{\Rpt}{\mathbb{R}_+^\times}
\newcommand\bb[1]{[\![ #1]\!]}
\newcommand{\Id}{\mathrm{Id}}
\newcommand{\DO}{\mathrm{DO}}
\newcommand\Rd[1]{\cA_{#1}}
\newcommand{\singsupp}{\mathrm{singsupp}}
\newcommand{\omegahalf}{\Omega^\frac{1}{2}}
\renewcommand{\Re}{\mathrm{Re}}
\begin{document}
\title{Abstract maximal hypoellipticity and applications}
\author{Omar Mohsen\thanks{{
Université Paris Cité, Sorbonne Université, CNRS, IMJ-PRG, F-75013 Paris, France, \href{mailto:omar.mohsen@imj-prg.fr}{\texttt{omar.mohsen@imj-prg.fr}}}}}
\date{}
\maketitle
% 19K35 Kasparov theory ($KK$-theory)
% 19K56 Index theory
% 22A22 Topological groupoids (including differentiable and Lie groupoids)
% 22A25 Representations of general topological groups and semigroups
% 22E25 Nilpotent and solvable Lie groups
% 46L87 Noncommutative differential geometry
% 46L45 Decomposition theory for $C^*$-algebras
% 46L80 $K$-theory and operator algebras (including cyclic theory)
% 46L87 Noncommutative differential geometry
% 47G30 Pseudodifferential operators	
% 53C12 Foliations (differential geometric aspects)
% 53C29 Issues of holonomy
% 58J22 Exotic index theories
% 58J40 Pseudodifferential and Fourier integral operators on manifolds
% 53R30 Foliations; geometric theory
% 58B34 Noncommutative geometry (�  la Connes)
% 58H05 Pseudogroups and differentiable groupoids 
% 93B18 Linearizations
\begin{abstract}
%\blfootnote{AMS subject classification: ,~ Secondary: Keywords:}
We prove an abstract theorem of maximal hypoellipticy showing that in an abstract calculus under some natural assumptions, an operator is maximally hypoelliptic if and only if its principal symbol is left invertible.
We then show that our theorem implies various known results in the literature like regularity theorem for elliptic operators, Helffer and Nourrigat's resolution of the Rockland conjecture, Rodino's theorem on regularity of operators on products of manifolds, and our resolution of the Helffer-Nourrigat conjecture.
Other examples like our resolution of the microlocal Helffer-Nourrigat conjecture will be given in a sequel to this paper.

Our arguments are based on the theory of $C^*$-algebras of Type I.
%In 1979, Helffer and Nourrigat proved the Rockland conjecture on the equivalence between hypoellipticity of a left-invariant differential operator $D$ on a simply connected graded nilpotent Lie group $G$ and 
%injectivity of $\pi(D)$ for all irreducible unitary non-trivial representations $\pi$ of $G$. 

%In this article, we first give a fairly self-contained $C^*$-algebra proof of Helffer and Nourrigat's theorem. 
%Second, we give an if and only if criteria to check when $\pi(D)$ has compact resolvent.
%Third, we apply our result to get an if and only if criteria for compactness of the resolvent of the Fokker-Planck operator with polynomial potential. 
%Finally, using our criteria, we prove a conjecture of Helffer and Nier on the equivalence between  compactness of the resolvent of the Fokker-Planck operator and compactness of the resolvent of Witten deformation for polynomial potentials.
\end{abstract}
\setcounter{tocdepth}{2} %doesn't display subsections in TOC 
\tableofcontents

\section*{Introduction}
Let $D:C^\infty(M)\to C^\infty(M)$ be a linear differential operator on a smooth manifold $M$. We say that $D$ is hypoelliptic if for any distribution $u$ on $M$, $\singsupp(Du)=\singsupp(u)$.
A well-known easy result is that if $D$ is a constant coefficient homogeneous differential operator on $\R^n$, then $D$ is hypoelliptic if and only if it is elliptic.
Replacing $\R^n$ by a simply connected graded nilpotent Lie group $G$, Rockland \cite{RocklandConj} conjectured a necessary and sufficient condition for hypoellipticity of homogeneous left-invariant differential operators on $G$.
This conjecture was proved by Helffer and Nourrigat (sufficiency part) and by Beals (necessity part). 
More precisely, they prove the following
 \begin{theorem}[\cite{BealsRocklandConjNecessary,HelfferRockland}]\label{intro_thm_main}
 Let $G$ be a simply connected graded nilpotent Lie group, $D$ a left-invariant differential operator on $G$ which is homogeneous of degree $k\in \N$ with respect to the grading on $G$. The following are equivalent \begin{enumerate}
 \item The differential operator $D$ is hypoelliptic 
  \item For any irreducibly non-trivial unitary representation of $G$, $\pi(D)$  is injective on smooth vectors.
  \item For any differential operator $D'$ (not necessarily left-invariant) homogeneous of degree $\leq k$, and $K\subseteq G$ compact subset, there exists a constant $C>0$ such that 
      \begin{equation}\label{eqn:maxi_hypo_intro}\begin{aligned}
        \norm{D'(f)}_{L^2(G)}\leq C(\norm{f}_{L^2(G)}+\norm{D(f)}_{L^2(G)}),\quad \forall f\in C^\infty_c(G)\text{ such that } \supp(f)\subseteq K.       
      \end{aligned}\end{equation}
 \end{enumerate}
 %Moreover, if $D$ satisfies the above, and $D'$ is the sum of left-invariant differential operators each of which is homogeneous of degree $<k$, then $D+D'$ is also hypoelliptic.
 \end{theorem}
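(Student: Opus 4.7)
The plan is to deduce \Cref{intro_thm_main} from the abstract maximal hypoellipticity theorem announced in the abstract, by verifying its hypotheses for the natural homogeneous calculus on the graded nilpotent group $G$ and interpreting ``principal symbol left invertible'' in terms of representation theory.

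First, I would set up the relevant calculus. Let $\cA$ be the filtered $*$-algebra generated by the left-invariant vector fields on $G$, graded by the dilations of $G$. A homogeneous left-invariant operator $D$ of degree $k$ has a natural principal symbol which can be identified with $D$ itself in a suitable completion related to the group $C^*$-algebra $C^*(G)$. The abstract theorem will then give the equivalence of maximal hypoellipticity, i.e.~condition (3) (extended to possibly non-invariant $D'$ by a localization argument), with left invertibility of the principal symbol of $D$ in this algebra.

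Next, to match condition (2), I would use that $C^*(G)$ is a Type I $C^*$-algebra, which is classical for simply connected nilpotent Lie groups. For Type I algebras, left invertibility of an element is characterized by injectivity under every irreducible representation. Passing between bounded and unbounded operators, left invertibility of the principal symbol becomes: $\pi(D)$ is injective on smooth vectors for every non-trivial irreducible unitary representation $\pi$ of $G$, which is exactly (2). The trivial representation is excluded because a homogeneous $D$ of degree $k\geq 1$ vanishes there, so one effectively works in a quotient algebra.

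Finally, the classical equivalence (1) $\iff$ (3) would close the argument. The implication (3) $\implies$ (1) is the standard regularity extraction from the maximal estimate using Friedrichs mollifiers adapted to the grading. For (1) $\implies$ (3), if (3) failed one could rescale via group dilations to extract a singular distribution $u$ with $Du$ smooth, contradicting (1). I expect the main obstacle to be verifying the abstract calculus axioms for the homogeneous left-invariant calculus on $G$, and carefully handling unbounded operators and smooth vectors when translating ``left invertible modulo kernel of principal symbol'' into ``$\pi(D)$ injective on smooth vectors'' via the Type I structure.
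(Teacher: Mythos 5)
There is a genuine gap, and it sits exactly at the point you dismiss as ``passing between bounded and unbounded operators.'' Gelfand's criterion (\Cref{prop:simple_arg}) characterizes left invertibility in \emph{any} unital $C^*$-algebra by injectivity of $\pi(a)$ on the \emph{whole} Hilbert space $L^2(\pi)$, for every irreducible $\pi$; it is not a special feature of Type I algebras. Condition (2) of the theorem only gives injectivity on the much smaller space of smooth vectors, and bridging that gap is the entire content of the paper's use of the Type I hypothesis: one runs a Fredholm-index argument along the composition series $(I_\rho)$ of the Type I algebra (\Cref{lem:smallest_ordinal_invertibility}, \Cref{thm:type_I_inv}), shows via the continuity of $k\mapsto v_kav_{-k}$ that the relevant operators are Fredholm of index $0$, concludes that the $L^2$-kernel is finite dimensional and hence contained in some $H^l(\pi)$, and then conjugates by the $v_k$'s to push that kernel into $C^\infty(\pi)=\bigcap_k H^k(\pi)$, where hypothesis (2) kills it. Your proposal contains no mechanism for this step, and without it the argument does not close. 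Relatedly, you identify the wrong $C^*$-algebra: the Type I algebra in play is not $C^*(G)$ but the closure $\overline{\cA_0}$ of the order-zero part of the calculus, which is an extension of a symbol algebra $B$ by $C^*G$; the Type I property of $B$ comes from $G\rtimes\Rpt$ (via a Morita equivalence with $C^*_1G\rtimes\Rpt$ and the Auslander--Kostant/Kirillov theory for that solvable group), not from nilpotency of $G$ alone (\Cref{thm:completion}).

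A second structural problem: the calculus cannot be ``the filtered algebra generated by the left-invariant vector fields.'' The whole method requires elements $v_{-k}$ of strictly negative order so that $v_{-k}D$ has order $0$ and lands in a $C^*$-algebra, and it requires the nontrivial fact (\Cref{thm:u_bounded}, the Knapp--Stein/Cotlar--Stein estimate) that approximately homogeneous convolution kernels of order $\leq 0$ are $L^2$-bounded. So one must work with the kernel classes $\cA_k=\{u\in C^{-\infty}_c(\mathfrak{g}): \alpha_\lambda(u)-\lambda^k u\in C^\infty_c(\mathfrak{g})\}$, not with differential operators. Finally, even granting global maximal hypoellipticity from the abstract theorem, passing to the local estimate \eqref{eqn:maxi_hypo_intro} and to hypoellipticity (1) requires showing that commutators with cutoff functions lower the order by one in this calculus (\Cref{lem:commutator}) and then bootstrapping; this is not supplied by generic Friedrichs mollifier arguments. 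Your sketch of $(1)\Rightarrow(3)$ by rescaling is a reasonable outline of Beals' necessity argument, which the paper cites rather than reproves, so that part is not at issue.
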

Inequality \eqref{eqn:maxi_hypo_intro} is called the maximal hypoellipticity inequality.
The proof of $1\implies 2$ by Beals \cite{BealsRocklandConjNecessary} is short and simple.
On the other hand, the proof of the other implications by Helffer and Nourrigat is much more involved and requires a detailed study of $\pi(D)$ for all unitary representations of $G$ based on Kirillov's orbit method. 
Since Kirillov's orbit method shows that all irreducible unitary representations are induced from one dimensional representations of subgroups of $G$, 
Helffer and Nourrigat's proof proceeds by carefully studying the behavior of $\pi(D)$ and its norm acting on Sobolev spaces as one induces representations from subgroups. 

In this article, we present a $C^*$-algebra proof of Helffer and Nourrigat's result which avoids any detailed study of the structure of the unitary representations of $G$.
Instead, it relies on the fact that the group $G\rtimes \Rpt$ is of Type \RNum{1} where $\Rpt$  acts on $G$ by graded dilations. 
There are many examples of Lie groups of Type I. All connected nilpotent lie groups and real semisimple Lie groups are of Type I, see \cite{Dixmier}.
Solvable Lie groups are not always of Type I. Dixmier \cite{DixmierSolv} showed that a large class of solvable groups are of Type I which includes $G\rtimes \R_+^\times$. 
More generally, a simple sufficient and necessary condition for simply connected solvable Lie groups to be of Type \RNum{1} was given by Auslander and Kostant \cite{AuslanderKostant}.

Our proof is quite general and applies to a broader range of situations than those considered by Helffer and Nourrigat.
We establish an abstract theorem showing that, given a abstract pseudo-differential calculus equipped with a principal symbol and satisfying the following three main assumptions:
\begin{enumerate}
\item Operators of order $\leq 0$ act as bounded operators on suitable Hilbert spaces.
\item The $C^*$-algebra generated by operators of order $0$ is of Type~\RNum{1}.
\item The calculus contains a family of operators $(v_k)_{k\in \R}$ such that $v_k$ is of order $k$, the family $v_k$ varies continuously in $k$, and $v_0$ is invertible.
\end{enumerate}
Then maximal hypoellipticity in this calculus is equivalent to the injectivity of the principal symbol.
In the case of Helffer and Nourrigat's theorem, the first assumption follows from a classical argument using Cotlar and Stein lemma. The second is that $G\rtimes \Rpt$ is of Type \RNum{1}. The third is straightforward.

Another application of our theorem is a resolution of the microlocal Helffer-Nourrigat conjecture which will be presented in a separate article. Our previous work on the Helffer-Nourrigat conjecture \cite{MohsenMaxHypo} is also a special case of our theorem.

Abstract pseudo-differential calculi have been considered by many people, see for example the work of Connes-Moscovici \cite{ConnesMoscoviciLocalIndexFormula}, Higson \cite{HigsonConnesMoscovici} and  Guillemin \cite{GuilleminNewProofWeylFormula}.
The main novelty of our work lies in the use of $C^*$-algebras of Type~I. 
To motivate this approach, we briefly explain why $C^*$-algebras naturally arise in the proof of \MyCref{intro_thm_main}.

Historically, $C^*$-algebras were introduced by Gelfand in order to give a more conceptual proof of Wiener’s theorem~\cite{WienerFourierTransformCircle}. 
Gelfand’s argument relies on the following classical result in the theory of $C^*$-algebras.

\begin{theorem}[Gelfand]\label{thm:}
Let $A$ be a unital $C^*$-algebra. 
An element $a \in A$ is invertible if and only if for every irreducible representation $\pi$ of $A$ acting on a Hilbert space $L^2(\pi)$, the operator
$
\pi(a) : L^2(\pi) \to L^2(\pi)
$
is invertible.
\end{theorem}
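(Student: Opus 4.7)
The plan is to establish the non-trivial direction: assuming $a \in A$ is not invertible, I would produce an irreducible representation $\pi$ of $A$ with $\pi(a)$ not invertible. The converse is immediate, since any unital $*$-homomorphism sends $1 = a a^{-1} = a^{-1} a$ to $1 = \pi(a)\pi(a^{-1}) = \pi(a^{-1})\pi(a)$.

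I would first reduce to a positive non-invertible element. Since $a$ is invertible if and only if both $a^*a$ and $aa^*$ are invertible (invertibility of $a^*a$ yields the left inverse $(a^*a)^{-1}a^*$ and invertibility of $aa^*$ the right inverse $a^*(aa^*)^{-1}$), at least one of them, say $b = a^*a$, is non-invertible. By spectral permanence for $C^*$-algebras, the spectrum of $b$ computed in $A$ agrees with its spectrum computed in the commutative unital $C^*$-subalgebra $B := C^*(1, b)$, so $0 \in \spec(b)$. Gelfand duality identifies $B$ with $C(\spec(b))$, and evaluation $\chi_0$ at the point $0 \in \spec(b)$ defines a character of $B$, which is in particular a pure state.

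Next, I would extend $\chi_0$ to a pure state of $A$. By Hahn--Banach, the set of state extensions of $\chi_0$ to $A$ is a nonempty, convex, weak-$*$-compact subset of the state space of $A$, so by Krein--Milman it admits an extreme point $\phi$. Since $\chi_0$ is multiplicative, and hence pure on $B$, a short convex-analytic argument shows that any such extremal extension $\phi$ is a pure state of $A$: if $\phi = \tfrac{1}{2}(\phi_1 + \phi_2)$, then purity of $\chi_0$ forces $\phi_i|_B = \chi_0$, so both $\phi_i$ lie in the same face and extremality gives $\phi_1 = \phi_2$. Applying the GNS construction to $\phi$ produces an irreducible representation $\pi_\phi$ on a Hilbert space $H_\phi$ with a cyclic unit vector $\xi$, and
$$\|\pi_\phi(a)\xi\|^2 = \langle \pi_\phi(a^*a)\xi, \xi\rangle = \phi(b) = \chi_0(b) = 0,$$
so $\pi_\phi(a)\xi = 0$. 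Hence $\pi_\phi(a)$ is not injective and, in particular, not invertible.

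The main obstacle is the pure-state extension step: the fact that an extremal extension of the character $\chi_0$ is itself a pure state of the ambient algebra $A$ is where the full force of the $C^*$-axioms enters, through the interplay between positivity, the state space, and the correspondence between pure states and irreducible representations via GNS. The other ingredients (spectral permanence, Gelfand duality on $B$, and the reduction to $a^*a$) are essentially formal manipulations.
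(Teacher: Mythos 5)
Your proof is correct. The converse direction, the reduction to $a^*a$ (or $aa^*$), spectral permanence, the character at $0\in\spec(a^*a)$, the extremal pure-state extension, and the GNS computation $\norm{\pi_\phi(a)\xi}^2=\phi(a^*a)=0$ are all sound; the only point you gloss over is the case where $aa^*$ rather than $a^*a$ is the non-invertible one, but that is handled by running the same argument for $a^*$ and observing that $\pi(a)^*$ having nontrivial kernel already rules out invertibility of $\pi(a)$. Your route differs from the paper's. The paper does not prove this theorem directly; it proves the sharper Proposition~\ref{prop:simple_arg} --- $a$ is \emph{left}-invertible if and only if $\pi(a)$ is \emph{injective} for every irreducible $\pi$ --- by taking the closed left ideal $I$ generated by $a$ and invoking Dixmier's result that a proper closed left ideal is contained in the left kernel $\{x:\phi(x^*x)=0\}$ of some pure state $\phi$; the two-sided statement then follows by applying this to both $a$ and $a^*$. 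Your argument instead locates $0$ in the spectrum of $a^*a$ inside the commutative subalgebra $C^*(1,a^*a)$ and extends the corresponding character to a pure state of $A$. The two proofs are close cousins (both end with GNS applied to a pure state annihilating $a^*a$), but the paper's decomposition through left-invertibility and injectivity is the one actually reused later (e.g.\ in Lemma~\ref{lem:step2} and Corollary~\ref{thm:type_I_inv}), whereas your argument proves exactly the stated equivalence and nothing more; on the other hand, yours is self-contained modulo the standard pure-state extension theorem rather than the ideal/pure-state correspondence.
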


Let us now return to the proof of \MyCref{intro_thm_main}. 
The strategy is to embed the operator $D$ into a pseudo-differential calculus and to consider the operator $v_{-k} D$, where $v_{-k}$ is a suitably chosen operator of order $-k$. 
The operator $v_{-k} D$ then has order~$0$.

The goal is to show that $v_{-k} D$ is invertible on $L^2(G)$. 
Once this is established, maximal hypoellipticity estimates follow. 
Let $A$ denote the $C^*$-algebra obtained as the closure of operators of order~$0$. 
Gelfand’s theorem allows us to reduce the invertibility of $v_{-k} D$ to the study of its images under irreducible representations of~$A$. 
In our setting, these representations arise from the non-trivial unitary representations of the group~$G$.

At this point, one may ask where the Type~I assumption enters the argument. 
In Gelfand’s theorem, one requires that $\pi(a)$ be invertible as an operator on the entire Hilbert space $L^2(\pi)$. 
However, in \MyCref{intro_thm_main}, the assumption is weaker: one only requires that $\pi(D)$ be injective on the space of smooth vectors.

The key observation of this article is that, for $C^*$-algebras of Type~I, it is possible to bridge this gap. 
More precisely, the Type~I condition allows one to reduce injectivity on the whole Hilbert space to injectivity on smooth vectors. 
This reduction relies on a counting argument involving the Fredholm index and on the following theorem, proved in \MyCref{thm:type_I_inv}.

\begin{theorem}
Let $A$ be a unital $C^*$-algebra of Type~I. 
An element $a \in A$ is invertible if and only if, for every irreducible representation $\pi$ of~$A$, the implication
\[
\pi(a) \text{ is Fredholm } \;\Longrightarrow\; \pi(a) \text{ is invertible}
\]
holds.
\end{theorem}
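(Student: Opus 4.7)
The $(\Rightarrow)$ direction is trivial. My plan for the converse is to exploit the composition-series structure available for Type I $C^*$-algebras. By Glimm's theorem I fix a transfinite increasing chain of closed two-sided ideals $0 = I_0 \subseteq I_1 \subseteq \cdots \subseteq I_{\alpha_0} = A$ such that each successive quotient $I_{\beta+1}/I_\beta$ is CCR and $I_\lambda = \overline{\bigcup_{\gamma<\lambda} I_\gamma}$ at limits $\lambda$. Let $\beta_0$ denote the smallest ordinal such that the image of $a$ is invertible in $A/I_{\beta_0}$; this set of ordinals is nonempty since $A/I_{\alpha_0} = 0$. The goal is then to show $\beta_0 = 0$, which is exactly invertibility of $a$ in $A$.

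Suppose for contradiction that $\beta_0 > 0$. The key step is the successor case: if $\beta_0 = \gamma + 1$, I take an arbitrary irreducible representation $\sigma$ of $A/I_\gamma$. If $\sigma$ vanishes on $I_{\gamma+1}/I_\gamma$ then it descends to an irreducible representation of $A/I_{\gamma+1}$, and $\sigma(a)$ is invertible because $a$ is invertible in $A/I_{\gamma+1}$. Otherwise, the CCR property of $I_{\gamma+1}/I_\gamma$ forces $\sigma(I_{\gamma+1}/I_\gamma) = K(H_\sigma)$, so the image of $\sigma(a)$ in $B(H_\sigma)/K(H_\sigma)$ agrees with the image of the (invertible) class of $a$ in $A/I_{\gamma+1}$ under the induced representation, and is therefore invertible; in other words $\sigma(a)$ is Fredholm on $H_\sigma$. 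Lifting $\sigma$ to an irreducible representation $\tilde\sigma$ of $A$ trivial on $I_\gamma$, the hypothesis of the theorem yields that $\tilde\sigma(a) = \sigma(a)$ is invertible. Since this holds for every irreducible $\sigma$ of $A/I_\gamma$, Gelfand's theorem applied to $A/I_\gamma$ shows that $a$ is invertible there, contradicting the minimality of $\beta_0$.

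For the limit case, I pick $b \in A$ with $ab - 1, ba - 1 \in I_{\beta_0} = \overline{\bigcup_{\gamma<\beta_0} I_\gamma}$; by density I find $\gamma < \beta_0$ and $u, v \in I_\gamma$ with $\|ab - 1 - u\| < 1/2$ and $\|ba - 1 - v\| < 1/2$. The images of $ab$ and $ba$ in $A/I_\gamma$ then lie within distance $1/2$ of the unit and are therefore invertible, which makes the image of $a$ both left- and right-invertible in $A/I_\gamma$; this again contradicts the minimality of $\beta_0$.

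The main conceptual obstacle is concentrated in the successor step: one has to recognize that the CCR assumption is precisely what promotes the abstract condition ``$\sigma(a)$ invertible in the quotient $A/I_{\gamma+1}$'' to the concrete operator-theoretic property ``$\sigma(a)$ is Fredholm on $H_\sigma$'', so that the hypothesis ``$\pi(a)$ Fredholm $\Rightarrow$ $\pi(a)$ invertible'' can actually be invoked. This is the only place where Type I enters non-trivially; the rest is routine bookkeeping about composition series and the standard perturbation stability of invertibles in Banach algebras.
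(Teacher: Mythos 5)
Your proof is correct and follows essentially the same route as the paper: both arguments fix a Glimm composition series, take the minimal ordinal $\beta_0$ with $a$ invertible in $A/I_{\beta_0}$, rule out the limit case by a density/perturbation argument, and in the successor case use the liminal quotient $I_{\beta_0}/I_{\beta_0-1}$ together with Atkinson's theorem to show every irreducible representation of $A/I_{\beta_0-1}$ sends $a$ to a Fredholm operator, so that the hypothesis plus Gelfand's criterion contradicts minimality. The only cosmetic difference is that the paper isolates the limit-case and Fredholmness steps as a separate proposition and handles the top quotient $A/I_{\alpha-1}$ (where irreducible representations are finite dimensional) explicitly, whereas your convention on the zero ring absorbs that edge case.
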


We hope that this work will draw the attention of researchers in linear PDEs to the importance of $C^*$-algebras of Type~I and to their usefulness in analytic problems of this kind.

%Another examples of applications to our theorem, like the regularity theorem for operators of products of manifolds obtained by Rodino \cite{Rodino}.

We end by remarking that we use very little of the theory of $C^*$-algebras. All we need is contained in \cite[Chapter 1,2,3,4]{Dixmier} and \cite{LanceBook}.
\paragraph*{Acknowledgments}
We thank N. Higson and G. Skandalis for their valuable remarks and suggestions.
This research was supported in part by the ANR project OpART (ANR-
23-CE40-0016).     %We recall that if $M$ is a smooth manifold, then 
    %\begin{itemize}
    %    \item   a sequence $(\phi_n)_{n\in \N}\in C^\infty(M)$ converges to $\phi\in C^\infty(M)$ in the topology $C^\infty(M)$ if 
    %            for any compactly supported differential operator $D$ on $M$, $\norm{D(\phi_n)-D(\phi)}_{C^0(M)}\to 0$
    %    \item   a sequence in $(\phi_n)_{n\in \N}\in C^\infty_c(M)$ converges to $\phi\in C^\infty_c(M)$ in the topology $C^\infty_c(M)$ if $\overline{\cup_{n\in \N}\supp(\phi_n)}$ is a compact subset of $M$, and 
    %            $\phi_n\to \phi$ in the topology $C^\infty(M)$.
    %\end{itemize}

\section{Abstract pseudo-differential calculus and maximal hypoellipticity}\label{sec:Type I}
	%In this chapter, we will prove a regularity theorem for abstract pseudo-differential calculi.
	%A key assumption in our theorem is that the $C^*$-algebra of operators of order $0$ is of Type \RNum{1}.
   % We start by recalling some definitions and results about $C^*$-algebras.
    We follow the following notational conventions: 
    \begin{itemize}
        \item    If $\pi$ is a representation of a $C^*$-algebra $A$ on a Hilbert space, then we denote the Hilbert space by $L^2(\pi)$.
        \item If $V,W$ are topological vector spaces, then $\cL(V,W)$ denotes the space of continuous linear maps from $V$ to $W$. If $V=W$, then $\cL(V):=\cL(V,V)$. 
        \item If $H$ is a Hilbert space, then $\cK(H)$ denotes the $C^*$-algebra of compact operators on $H$.
        \item Sesquilinear forms $\langle \xi,\eta\rangle$ are linear in $\eta$ and anti-linear in $\xi$.
    \end{itemize}
	\begin{prop}[Gelfand]\label{prop:simple_arg} Let $A$ be a unital $C^*$-algebra. %\begin{enumerate}
			  An element $a\in A$ is left invertible if and only if $\pi(a):L^2(\pi)\to L^2(\pi)$ is injective for every irreducible representation $\pi$ of $A$.
			%\item   An element $a\in A$ is invertible if and only if $\pi(a)$ is injective with dense image for every irreducible representation $\pi$ of $A$.
	%	\end{enumerate}
	\end{prop}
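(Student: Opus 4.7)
The direction $(\Rightarrow)$ is immediate: if $ba=1$ in $A$, then applying any representation $\pi$ gives $\pi(b)\pi(a) = I_{L^2(\pi)}$, so $\pi(a)$ is injective (it has a left inverse as an operator).

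For $(\Leftarrow)$ I would argue the contrapositive: assuming $a$ is not left invertible, I will construct an irreducible representation $\pi$ with $\ker \pi(a)\neq 0$. The first step is the reduction
\[ a\text{ left invertible in }A \;\iff\; a^*a\text{ invertible in }A. \]
One direction is the explicit formula $(a^*a)^{-1}a^*\cdot a = 1$; the other follows because $ba=1$ gives $\|\pi(a)\xi\|\geq \|b\|^{-1}\|\xi\|$ in any faithful representation, and hence $\pi(a^*a)\geq \|b\|^{-2}I$, forcing $a^*a$ to be invertible. So the working hypothesis becomes $0\in \spec_A(a^*a)$.

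Since $a^*a$ is a positive element with $0$ in its spectrum, the numerical range identity $\inf_{\phi}\phi(a^*a) = \min\spec(a^*a) = 0$ (with infimum over all states of $A$) shows that the set
\[ S = \{\phi\in\text{states}(A) : \phi(a^*a) = 0\} \]
is non-empty. The set $S$ is weak-$*$ closed and convex, and the positivity of $a^*a$ makes it a face of the state space of $A$: if $\phi = t\phi_1+(1-t)\phi_2\in S$ with $t\in(0,1)$ and $\phi_i$ states, then $t\phi_1(a^*a)+(1-t)\phi_2(a^*a)=0$ with non-negative summands forces each $\phi_i(a^*a)=0$. By Krein--Milman applied to the weak-$*$ compact convex set $S$, and using that an extreme point of a face of the state space is extreme in the whole state space, I obtain a pure state $\phi_0\in S$.

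The GNS construction applied to $\phi_0$ then produces an irreducible representation $\pi_{\phi_0}$ with a cyclic unit vector $\xi_0$ satisfying $\phi_0(b) = \langle \xi_0, \pi_{\phi_0}(b)\xi_0\rangle$. A direct computation yields
\[ \|\pi_{\phi_0}(a)\xi_0\|^2 = \langle \xi_0, \pi_{\phi_0}(a^*a)\xi_0\rangle = \phi_0(a^*a) = 0, \]
so $\xi_0\in \ker\pi_{\phi_0}(a)\setminus\{0\}$, contradicting the injectivity hypothesis. The only delicate point in the whole argument is verifying the face property of $S$ and correctly invoking Krein--Milman to land on a \emph{pure} state (so that GNS produces an irreducible representation rather than merely a cyclic one); the remaining steps are standard manipulations with positive elements, numerical ranges, and the GNS construction.
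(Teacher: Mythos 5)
Your proof is correct and follows essentially the same route as the paper: both arguments reduce to producing a pure state $\phi$ with $\phi(a^*a)=0$ and then observing that the GNS cyclic vector lies in $\ker\pi_\phi(a)$. The only difference is in how that pure state is obtained — the paper cites Dixmier's theorem on closed left ideals (applied to the left ideal generated by $a$), whereas you reprove the needed existence directly via the numerical-range identity for the positive element $a^*a$ together with a Krein--Milman argument on the closed face $\{\phi:\phi(a^*a)=0\}$; this is a self-contained but equivalent path, and your preliminary reduction to invertibility of $a^*a$ (only the easy direction of which is actually needed) is also sound.
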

	\begin{proof}
		%The second statement follows from the first by considering $a$ and $a^*$. We now prove the first.
		The implication $\implies$ is clear. For the other implication, let $I\subseteq A$ be the closed left-ideal generated by $a$. If $a$ isn't left-invertible, then $I\neq A$.
		So, by \cite[Theorem 2.9.5]{Dixmier}, there exists a pure state $\phi:A\to \C$ such that $I\subseteq \{x\in A:\phi(x^*x)=0\}$.
		Hence, if $\pi$ is the irreducible representation obtained from $\phi$ by the GNS construction and $\xi$ is the distinguished vector, then $\pi(a)\xi=0$.
	\end{proof}
    \begin{prop}\label{prop:path_inv}
                If $(a_t)_{t\in [0,1]}$ is a continuous path in a unital $C^*$-algebra $A$ such that $a_0$ is invertible and $a_t$ are left-invertible for all $t\in [0,1]$, then $a_t$ are invertible for all $t\in [0,1]$.
    \end{prop}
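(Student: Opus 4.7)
The plan is to attach to each $a_t$ a continuous family of projections $p_t \in A$ that vanishes precisely when $a_t$ is invertible, and then to invoke the well-known rigidity of the zero projection: any projection of norm strictly less than one must be zero.

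The first step is to observe that left invertibility of $a_t$ is equivalent to invertibility of the positive element $a_t^* a_t$. Indeed, if $b_t a_t = 1$ then $a_t^* b_t^* b_t a_t = 1$, so $a_t^* a_t$ is left invertible, and being self-adjoint it is invertible. The second step is to define
\[
p_t := 1 - a_t (a_t^* a_t)^{-1} a_t^*.
\]
A direct computation gives $p_t^2 = p_t = p_t^*$, so $p_t$ is a projection in $A$. Since $t \mapsto a_t^* a_t$ is continuous and takes values in the open set of invertibles, $t \mapsto (a_t^* a_t)^{-1}$ is continuous (inversion is continuous on invertibles in any Banach algebra), hence so is $t \mapsto p_t$. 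The third step is the equivalence $p_t = 0 \Leftrightarrow a_t$ is invertible. Indeed, $p_t = 0$ means $(a_t^* a_t)^{-1} a_t^*$ is a right inverse of $a_t$; combined with the hypothesis that $a_t$ is left invertible, this forces $a_t$ to be invertible. Conversely, if $a_t^{-1}$ exists then $a_t(a_t^*a_t)^{-1}a_t^* = a_t \cdot a_t^{-1}(a_t^*)^{-1} \cdot a_t^* = 1$.

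The final step is a connectedness argument. Let $T = \{t \in [0,1] : p_t = 0\}$. Since $p_t$ is continuous in $t$, $T$ is closed. Since every nonzero projection in a $C^*$-algebra has norm one, if $p_t = 0$ and $\|p_s - p_t\| < 1$ then $\|p_s\| < 1$ forces $p_s = 0$; thus $T$ is open. As $0 \in T$ by the hypothesis that $a_0$ is invertible, and $[0,1]$ is connected, $T = [0,1]$, giving the conclusion.

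There is no real obstacle here; the only point requiring some care is the continuity of $t \mapsto (a_t^* a_t)^{-1}$, which is automatic once left invertibility is reformulated as invertibility of $a_t^* a_t$. The whole argument can alternatively be phrased by noting that projections in $A$ form a disjoint union of clopen $K$-theory components, so any continuous path of projections starting at $0$ stays at $0$.
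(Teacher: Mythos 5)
Your proof is correct and is essentially the paper's own argument: both reduce left invertibility of $a_t$ to invertibility of $a_t^*a_t$, form the continuous family of projections $a_t(a_t^*a_t)^{-1}a_t^*$ (you take its complement $p_t$), and conclude by the clopen/rigidity argument for projections, which the paper leaves implicit. The one imprecise step is your justification that $a_t^*a_t$ is invertible: the identity $a_t^*b_t^*b_ta_t=1$ does not literally exhibit a left inverse of $a_t^*a_t$; the clean route is $1=a_t^*b_t^*b_ta_t\leq \|b_t\|^2\, a_t^*a_t$, so $a_t^*a_t\geq \|b_t\|^{-2}$ is positive and invertible (this is the content of the paper's remark about embedding $A$ in $\cL(H)$).
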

    \begin{proof}
        Left invertibility of $a_t$ is equivalent to invertibility of $a_t^*a_t$. This can be seen by embedding $A$ in $\cL(H)$ for some Hilbert space $H$. 
       The element $a_t (a_t^*a_t)^{-1}a_t^*$ is a projection which depends continuously on $t$, and is equal to $1$ at $t=0$ because $a_0$ is invertible.
      So, it is equal to $1$ for all $t\in [0,1]$. Hence, $a_t$ is right-invertible whose inverse is $(a_t^*a_t)^{-1}a_t^*$.
    \end{proof}

	\begin{dfn}[Kaplansky, Glimm and Dixmier] A $C^*$-algebra $A$ is called \begin{itemize}
			\item Liminal if for every irreducible representation  $\pi$ of $A$, $\cK(L^2(\pi))=\pi(A)$.
			\item Type I if for every irreducible representation $\pi$ of $A$, $\cK(L^2(\pi))\subseteq \pi(A)$.
		\end{itemize}
	\end{dfn}
	We refer the reader to \cite[Chapters 4 and 9]{Dixmier} for a detailed discussion of liminal and $C^*$-algebras of Type I.

    We remark that if $A$ is a $C^*$-algebra and $\pi$ an irreducible representation such that $K(L^2(\pi))\cap \pi(A)\neq \{0\}$, then $K(L^2(\pi))\subseteq \pi(A)$, see \cite[Corollary 4.1.10]{Dixmier}.

    Let $A$ be a unital $C^*$-algbera of Type I.
    	By \cite[Proposition 4.3.3 and 4.3.4]{Dixmier}, there exists an ordinal $\alpha$ and a family of closed two-sided ideals $(I_{\rho})_{0\leq \rho\leq \alpha}$ of $A$ such that
		\begin{itemize}
			\item   $I_0=0$ and $I_\alpha=A$
			\item   If $\rho\leq \alpha$ is a limit ordinal, then $I_\rho=\overline{\bigcup_{\rho'<\rho}I_{\rho'}}$
			\item   If $\rho<\alpha$, then $I_\rho\subseteq I_{\rho+1}$ and $I_{\rho+1}/I_{\rho}$ is a non-zero liminal $C^*$-algebra.
		\end{itemize}
        \begin{rem}\label{rem:}
            The $C^*$-algebras that we will use in \MyCref{sec:Helffer_Nourrigat}, i.e., $C^*(G\rtimes\Rpt)$, the ordinal $\alpha$ is actually finite.
        \end{rem}

        Let $a\in A$.
        We define $\mathrm{ord}(a)<\alpha$ to be the smallest ordinal $\beta<\alpha$ such that $a$ is invertible in $A/I_{\beta}$. 
        If $a$ isn't invertible in any $A/I_{\beta}$, then $\mathrm{ord}(a):=\alpha$.
		Here, we follow the convention that $0$ isn't invertible in the zero ring, so nothing is invertible in $A/I_{\alpha}=0$.
        
        %We will prove invertability of various elements of $A$ by taking the minimal ordinal $\beta$ such that 
        %If $b\in \ove$
        %Since $A$ is unital, $\alpha$ is a successor ordinal.
	    %By a similar argument, $v_kav_{-k}$ is invertible in $A/I_{\alpha-1}$ for any $k\in \R^\nu$.
			\begin{prop}\label{lem:smallest_ordinal_invertibility}
			  If $a\in A$, then $\mathrm{ord}(a)$ is equal to $0$ or it is a successor ordinal. Furthermore, if $\mathrm{ord}(a)\neq 0$, then for any irreducible representation of $A/I_{\mathrm{ord}(a)-1}$, $\pi(a):L^2(\pi)\to L^2(\pi)$ is a Fredholm operator, where $\mathrm{ord}(a)-1$ is the predecessor of $\mathrm{ord}(a)$.
		\end{prop}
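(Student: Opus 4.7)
The plan is to handle the two assertions separately, both using the transfinite tower $(I_\rho)_{0 \le \rho \le \alpha}$ described in the setup. For the first, I will argue by contradiction that $\mathrm{ord}(a)$ cannot be a strictly positive limit ordinal. Suppose $\rho := \mathrm{ord}(a) > 0$ is a limit ordinal; then $a$ is invertible in $A/I_\rho$. Lifting the inverse to some $b \in A$, both $ab - 1$ and $ba - 1$ lie in $I_\rho = \overline{\bigcup_{\beta<\rho} I_\beta}$, so for some $\beta < \rho$ one can find elements of $I_\beta$ within distance $1/2$ of each of $ab - 1$ and $ba - 1$ (taking the maximum of the two ordinals produced by approximating each term separately). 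In $A/I_\beta$ this makes both $ab$ and $ba$ within distance $1/2$ of $1$, so they are invertible via Neumann series. Hence $a$ is both left- and right-invertible in $A/I_\beta$, contradicting the minimality of $\rho$.

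For the second assertion, write $\mathrm{ord}(a) = \beta + 1$ and set $B := A/I_\beta$ and $J := I_{\beta+1}/I_\beta$. By the construction of the tower, $J$ is a non-zero liminal closed two-sided ideal of $B$, and the image of $a$ in $B/J$ is invertible. Given an irreducible representation $\pi$ of $B$ on $L^2(\pi)$, I will invoke the standard dichotomy: either $\pi(J) = 0$, in which case $\pi$ factors through $B/J$ and $\pi(a)$ is outright invertible (a fortiori Fredholm); or $\pi|_J$ is itself an irreducible representation of $J$ on $L^2(\pi)$, in which case the liminality of $J$ forces $\pi(J) = \cK(L^2(\pi))$. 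In the latter case, lifting an inverse of $a$ modulo $J$ to some $b \in B$ and applying $\pi$ yields a two-sided parametrix $\pi(b)$ for $\pi(a)$ modulo $\cK(L^2(\pi))$, so $\pi(a)$ is Fredholm by Atkinson's theorem.

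I do not anticipate a serious obstacle. The first part is a routine Neumann-series argument combined with the definition of $I_\rho$ as a closure; the only small care needed is to handle the approximations of $ab-1$ and $ba-1$ simultaneously, which is accomplished by taking the maximum of two ordinals. The second part is immediate once one invokes two standard structural facts recorded in Dixmier: the dichotomy for restrictions of irreducible representations to closed ideals, and the equality $\pi(J) = \cK(L^2(\pi))$ whenever $J$ is liminal and $\pi|_J$ is a non-zero irreducible representation.
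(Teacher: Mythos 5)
Your argument is essentially the paper's own: the same Neumann-series/approximation step to rule out limit ordinals, and the same dichotomy (via the restriction-to-ideals lemma and liminality) to get Fredholmness. Both halves are correct as far as they go. However, there is one case your proof does not cover: $\mathrm{ord}(a)=\alpha$, i.e.\ the case where $a$ is not invertible in \emph{any} quotient $A/I_\beta$ (this happens, e.g., for $a=0$, and under the paper's convention nothing is invertible in the zero ring $A/I_\alpha$). Your first paragraph opens with ``then $a$ is invertible in $A/I_\rho$,'' which is false when $\rho=\alpha$, so the contradiction argument never gets started there; and your second paragraph's reduction to $B/J$ likewise breaks down since $B/J=0$. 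To close this, you need the separate observation that $\alpha$ itself is a successor ordinal because $A$ is unital (if $\alpha$ were a limit, $1\in\overline{\bigcup_{\rho<\alpha}I_\rho}$ would force some proper ideal $I_\rho$ to contain an element at distance $<1$ from $1$, hence to be all of $A$), and that $A/I_{\alpha-1}$ is then a \emph{unital} liminal $C^*$-algebra, so all its irreducible representations are finite-dimensional and every operator $\pi(a)$ is automatically Fredholm. This edge case is not decorative: the subsequent corollary on invertibility in Type~I algebras runs the ``successor ordinal'' dichotomy against $\mathrm{ord}(a)=\alpha$ as well.
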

		\begin{proof}
            Since $A$ is unital, $\alpha$ is a successor ordinal. 
            Let $\alpha-1$ be its predecessor.
		    So, $A/I_{\alpha-1}$ is a unital liminal $C^*$-algebra.
		    Hence, irreducible representations of $A/I_{\alpha-1}$ are finite dimensional.
            This prove the lemma if $\mathrm{ord}(a)=\alpha$.
            We now suppose that $\mathrm{ord}(a)<\alpha$.
            By hypothesis, there exist $b\in A$ and $c\in I_{\mathrm{ord}(a)}$ such that $ba=1+c$.
					If $\mathrm{ord}(a)$ is a limit ordinal, then there exists an ordinal $\beta<\mathrm{ord}(a)$ and $c'\in I_{\beta}$ such that $\norm{c-c'}<1$.
					Let $x=(1+c-c')^{-1}\in A$. So,
							$xba=1+xc'$.
					Since $xc'\in I_{\beta}$, it follows that $a$ is left invertible in $A/I_{\beta}$. By a similar argument, we deduce that $a$ is right invertible in $A/I_{\beta'}$ for some $\beta'<\mathrm{ord}(a)$.
					Hence, $b$ is invertible in $A/I_{\max(\beta,\beta')}$. We have thus obtained a contradiction to the minimality of $\mathrm{ord}(a)$.
               So, $\mathrm{ord}(a)$ is a successor ordinal.     
              Let $\pi$ be an irreducible representation of $A/I_{\mathrm{ord}(a)-1}$. There are two cases to consider
					\begin{itemize}
						\item   If $\pi$ vanishes on $I_{\mathrm{ord}(a)}/I_{\mathrm{ord}(a)-1}$, then $\pi$ descends to an irreducible representation of $A/I_{\mathrm{ord}(a)}$. Therefore, $\pi(a)$ is invertible and hence Fredholm.
						\item   If $\pi$ doesn't vanish on $I_{\mathrm{ord}(a)}/I_{\mathrm{ord}(a)-1}$, then the restriction of $\pi$ to $I_{\mathrm{ord}(a)}/I_{\mathrm{ord}(a)-1}$ is an irreducible representation of $I_{\mathrm{ord}(a)}/I_{\mathrm{ord}(a)-1}$, see \cite[Lemma 2.11.3]{Dixmier}.
							Since $I_{\mathrm{ord}(a)}/I_{\mathrm{ord}(a)-1}$ is liminal, we deduce that $\pi(I_{\mathrm{ord}(a)}/I_{\mathrm{ord}(a)-1})$ consists of compact operators.
							Invertibility of $a$ in $A/I_{\mathrm{ord}(a)}$ implies that $a$ is invertible in $A/I_{\mathrm{ord}(a)-1}$ modulo $I_{\mathrm{ord}(a)}/I_{\mathrm{ord}(a)-1}$.
							Therefore, $\pi(a)$ is invertible modulo compact operators.
							By Atkinson's theorem, $\pi(a)$ is Fredholm.\qedhere
					\end{itemize}
		\end{proof}
        \begin{cor}\label{thm:type_I_inv}
                Let $A$ be a unital $C^*$-algebra of Type I. An element $a\in A$ is invertible if and only if for every irreducible representation $\pi$ of $A$, the implication
\[
\pi(a) \text{ is Fredholm } \;\Longrightarrow\; \pi(a) \text{ is invertible}
\]
holds.
        \end{cor}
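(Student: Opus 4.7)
The forward direction is immediate: if $a$ is invertible in $A$, then $\pi(a)$ is invertible for every representation, so the stated implication holds vacuously in the sense that its hypothesis being true forces its conclusion.

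For the nontrivial direction, the plan is to argue by contradiction using \MyCref{lem:smallest_ordinal_invertibility}. Assume $a \in A$ satisfies the Fredholm-implies-invertible hypothesis but is not invertible in $A$. Non-invertibility means precisely that $\mathrm{ord}(a) \neq 0$. By \MyCref{lem:smallest_ordinal_invertibility}, $\mathrm{ord}(a)$ is then a successor ordinal, and for every irreducible representation $\pi$ of $A/I_{\mathrm{ord}(a)-1}$ (equivalently, every irreducible representation of $A$ vanishing on $I_{\mathrm{ord}(a)-1}$), the operator $\pi(a)$ is Fredholm on $L^2(\pi)$.

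Now I would invoke the hypothesis: since every such $\pi(a)$ is Fredholm, it must be invertible. The goal is to deduce from this that $a$ is invertible in the quotient $A/I_{\mathrm{ord}(a)-1}$, which contradicts the minimality of $\mathrm{ord}(a)$. To extract both-sided invertibility in the quotient from invertibility of $\pi(a)$ on every irreducible representation, I would apply \MyCref{prop:simple_arg} (Gelfand) twice in the unital $C^*$-algebra $A/I_{\mathrm{ord}(a)-1}$: once to $a$ itself, noting that $\pi(a)$ invertible implies injective, which yields left-invertibility; and once to $a^*$, noting that $\pi(a^*) = \pi(a)^*$ is also invertible hence injective, which yields left-invertibility of $a^*$, i.e.\ right-invertibility of $a$. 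Combining both gives invertibility of $a$ in $A/I_{\mathrm{ord}(a)-1}$, producing the desired contradiction.

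The argument is essentially a packaging of \MyCref{lem:smallest_ordinal_invertibility} together with Gelfand's criterion, so there is no real obstacle; the only subtlety is to remember that Gelfand's criterion as stated gives only left-invertibility, and so one must apply it to both $a$ and $a^*$ to obtain genuine invertibility in the quotient. Everything else is bookkeeping about ordinals and quotients, all of which is already contained in the cited proposition.
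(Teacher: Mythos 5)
Your proposal is correct and follows the same route as the paper: reduce to the quotient $A/I_{\mathrm{ord}(a)-1}$ via \MyCref{lem:smallest_ordinal_invertibility}, use the Fredholm-implies-invertible hypothesis there, and conclude by Gelfand's criterion (\MyCref{prop:simple_arg}) to contradict the minimality of $\mathrm{ord}(a)$. Your explicit double application of Gelfand to $a$ and $a^*$ to upgrade left-invertibility to invertibility is a detail the paper leaves implicit, and it is handled correctly.
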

        \begin{proof}
            The forward implication is trivial. Suppose $a\in A$ such that if $\pi(a)$ is Fredholm, then $\pi(a)$ is invertible.
            If $\mathrm{ord}(a)$ is a successor ordinal, then by \MyCref{lem:smallest_ordinal_invertibility} and \MyCref{prop:simple_arg}, $a$ is invertible in $A/I_{\mathrm{ord}(a)-1}$ which contradicts minimality of $\mathrm{ord}(a)$.
            Hence, $\mathrm{ord}(a)=0$ which means $a$ is invertible in $A$.
        \end{proof}

    %The advantage of $C^*$-algebras of Type I is that if one wants to prove invertibility, one can suppose Fredholmness.
    %More precisely, we prove the following:
    %\begin{theorem}\label{thm:}
     %   Let $A$ be a unital $C^*$-algbera of Type I, $a\in A$.
     %   Suppose that for any irreducible representation $\pi$ of $A$, if $\pi(a)$ is Fredholm, then $\pi(a)$ is injective.
     %   Then, $a$ is left-invertible. 

      %  More generally, suppose one has a connected topological space $X$, and a continuous family $(a_x)_{x\in X}\subseteq A$.
      %  Suppose that the following holds:
      %  \begin{itemize}
      %      \item For any irreducible representation $\pi$ of $A$, if $\pi(a_x)$ is Fredholm for all $x\in X$, then $\pi(a_x)$ is injective for all $x\in X$.
      %      \item There is a distinguished point $x_0\in X$ such that for any irreducible representation $\pi$ of $A$, if $\pi(a_{x_0})$ is injective, then $\pi(a_x)$ is injective for all $x\in X$.
      %  \end{itemize}
      %  Then, $a_x$ is left-invertible for all $x\in X$.
    %\end{theorem}

	%\begin{rem}\label{rem:}
	%   
	%\end{rem}
	%The $C^*$-algebra of a simply connected nilpotent Lie group is liminal, see \cite{Dixmier}.
	%The $C^*$-algebra of simply connected nilpotent Lie groups are liminal. The closure of the algebra of classical pseudo-differential operators of order $0$ is of Type \RNum{1}.
	\paragraph{Abstract $C^*$-calculus:}
	%We will present now our definition of an abstract pseudo-differential calculus.
    Let $\nu\geq 1$, a constant which plays the role of the number of multi-parametrs in the calculus.
    We follow the following conventions:
    \begin{itemize}
        \item        If $k,l\in \R^\nu$, then we write $k\leq l$ if $k_i\leq l_i$ for all $i\in \bb{1,\nu}$. 
        \item     If $k\in \C^\nu$, then $\Re(k):=(\Re(k_1),\cdots,\Re(k_\nu))\in \R^\nu$.
    \end{itemize}
    %To define, an abstract pseudo-differential calculus, we will require a few definitions:
    \begin{dfn}
        A $C^*$-calculus (with $\nu$-parameters) is a family of $\C$-vector spaces $(\cA_{k})_{k\in \C^\nu}$ such that for each $k,l\in \C^\nu$, we have a product map $\cA_k\times \cA_l\to \cA_{k+l}$ which is bilinear and associative, i.e., $(ab)c=a(bc)$ if $a\in \cA_{k_1}$, $b\in \cA_{k_2}$, $c\in \cA_{k_3}$.
				There is a unit element $0\neq 1\in \cA_0$, i.e., 1a$=a1=a$ for all $a\in \cA_k$.
                We also have an involution map $a\in \cA_k\mapsto a^*\in \cA_{\bar{k}}$ which is anti-linear and satisfies $1^*=1$ and $(ab)^*=b^*a^*$ for all $a\in \cA_{k_1}$, $b\in \cA_{k_2}$.
                We also have a $C^*$-semi-norm $\norm{\cdot}_{\overline{\cA_0}}$ on $\cA_0$, i.e., a semi-norm such that $\norm{ab}_{\overline{\cA_0}}\leq \norm{a}_{\overline{\cA_0}}\norm{b}_{\overline{\cA_0}}$ and $\norm{a^*a}_{\overline{\cA_0}}=\norm{a}^2_{\overline{\cA_0}}$ for all $a,b\in \cA_0$.
                We denote by $\overline{\cA_0}$ the Hausdorff completion of $\cA_0$.  
    \end{dfn}
    I don't suppose that I have an addition law of elements of $\cA_k$ and $\cA_l$ if $k\neq l$.
    For our applications, there is no need to suppose that. 
    Also, the sum of classical pseudo-differential operators on a smooth manifold of order $k\in \C$ and $l\in \C$ is a classical pseudo-differential operator if and only if $k-l\in \Z$.
    In fact in the example considered in \MyCref{sec:Helffer_Nourrigat}, see \eqref{eqn:homog_cond_ek}, the sum of element in $\cA_{k_1}$ and $\cA_{k_2}$ won't be in $\cA_{l}$ for any $l\in \C^\nu$.
    \begin{dfn}\label{dfn:unitary_representation_abstract}
        A representation of a $C^*$-calculus $(\cA_k)_{k\in \C^\nu}$ consists of a non-zero $\C$-vector space $C^\infty(\pi)$ equipped with a positive definite sesquilinear form $\langle \cdot,\cdot\rangle:C^\infty(\pi)\times C^\infty(\pi)\to \C$, 
        and a family of linear maps $\pi_k:\cA_k\to \mathrm{End}(C^\infty(\pi))$ for every $k\in \C^\nu$ such that the following hold:
		\begin{enumerate}
			\item   If $k,l\in \C^\nu$, $a\in\cA_k$, $b\in \cA_l$, $\xi,\eta\in C^\infty(\pi)$, then
                    \begin{equation*}\begin{aligned}
                     \pi_0(1)=\mathrm{Id}_{\C^\infty(\pi)},\quad  \pi_{k}(a)\pi_{l}(b)=\pi_{k+l}(ab),\quad   \langle  \pi_k(a)\xi,\eta\rangle=\langle \xi,\pi_{\bar{k}}(a^*)\eta\rangle.
                    \end{aligned}\end{equation*}
                
            \item If $k\in \C^\nu$ such that $\Re(k)\leq 0$ and $a\in \cA_{k}$, then $\pi_{k}(a)$ extends to a bounded operator $L^2(\pi)\to L^2(\pi)$, where $L^2(\pi)$ is the Hilbert space completion of $C^\infty(\pi)$ with respect to the norm $\norm{\xi}_{L^2(\pi)}^2:=\langle \xi,\xi\rangle$.
                    We also suppose that if $a\in \cA_0$, then 
                        \begin{equation}\label{eqn:bound_norm_rep}\begin{aligned}
                            \norm{\pi_0(a)}_{\cL(L^2(\pi))}\leq \norm{a}_{\overline{\cA_0}}.
                        \end{aligned}\end{equation}
                    So, $\pi_0$ extends to a representation of $\overline{\cA_0}$.
			\item The space $C^\infty(\pi)$ equipped with the topology generated by the family of semi-norms $\norm{\pi_k(a)\xi}_{L^2(\pi)}$ for $k\in \C^\nu$ and $a\in \cA_k$ is complete.
                    \end{enumerate}
            The representation is called irreducible if it is irreducible as a representation of the $C^*$-algebra $\overline{\cA_0}$, i.e., if $L\subseteq L^2(\pi)$ is a closed subspace which is invariant by $\pi_0(\cA_0)$, then $L=0$ or $L=L^2(\pi)$.
       % The set $\Sigma(\cA)$ denotes the class of all representations.
        %We say that a representation is irreducible
    \end{dfn}
            We denote by $C^{-\infty}(\pi)$ the space of continuous anti-linear functionals on $C^\infty(\pi)$. 
        The action of $\xi\in C^{-\infty}(\pi)$ on $\eta\in C^\infty(\pi)$ is denoted by $\langle \eta,\xi\rangle$.
        The space $C^{-\infty}(\pi)$ is equipped with the topology generated by the semi-norms $\xi\mapsto |\langle \xi,\eta\rangle|$ for every $\eta\in C^\infty(\pi)$.
		We have obvious continuous linear inclusions 
            \begin{equation*}\begin{aligned}
                C^\infty(\pi)\subseteq L^2(\pi)\subseteq C^{-\infty}(\pi).
            \end{aligned}\end{equation*}
		For each $k\in \C^\nu$, we extend $\pi_k$ to a linear map $\pi_k:\cA_k\to \cL(C^{-\infty}(\pi))$ by the formula 
            \begin{equation*}\begin{aligned}
                \langle \eta,\pi_k(a)\xi\rangle:=\langle \pi_{\bar{k}}(a^*)\eta,\xi \rangle,\quad \forall a\in \cA_k,\xi\in C^{-\infty}(\pi),\eta\in C^{\infty}(\pi).        
            \end{aligned}\end{equation*}

            There are various ways to define Sobolev spaces associated to the representation $\pi$.
            We will use here the most restrictive one. Our main theorem implies that they coincide with the least restrictive one, see \eqref{eqn:HkPi_alternate}.
		Let $H^k(\pi)$ be the set of $\xi\in C^{-\infty}(\pi)$ such that 
			there exists a sequence $(\xi_n)_{n\in \N}\subseteq C^\infty(\pi)$ such that $\xi_n\to \xi$ in $C^{-\infty}(\pi)$, and for any $l\in \C^\nu$ and $a \in\cA_l$ such that $\Re(l)\leq k$, one has $\pi_l(a)\xi\in L^2(\pi)$, and $\pi_l(a)\xi_n\to \pi_l(a)\xi$ in $L^2(\pi)$.
			We equip $H^k(\pi)$ with the topology generated by the family of semi-norms $\xi\mapsto \norm{\pi_l(a)\xi}_{L^2(\pi)}$ for all $l\in \C^\nu$ and $a\in \cA_l$ such that $\Re(l)\leq k$.
            Clearly, 
                \begin{equation}\label{eqn:inclusion_sobolev}\begin{aligned}
                    H^0(M)=L^2(\pi),\quad H^l(\pi)\subseteq H^{k}(\pi),\quad \text{if } k\leq l,
                \end{aligned}\end{equation}
            and if $k\in \C^\nu$, $l\in \R^\nu$ and $a\in \cA_k$, then 
                \begin{equation*}\begin{aligned}
                    \pi_k(a)(H^l(\pi))\subseteq H^{l-\Re(k)}(\pi)\text{, and the map }H^l(\pi)\xrightarrow{\pi_k(a)}  H^{l-\Re(k)}(\pi)\text{ is continuous.}
                \end{aligned}\end{equation*}
             The spaces $C^\infty(\pi)$, $C^{-\infty}(\pi)$, $H^k(\pi)$ are called the space of smooth vectors, distribution vectors and Sobolev spaces associated to the representation $\pi$ respectively.
            \begin{prop}One has
                \begin{equation}\label{eqn:Cinfinty_intersection}\begin{aligned}
                    C^\infty(\pi)=\bigcap_{k\in \R^\nu_+}H^k(\pi).
                \end{aligned}\end{equation}
            \end{prop}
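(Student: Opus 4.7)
The forward inclusion $C^\infty(\pi) \subseteq \bigcap_k H^k(\pi)$ is immediate: for $\xi \in C^\infty(\pi)$ the constant sequence $\xi_n \equiv \xi$ realizes the approximation condition defining $H^k(\pi)$ for every $k$, since each $\pi_l(a)\xi$ already lies in $C^\infty(\pi) \subseteq L^2(\pi)$. For the reverse inclusion, suppose $\xi \in \bigcap_{k \in \R^\nu_+} H^k(\pi)$; for any $l \in \C^\nu$ we may choose $k \in \R^\nu_+$ with $k \geq \Re(l)$, so $\pi_l(a)\xi \in L^2(\pi)$ is well defined for every $a \in \cA_l$. The plan is to construct a Cauchy filter on $C^\infty(\pi)$ that simultaneously approximates $\xi$ in every semi-norm $p_{l,a}(\cdot) = \norm{\pi_l(a)(\cdot)}_{L^2(\pi)}$, apply completeness of $C^\infty(\pi)$ to extract a limit $\eta \in C^\infty(\pi)$, and then identify $\eta$ with $\xi$.

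For each triple $(k, S, \epsilon)$, where $k \in \R^\nu_+$, $\epsilon > 0$, and $S$ is a finite collection of pairs $(l,a)$ with $a \in \cA_l$ and $\Re(l) \leq k$, set
\[
  A_{k,S,\epsilon} = \bigl\{\eta \in C^\infty(\pi) : \norm{\pi_l(a)\eta - \pi_l(a)\xi}_{L^2(\pi)} < \epsilon \text{ for all } (l,a) \in S\bigr\}.
\]
Non-emptiness of each $A_{k,S,\epsilon}$ follows from $\xi \in H^k(\pi)$ applied to the finite set $S$ (pick $n$ large in the approximating sequence), and stability under finite intersections is clear by passing to $(k_1+k_2, S_1\cup S_2, \min(\epsilon_1, \epsilon_2))$, so $\{A_{k,S,\epsilon}\}$ is a filter base on $C^\infty(\pi)$. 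It is Cauchy: for the semi-norm $p_{l_0, a_0}$ and tolerance $\epsilon$, the set $A_{k, \{(l_0, a_0)\}, \epsilon/2}$ with any $k \geq \Re(l_0)$ has $p_{l_0, a_0}$-diameter $< \epsilon$ by the triangle inequality. Completeness (condition~3 of \MyCref{dfn:unitary_representation_abstract}) then yields a limit $\eta \in C^\infty(\pi)$, and in particular the filter converges to $\eta$ in $L^2(\pi)$ since $\norm{\cdot}_{L^2(\pi)} = p_{0,1}$ is among the defining semi-norms. Using the sets $A_{k,S,\epsilon}$ with $(0,1) \in S$ shows that the same filter also converges to $\xi$ in $L^2(\pi)$; Hausdorffness of $L^2(\pi)$ forces $\eta = \xi$, hence $\xi \in C^\infty(\pi)$.

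The only genuine subtlety is that the family $\{p_{l,a}\}$ need not be countable, so one cannot reduce the argument to sequences and a diagonal extraction: it is inherently a Cauchy-filter (or net) argument, and the completeness hypothesis in \MyCref{dfn:unitary_representation_abstract} is the right tool to invoke. Everything else is formal bookkeeping.
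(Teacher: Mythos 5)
Your proof is correct and follows essentially the same route as the paper: the reverse inclusion is established by building a Cauchy net/filter out of the approximating sequences furnished by the definition of $H^k(\pi)$, invoking the completeness of $C^\infty(\pi)$ from condition~3 of \MyCref{dfn:unitary_representation_abstract}, and identifying the limit with $\xi$ via the $L^2$ semi-norm. Your filter-base formulation with the tolerance $\epsilon$ built into the index is just a cosmetic variant of the paper's net indexed by finite subsets of $\bigsqcup_{k}\cA_k$, and your explicit remark that non-metrizability forces a net/filter argument matches the paper's own comment.
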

            \begin{proof}
                Let $\xi \in \bigcap_{k\in \R^\nu_+}H^k(\pi)$.
                We will write $\xi$ as the limit of a net in $C^\infty(\pi)$.
                We will have to use nets because  $C^\infty(\pi)$ isn't necessarily metrizable.
                The index set of our net is the set of all finite subsets of $\bigsqcup_{k\in \C^\nu}\cA_k$ ordered by inclusion.
                For each finite subset $F\subseteq \bigsqcup_{k\in \C^\nu}\cA_k$, let $k\in \R_+^\nu$ such that if $F\cap \cA_l\neq \emptyset$, then  $\Re(l)\leq k$.
                Since $\xi\in H^{k}(\pi)$, let $(\xi_n)_{n\in \N}$ be as in the definition of $H^k(\pi)$.
                Let $\xi_F$ be an element in the sequence $\xi_n$ such that $\norm{\pi_l(a)\xi_F-\pi_l(a)\xi}_{L^2(\pi)}\leq \frac{1}{|F|}$ for all $a\in F$.
                The net $\xi_F$ is Cauchy in $C^\infty(\pi)$, so it has a limit in $C^\infty(\pi)$ which must be $\xi$.
            \end{proof}

        The next theorem is the main theorem of this article. 
        %In a Hilbert space an injective map whose adjoint is injective isn't necessarily invertible. But if one knows that it is Fredholm then it is invertible.
        %The main assumption of \Cref{thm:simple_arg_sobolev} which is that $\overline{\cA_0}$ is of Type I basically ensures that one can reduce to Fredholm operators when studying invertability.
        In the following theorem, let $K$ be the set of $k\in \C^\nu$ such that $k_i\neq 0$ for at most one $i\in\{1,\cdots,\nu\}$ and $\Re(k_i),\im(k_i)\in [-1,1]$.
            \begin{rem}\label{rem:K_smaller}
            In the following theorem, one can replace $[-1,1]$ in the definition of $K$ by $[-C,C]$ for any $C\in \Rpt$.
    \end{rem}
    \begin{theorem}\label{thm:simple_arg_sobolev}
        Let $(\cA)_{k\in \C^\nu}$ be a $C^*$-calculus, $\Sigma(\cA)$ a set of representations of $(\cA)_{k\in \C^\nu}$ such that the following hold:
        \begin{enumerate}
            \item The $C^*$-algebra $\overline{\cA_0}$ is of Type I.
            \item Any irreducible representation of $\overline{\cA_0}$ is unitary equivalent to $\pi_0$ for some $\pi\in \Sigma(\cA)$.
            \item We can find a family of elements $v_k\in \cA_k$ for $k\in  K$ such that: \begin{enumerate}
				\item The element $v_0$ is invertible in $\overline{\cA_0}$.% for all $k\in \R^\nu$.
			%	\item For any $\pi\in\Sigma(\cA)$ and $k\in K$, the following maps are injective 
             %       \begin{equation}\label{eqn:qsdkjfklsqdjfkmqlsdf}\begin{aligned}
              %          \pi_k(v_k):C^\infty(\pi)\to C^\infty(\pi), \quad \pi_k(v_k^*):C^\infty(\pi)\to C^\infty(\pi)%\\
               %         %\pi_k(v_k)&:H^k(\pi)\to L^2(\pi), &&\forall k\geq 0\\
                %        %\pi_k(v_k)&:L^2(\pi)\to H^{-k}(\pi),&&\forall k\leq 0        
                 %   \end{aligned}\end{equation}
                %For each $\pi\in \Sigma(\cA)$ and $k\in \R^\nu$, we can find a continuous positive injective linear map $T_{\pi,k}:L^2(\pi)\to L^2(\pi)$ such that 
				%	\begin{equation}\label{eqn:positive_vk_chap6}\begin{aligned}
				%		\langle \pi_k(v_k) \xi,\xi\rangle\geq \langle T_{\pi,k} \xi,\xi\rangle\geq 0\quad \forall \xi\in C^\infty(\pi).		
				%	\end{aligned}\end{equation}
				%	In particular, $C^\infty(\pi)\xrightarrow{\pi_k(v_k)} C^\infty(\pi)$ is injective.
				\item For any $a\in \cA_0$, the map 
                    \begin{equation}\label{eqn:continuity_hypothesis}\begin{aligned}
                        k\in K \mapsto v_{k}av_{-k}\in \overline{\cA_0}
                    \end{aligned}\end{equation}
				is continuous, where $\overline{\cA_0}$ is equipped with the norm topology from $\norm{\cdot}_{\overline{\cA_0}}$.
        \end{enumerate}
        		\end{enumerate}

		Then following hold:
		\begin{enumerate}
            \item\label{thm:simple_arg_sobolev:3} Let $k\in\C^\nu$ and $a\in \cA_k$. If for any $\pi\in \Sigma(\cA)$, $C^\infty(\pi)\xrightarrow{\pi_k(a)} C^\infty(\pi)$ and $C^\infty(\pi)\xrightarrow{\pi_{\bar{k}}(a^*)} C^\infty(\pi)$ are injective, then for every representation $\pi$ of $(\cA)_{k\in \C^\nu}$ (not necessarily irreducible and not necessarily in $\Sigma(\cA)$), the maps
                                \begin{equation}\label{eqn:skqdjfmsqmjdfjsmjmfqsfd}\begin{aligned}
                                                                 &\pi_k(a):C^{-\infty}(\pi)\to C^{-\infty}(\pi)\\
&\pi_k(a):H^l(\pi)\to H^{l-\Re(k)}(\pi)&&\forall l\in \R^\nu\\
&\pi_k(a):C^\infty(\pi)\to C^\infty(\pi)
                                \end{aligned}\end{equation}
					are topological isomorphism. Furthermore, for any $k\in \C^\nu$ there exists $a\in \cA_k$ which satisfies the above.
 \item For any representation $\pi$ of $(\cA)_{k\in \C^\nu}$, $H^k(\pi)$ is a Hilbertian space, and \begin{equation}\label{eqn:HkPi_alternate}\begin{aligned}
                        		H^k(\pi)=\{\xi\in C^{-\infty}(\pi):\pi_k(a)\xi\in L^2(\pi),\ \forall a\in \cA_k\}		
                                       % =\{\xi\in C^{-\infty}(\pi):\pi_k(v_k)\xi\in L^2(\pi)\},
                    \end{aligned}\end{equation} and $C^{-\infty}(\pi)=\bigcup_{k\in \R^\nu}H^k(\pi)$.
		%	\item\label{thm:simple_arg_sobolev:1} 
		%	For every $k\in \R^\nu$ and $\pi\in \Sigma(\cA)$, $\pi_k(v_k^*):C^{\infty}(\pi)\to C^\infty(\pi)$ is injective.
                %For each $k,l\in \R^\nu$ and the map $\pi(v_k):H^{l}(\pi)\to H^{l-k}(\pi)$ is a topological isomorphism. 
				%	
			%\item\label{thm:simple_arg_sobolev:2} 	If $k\leq l$, then $H^l(\pi)\subseteq H^k(\pi)$, and the inclusion is continuous, and $C^{-\infty}(\pi)=\bigcup_{k\in \R^\nu}H^k(\pi)$. %If $l\neq k$, then the inclusion $H^{l}(\pi)\subseteq H^k(\pi)$  is compact.
		
			\item\label{thm:simple_arg_sobolev:inv} If $a\in \cA_0$, then $a$ is left-invertible in $\overline{\cA_0}$ if and only if for all $\pi\in \Sigma(\cA)$, $C^\infty(\pi)\xrightarrow{\pi_0(a)} C^\infty(\pi)$ is injective
		\end{enumerate}
		\iffalse
		\item If $v\in \cA_k$ satisfies that for every irreducible representation of $\overline{\cA_0}$, $\pi(v):C^\infty(\pi)\to C^\infty(\pi)$ is bijective, then the following holds:
		\begin{enumerate}
			\item     For every irreducible representation of $\overline{\cA_0}$ and $l\in \R^\nu$, the maps 
				\begin{equation*}\begin{aligned}
							&\pi(v):C^{-\infty}(\pi)\to C^{-\infty}(\pi)\\
							&\pi(v):H^l(\pi)\to H^{l-k}(\pi),\quad \forall l\in \R^\nu
				\end{aligned}\end{equation*}
					are topological isomorphisms.
			\item The elements $v_{-k}v$ and $vv_{-k}$ are invertible in the $C^*$-algebra $\overline{\cA_0}$.
		\end{enumerate}
		\fi
	\end{theorem}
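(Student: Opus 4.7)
The heart of the argument is part~(3); parts~(1) and~(2) will then be deduced from it. For part~(3), suppose $a\in\cA_0$ satisfies that $\pi_0(a)$ is injective on $C^\infty(\pi)$ for every $\pi\in\Sigma(\cA)$. Since left-invertibility of $a$ in $\overline{\cA_0}$ is equivalent to invertibility of $a^*a$ (as already used in the proof of \MyCref{prop:path_inv}), \MyCref{thm:type_I_inv} reduces the question to checking, for each irreducible representation $\sigma$ of $\overline{\cA_0}$, that $\sigma(a^*a)$ Fredholm implies $\sigma(a^*a)$ invertible. By hypothesis~(2) any such $\sigma$ is unitarily equivalent to $\pi_0$ for some $\pi\in\Sigma(\cA)$, and since $\pi_0(a^*a)=\pi_0(a)^*\pi_0(a)$ as bounded operators on $L^2(\pi)$, the kernel of $\pi_0(a^*a)$ coincides with $\ker\pi_0(a)\cap L^2(\pi)$. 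A self-adjoint Fredholm operator is invertible if and only if this kernel is trivial.

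The crux of part~(3), and the step I expect to be the main obstacle, is the elliptic regularity claim: whenever $\pi_0(a^*a)$ is Fredholm on $L^2(\pi)$, one has $\ker\pi_0(a)\cap L^2(\pi)\subseteq C^\infty(\pi)$; combined with the injectivity on smooth vectors this forces the kernel to be zero. To prove this I would use the continuity assumption \eqref{eqn:continuity_hypothesis}. For every $k\in K$ the order-zero element $b_k:=v_k(a^*a)v_{-k}\in\cA_0$ depends continuously on $k$ in $\overline{\cA_0}$; since $v_0$ is invertible, $b_0=v_0(a^*a)v_0$ is Fredholm, and Fredholmness is open in $\cL(L^2(\pi))$, so $\pi_0(b_k)$ remains Fredholm for $k$ in a neighborhood of $0$. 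Using \MyCref{rem:K_smaller} to enlarge $K$ along any direction, together with the factorization $\pi_0(b_k)=\pi_k(v_k)\pi_0(a^*a)\pi_{-k}(v_{-k})$, one can show that for each $k\in K$ the image $\pi_{-k}(v_{-k})\xi\in C^{-\infty}(\pi)$ of an $L^2$-kernel element $\xi$ lies in the finite-dimensional kernel of the Fredholm operator $\pi_0(b_k)$ and is therefore controlled in $L^2(\pi)$-norm, placing $\xi$ in every $H^l(\pi)$ and hence, by \eqref{eqn:Cinfinty_intersection}, in $C^\infty(\pi)$.

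For the existence claim of part~(1), I would manufacture an ``elliptic'' element in every order. Using \MyCref{rem:K_smaller} we may assume $v_{ze_i}\in\cA_{ze_i}$ exists for every $z\in\C$ and every standard basis direction $e_i$; set $V_k:=v_{k_1e_1}\cdots v_{k_\nu e_\nu}\in\cA_k$ for $k=(k_1,\ldots,k_\nu)$. Applying \eqref{eqn:continuity_hypothesis} with $a=1$ shows $v_{ze_i}v_{-ze_i}$ is continuous in $z$ and equal to the invertible $v_0^2$ at $z=0$, so it is invertible in $\overline{\cA_0}$ for $z$ near $0$; \MyCref{prop:path_inv} applied along the path $t\mapsto v_{tze_i}v_{-tze_i}$ extends invertibility to every $z\in\C$. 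Hence each $\pi_{ze_i}(v_{ze_i})$ is a topological isomorphism on the natural Sobolev scales, and so is the product $V_k$; in particular $V_k$ satisfies the injectivity hypothesis of part~(1).

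Finally, for a general $a\in\cA_k$ satisfying the hypothesis of part~(1), factor $a=V_k\cdot(V_{-k}a)$ with $V_{-k}a\in\cA_0$. The injectivity on smooth vectors of $\pi_k(a)$ and $\pi_{\bar k}(a^*)$ transfers to $\pi_0(V_{-k}a)$ and its adjoint via the topological isomorphism property of $\pi_{\pm k}(V_{\pm k})$ on $C^\infty(\pi)$. Part~(3) applied to $V_{-k}a$ and its adjoint yields invertibility in $\overline{\cA_0}$, and composing with $\pi_k(V_k)$ delivers the topological isomorphisms of \eqref{eqn:skqdjfmsqmjdfjsmjmfqsfd} on $C^{-\infty}(\pi)$, on each $H^l(\pi)$, and on $C^\infty(\pi)$. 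For part~(2), the existence of such an invertible $a\in\cA_k$ makes $\|\xi\|_{L^2(\pi)}+\|\pi_k(a)\xi\|_{L^2(\pi)}$ an equivalent Hilbertian norm on $H^k(\pi)$, yielding the description \eqref{eqn:HkPi_alternate}; and $C^{-\infty}(\pi)=\bigcup_l H^l(\pi)$ follows by applying a sufficiently high-order $\pi_{-l}(V_{-l})$ to a distribution to land in $L^2(\pi)$.
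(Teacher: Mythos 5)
Your overall architecture matches the paper's: reduce to a self-adjoint element of order $0$, invoke the Type~I dichotomy (\MyCref{thm:type_I_inv}), and exploit finite-dimensionality of the kernel of the resulting Fredholm operator. But the step you yourself single out as the crux --- that $\ker\pi_0(a^*a)\cap L^2(\pi)\subseteq C^\infty(\pi)$ whenever $\pi_0(a^*a)$ is Fredholm --- is exactly where the argument breaks, and the sketch you give for it does not close. First, for $\xi\in\ker\pi_0(a^*a)$ and $k\in K_+$ one has $\pi_0(b_k)\bigl(\pi_{-k}(v_{-k})\xi\bigr)=\pi_k(v_k)\pi_0(a^*a)\pi_{-k}(v_{-k})^2\xi$, which is not $\pi_k(v_k)\pi_0(a^*a)\xi$, so $\pi_{-k}(v_{-k})\xi$ need not lie in $\ker\pi_0(b_k)$; and in any case $\pi_{-k}(v_{-k})\xi$ lands in $H^k(\pi)\subseteq L^2(\pi)$ automatically, so ``controlling it in $L^2$-norm'' says nothing about $\xi$ itself. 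The correct relation is that $\pi_{-k}(v_{-k})$ identifies $\ker\pi_0(v_kav_{-k})$ with $\ker\pi_0(a)\cap H^k(\pi)$, so what you would need is $\dim\ker\pi_0(v_kav_{-k})=\dim\ker\pi_0(a)$ for all $k\in K_+$; but for a norm-continuous family of Fredholm operators the kernel dimension is only \emph{upper} semicontinuous, which is the wrong direction --- it is perfectly consistent with the kernel failing to be smooth. A second, related gap: openness of the Fredholm property only gives Fredholmness of $\pi_0(v_kav_{-k})$ for $k$ near $0$; propagating it to all of $K_+$ is what the paper's \Cref{lem:step15} (the open--closed argument controlling $\mathrm{ord}(v_kav_{-k})$ via \MyCref{lem:smallest_ordinal_invertibility}) is for, and \MyCref{rem:K_smaller} does not substitute for it.

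The paper's \Cref{lem:step2} avoids proving smoothness of the kernel altogether, and this is the idea your proposal is missing. Since $V=\ker\pi_0(a)$ is finite dimensional and the spaces $H^l(\pi)$ are nested, the decreasing family $V\cap H^l(\pi)$ stabilizes: by \eqref{eqn:Cinfinty_intersection} there is a \emph{finite} $l$ with $V\cap H^l(\pi)=V\cap C^\infty(\pi)=0$. One then factors the index-zero Fredholm operator $\pi_0(v_{k_1}\cdots v_{k_n}av_{-k_n}\cdots v_{-k_1})$ (with $\sum k_i=l$) through $H^l(\pi)$; the outer maps are isomorphisms by \Cref{lem:step14} and the middle map $\pi_0(a):H^l(\pi)\to H^l(\pi)$ is injective precisely because $V\cap H^l(\pi)=0$, so the composite is injective, hence bijective by index zero. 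This forces $\pi_0(a):H^l(\pi)\to H^l(\pi)$ to be surjective, hence $\pi_0(a):L^2(\pi)\to L^2(\pi)$ has dense range, and a Fredholm operator of index zero with dense range is invertible. Invertibility --- and only then triviality and smoothness of the kernel --- comes out at the end rather than being fed in at the start. Without this (or some genuinely new regularity argument), your proof of part~(3), and hence of parts~(1) and~(2) which you correctly derive from it by factorization through the $v_k$'s, is incomplete.
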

    In the case where the map $\cA_0\to \overline{\cA_0}$ isn't injective then, when we say that $a\in \cA_0$ is invertible in $\overline{\cA_0}$, we mean that its image in $\overline{\cA_0}$ is invertible.
    %We recommend reading the section with examples before reading the proof.
   \iffalse
    \begin{rems}\label{rems:}
    \begin{enumerate}
         % \item The fact that $H^l(\pi)\xrightarrow{\pi_k(a)}H^{l-k}(\pi)$ is a topological isomorphism implies 
        \item Let $a\in \cA_k$ such that for all $\pi\in \Sigma(\cA)$, $C^\infty(\pi)\xrightarrow{\pi_k(a)} C^\infty(\pi)$ is injective. By \Crefitem{thm:simple_arg_sobolev}{3} applied to $a^*a$, we deduce that 
        for any representation $\pi$ of $(\cA)_{k\in \C^\nu}$, $C^{-\infty}(\pi)\xrightarrow{\pi_k(a)} C^{-\infty}(\pi)$ is injective, and for all $l\in \R^\nu$,  $H^l(\pi)\xrightarrow{\pi_k(a)}H^{l-\Re(k)}(\pi)$ has a continuous left-inverse.
        So, for any $l\in \R^\nu$ and $b\in \cA_{l}$, there exists $C>0$ such that
                \begin{equation*}\begin{aligned}
                    \norm{\pi(b)\xi}_{L^2(\pi)}\leq C\norm{\pi_{l-\Re(k)}(v_{l-\Re(k)})\pi_k(a)\xi}_{L^2(\pi)},\quad \forall        \pi \in \Sigma(\cA),\xi\in C^\infty(\pi).
                \end{aligned}\end{equation*}
       % Here, we used that $H^{l-k}(\pi)\xrightarrow{\pi_{l-k}(v_{l-k})}L^2(\pi)$ is a topological isomorphism  which follows from \Crefitem{thm:simple_arg_sobolev}{3} applied to $v_{l-k}$.
        If $l\leq \Re(k)$, then, one can replace $ \norm{\pi_{l-\Re(k)}(v_{l-\Re(k)})\pi_k(a)\xi}_{L^2(\pi)}$ by $\norm{\pi_k(a)\xi}_{L^2(\pi)}$.
        These inequalities are called the maximal hypoellipticity inequalities.
        \item  
    \end{enumerate}
    \end{rems}   
    \fi
    \begin{exs}\label{ex:}
        \begin{enumerate}
            \item    
                Let $M$ be a smooth compact manifold, $\cA_k$ be the space of classical pseudo-differential operators of order $k$ on $M$, and 
                $\norm{\cdot}_{\overline{\cA_0}}$ is the operator norm on $L^2(M)$, $\Sigma(\cA)=T^*M\backslash 0\sqcup \{\heartsuit\}$, where $\heartsuit$ is an auxillary point.
                For any $(x,\xi)\in T^*M\backslash 0$, we have an obvious irreducible representation of $(\cA)_{k\in \C}$ given by the principal symbol.
               % then $C^\infty(\pi):=\C$, and $\pi_k$ the classical principal symbol at $(x,\xi)$.
				Let $C^\infty(\heartsuit)=C^\infty(M,\omegahalf)$, $L^2(\heartsuit)=L^2(M)$, and $\heartsuit_k$ be the usual action of pseudo-differential operators on functions.
				In this setting, \MyCref{thm:simple_arg_sobolev}[3] says that if $P$ is an elliptic classical pseudo-differential $P$ of order $k$ such that $P$ and $P^*$ are injective on $C^\infty(M,\omegahalf)$, then 
                    \begin{equation*}\begin{aligned}
                        C^{-\infty}(M,\omegahalf)&\xrightarrow{P} C^{-\infty}(M,\omegahalf)&& \\
                        	 H^{k+l}(M) &\xrightarrow{P}H^l(M),&&\quad \forall l\in \R\\
                            C^{\infty}(M,\omegahalf)&\xrightarrow{P} C^{\infty}(M,\omegahalf)&&
                            \end{aligned}\end{equation*}
                are topological isomorphisms.
				This implies the main regularity theorem of elliptic classical pseudo-differential operators.
                To see this,
                if $Q$ is an elliptic operator, then one can apply the above to $P=Q^*Q+R$, where $R$ is a positive smoothing operator which is injective on $C^{\infty}(M,\omegahalf)$.
\item 
    Let $(\cA)_{k\in \C^{\nu_1}}$ and $(\cB_{k})_{k\in \C^{\nu_2}}$ be two $C^*$-calculi.
    We can define a $C^*$-calculus $\cC$ by $\cC_{(k,l)}=\cA_k\otimes \cB_l$ where $(k,l)\in \C^{\nu_1+\nu_2}$.
    We take $\Sigma(\cC)=\Sigma(\cA)\times \Sigma(\cB)$.
    If $(\pi,\pi')\in \Sigma(\cC)$, then for any $(k,l)\in \R^{\nu_1+\nu_2}$, we take $H^{(k,l)}(\pi,\pi'):=H^k(\pi)\otimes H^l(\pi') $ the Hilbert tensor product.
    One has $H^{(k,l)}(\pi,\pi')\subseteq H^{(k',l')}(\pi,\pi')$ if $(k',l')\leq (k,l)$.
    One then takes 
        \begin{equation*}\begin{aligned}
            C^\infty(\pi,\pi'):=\bigcap_{(k,l)\in \R^{\nu_1+\nu_2}}H^{(k,l)}(\pi,\pi'), \quad C^{-\infty}(\pi,\pi'):=\bigcup_{(k,l)\in \R^{\nu_1+\nu_2}}H^{(k,l)}(\pi,\pi')
        \end{aligned}\end{equation*}
    The maps $(\pi,\pi')_{(k,l)}:\cC_{(k,l)}\to \cL(C^{-\infty}(\pi,\pi'))$ are obviously defined.
    The only non-trivial fact to check is that the tensor product of two $C^*$-algebras of Type \RNum{1} is again of Type \RNum{1}, and the spectrum of the tensor product is the product of the spectrums, see \cite{GuichardetTensorProductTypeI}.
    It is interesting to note that if one takes the tensor product of two classical pseudo-differential calculi on manifolds $M_1$ and $M_2$, one obtains the symbol map that appears in Rodino's calculus \cite{RodinoProductCalculus}.
        \end{enumerate}
    \end{exs}
%	We remark that
	%Before, we proceed with the proof let us explain the main difficulty.
	%		%Inequality \eqref{eqn:positive_vk_chap6} only gives us that $C^\infty(\pi)\xrightarrow{\pi_k(v_k)} C^\infty(\pi)$ is injective.
	%		We ultimately need to show that $C^{-\infty}(\pi)\xrightarrow{\pi_k(v_k)} C^{-\infty}(\pi)$ is injective.
	%		We can try to use positive definitness of $\langle \pi_k(v_k)\xi,\xi\rangle$ but this inner product only makes sense if $\xi$ and $ \pi_k(v_k)\xi$ are in $L^2(\pi)$.
	%		So, the proof is divided into three parts:
	%		\begin{itemize}
	%			\item  We first introduce appropriate positive order Sobolev spaces on which positive definetness argument can be used.
	%			\item Using injectivity on positive Sobolev spaces together with the assumption $\overline{\cA_0}$ is of Type \RNum{1}, we prove that if $a\in \cA_0$ satisfies $C^\infty(\pi)\xrightarrow{\pi_0(a)} C^\infty(\pi)$ is injective for all $\pi\in \Sigma(\cA)$, then $a$ is left-invertible in $\overline{\cA_0}$.
	%			\item Finally, using the previous two steps, we can deduce injectivity on $C^{-\infty}(\pi)$.
	%		\end{itemize}
	\begin{proof}[Proof of \Cref{thm:simple_arg_sobolev}]
			%If $k,l\in \R^\nu$, by abuse of notation, we will write $k\leq l$ if $k_i\leq l_i$ for all $i\in \bb{1,\nu}$.
			%We will need another definition of the Sobolev spaces. We will ultimately prove that the two definitions coincide.
			%Let $\dbtilde{H}^k(\pi)$ be the space of continuous anti-linear functionals on $H^{-k}(\pi)$.
			%We won't need to equip $\dbtilde{H}^k(\pi)$ with any topology.
			%To make the proof more readable, we split it into several steps.
            If $a\in \cA_0$, then we say that $a$ is invertible if $a$ is invertible in $\overline{\cA_0}$.
            Throughout the proof, whenever we consider an irreducible representation of $\overline{\cA_0}$, we will suppose that it is of the form $\pi_0$ for some $\pi\in \Sigma(\cA)$.
            We will also be careful in distinguishing between arbitrary representations $\pi$ of $(\cA)_{k\in \C^\nu}$ and representations in $\Sigma(\cA)$.
            
            By continuity of $v_kv_{-k}$, it follows that $v_{k}v_{-k}$ is invertible for $k$ close enough to $0$. 
            So, we can suppose that $v_kv_{-k}$ is invertible for all $k\in  K$ (otherwise one rescales all indices).
            Let $K_+:=K\cap [0,1]^\nu$ and $K_{\R}=K_+\cup -K_+=K\cap \R^\nu$.
            \begin{lem}\label{lem:step14}
                 If $k\in K_\R$, $l\in \R^\nu$ and $\pi$ is a representation of $(\cA)_{k\in \C^\nu}$, then the maps 
                     \begin{equation*}\begin{aligned}
                        C^{-\infty}(\pi)\xrightarrow{\pi_k(v_k)} C^{-\infty}(\pi),\quad
                        H^l(\pi)\xrightarrow{\pi_{k}(v_{k})} H^{l-k}(\pi) ,\quad
                        C^\infty(\pi)\xrightarrow{\pi_k(v_k)} C^\infty(\pi)
                     \end{aligned}\end{equation*}
                    are topological isomorphisms. The same result holds with $v_k$ replaced by $v_k^*$.
            \end{lem}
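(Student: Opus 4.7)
My proof will rest on using the invertibility of $v_kv_{-k}$ and $v_{-k}v_k$ in $\overline{\cA_0}$ (established, after rescaling, in the remarks immediately preceding the lemma) to build explicit two-sided inverses to $\pi_k(v_k)$. Set
\[
u_k := (v_kv_{-k})^{-1} \in \overline{\cA_0}, \qquad \tilde u_k := (v_{-k}v_k)^{-1} \in \overline{\cA_0}.
\]
These elements are in general not in $\cA_0$ itself, which is the source of the main technical difficulty.

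\textbf{Candidate inverses and formal identities.} The natural candidates are
\[
R_k \;:=\; \pi_{-k}(v_{-k})\,\pi_0(u_k), \qquad L_k \;:=\; \pi_0(\tilde u_k)\,\pi_{-k}(v_{-k}),
\]
which obey the formal algebraic identities
\[
\pi_k(v_k)\,R_k \;=\; \pi_0(v_kv_{-k})\,\pi_0(u_k) \;=\; \pi_0(v_kv_{-k}\,u_k) \;=\; \pi_0(1) \;=\; \id,
\]
\[
L_k\,\pi_k(v_k) \;=\; \pi_0(\tilde u_k)\,\pi_0(v_{-k}v_k) \;=\; \pi_0(\tilde u_k\,v_{-k}v_k) \;=\; \id,
\]
using the representation axiom $\pi_k(v_k)\pi_{-k}(v_{-k}) = \pi_0(v_kv_{-k})$ and the defining relations of $u_k$ and $\tilde u_k$ in $\overline{\cA_0}$. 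By the uniqueness of two-sided inverses, once both $R_k$ and $L_k$ are shown to be well-defined, they automatically agree.

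\textbf{Main obstacle.} The delicate point is to make these formal identities rigorous on each of the three regularity levels. This reduces to the following key claim: \emph{for any $u \in \overline{\cA_0}$ and any $l \in \R^\nu$, the bounded operator $\pi_0(u) \in \cL(L^2(\pi))$ extends uniquely to a continuous endomorphism of $C^{-\infty}(\pi)$, of each $H^l(\pi)$ (with operator norm controlled by $\norm{u}_{\overline{\cA_0}}$), and of $C^\infty(\pi)$.} For $u \in \cA_0$ this is immediate from the definitions, since then $au \in \cA_j$ for every $a \in \cA_j$, so $\pi_j(a)\pi_0(u)\xi = \pi_j(au)\xi$ for $\xi \in C^\infty(\pi)$ visibly belongs to $L^2(\pi)$ when $\xi \in H^l(\pi)$ and $\Re(j) \leq l$. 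For general $u \in \overline{\cA_0}$ one approximates $u = \lim u_n$ with $u_n \in \cA_0$ and must show $(\pi_0(u_n))_n$ is Cauchy in $\cL(H^l(\pi))$. The continuity hypothesis \eqref{eqn:continuity_hypothesis} is the crucial ingredient: by conjugating through a suitable product of $v_{k_i}$'s (one coordinate at a time, using that any $l \in \R^\nu$ can be realized as a finite sum of elements of $K_\R$), the $H^l$-operator norm of $\pi_0(u_n - u_m)$ is controlled by the $L^2$-operator norm of a conjugated element of $\overline{\cA_0}$, which in turn is controlled by the $\overline{\cA_0}$-norm of $u_n - u_m$.

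\textbf{Assembly.} Granted the extension property above, $R_k$ becomes a bounded operator $H^{l-k}(\pi) \to H^l(\pi)$ for every $l \in \R^\nu$ (by composing $\pi_0(u_k):H^{l-k}\to H^{l-k}$ with $\pi_{-k}(v_{-k}):H^{l-k}\to H^l$), and analogously on $C^{\pm\infty}(\pi)$. The algebraic identities displayed above then hold as operator identities, exhibiting $\pi_k(v_k)$ as a topological isomorphism in each case. The statement for $v_k^*$ follows by applying the same argument with $v_k^* \in \cA_{\bar k} = \cA_k$ (since $k \in \R^\nu$), noting that $(v_kv_{-k})^* = v_{-k}^* v_k^*$ and $(v_{-k}v_k)^* = v_k^* v_{-k}^*$ are invertible in $\overline{\cA_0}$, so the construction of candidate inverses applies verbatim.
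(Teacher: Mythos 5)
Your reduction to the ``key claim'' is where the argument breaks: the claim that every $u\in\overline{\cA_0}$ acts continuously on $H^l(\pi)$ and on $C^{\pm\infty}(\pi)$ is false in general. In the motivating example where $\cA_0$ consists of classical order-$0$ pseudodifferential operators on a compact manifold, $\overline{\cA_0}$ contains all compact operators on $L^2(M)$, and a generic compact operator (e.g.\ a rank-one operator with range spanned by a non-smooth vector) does not preserve $H^1(M)$. Your proposed proof by approximation also does not get off the ground: hypothesis \eqref{eqn:continuity_hypothesis} gives continuity of $k\mapsto v_kav_{-k}$ for each \emph{fixed} $a\in\cA_0$, not a uniform bound $\norm{v_kav_{-k}}_{\overline{\cA_0}}\leq C\norm{a}_{\overline{\cA_0}}$, so you cannot conclude that $(\pi_0(u_n))_n$ is Cauchy in $\cL(H^l(\pi))$. (Moreover $H^l(\pi)$ is a priori only a space with a family of seminorms, so even for $a\in\cA_0$ the continuity of $\pi_0(a)$ on $H^l(\pi)$ is seminorm-by-seminorm, not an operator-norm bound by $\norm{a}_{\overline{\cA_0}}$.) Restricting the claim to the specific elements $u_k=(v_kv_{-k})^{-1}$ does not save it either: to see that $\pi_0(u_k)$ preserves $H^l(\pi)$ you would essentially need $\pi_0(v_kv_{-k})$ to be a bijection of $H^l(\pi)$, which is the composite of the two maps the lemma is trying to prove are isomorphisms --- the argument is circular. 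There is no spectral-invariance assumption in the abstract calculus that would put $u_k$ back into $\cA_0$.

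The paper circumvents exactly this issue by never inverting anything in $\overline{\cA_0}$ at Sobolev level. Instead it stays inside $\cA_0$ with words $v_{k_1}\cdots v_{k_n}v_{-k_n}\cdots v_{-k_1}$ (and more general words bracketing $v_k$), proves these are invertible in $\overline{\cA_0}$ via \Cref{prop:simple_arg} and \Cref{prop:path_inv}, factors the resulting bijection of $L^2(\pi)$ through the chain of Sobolev spaces, and uses the elementary observation that if a composite of injective maps is bijective then each factor is bijective; continuity of the inverses is then read off from explicit factorizations. If you want to salvage your approach you would need to replace the appeal to $\pi_0(u_k)$ on $H^l(\pi)$ by this kind of bootstrapping, or add a (nontrivial) hypothesis that $\cA_0$ is closed under inversion in $\overline{\cA_0}$.
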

            \begin{proof}
            The proof relies on a bootstrapping argument.
                Let $k\in K_\R$. 
                 Since $v_kv_{-k}$ is invertible, the map $L^2(\pi)\xrightarrow{\pi_0(v_kv_{-k})}L^2(\pi)$ is invertible. This map is the composition 
                    \begin{align}
                        L^2(\pi)\xrightarrow{\pi_{-k}(v_{-k})}H^k(\pi)\xrightarrow{\pi_{k}(v_k)}   L^2(\pi)\label{eqn:qsdifjkqsjdkmlfqsdf}%\\
                             %                       L^2(\pi)\xrightarrow{\pi_{-l}(v_{-l})} H^{l}(\pi) \xrightarrow{\pi_{k}(v_k^*)}H^{l-k}(\pi)\xrightarrow{\pi_{l-k}(v_{l-k})}L^2(\pi)\label{eqn:qsdifjkqsjdkmlfqsdf2}
                           % L^2(\pi)\xrightarrow{\pi_{-l}(v_{-l})} H^{l}(\pi) \xrightarrow{\pi_{k}(v_k)}H^{l-k}(\pi)\xrightarrow{\pi_{l-k}(v_{l-k})}L^2(\pi)\label{eqn:qsdifjkqsjdkmlfqsdf3}
                    \end{align}
                Injectivity of \eqref{eqn:qsdifjkqsjdkmlfqsdf} implies that $ L^2(\pi)\xrightarrow{\pi_{-k}(v_{-k})}H^k(\pi)$ is injective.
                 If $k\geq 0$, then by replacing $k$ with $-k$ and by the inclusion $H^k(\pi)\subseteq L^2(\pi)$, it follows that $H^k(\pi)\xrightarrow{\pi_{k}(v_k)}   L^2(\pi)$ is injective.
                The composition of two injective maps is a bijection implies that each is a bijection.
                So each map in \eqref{eqn:qsdifjkqsjdkmlfqsdf} is bijective (if $k\in K_+$).
                They are  topological isomorphisms because their inverses are $H^k(\pi)\xrightarrow{\pi_{k}(v_{k})}L^2(\pi)\xrightarrow{\pi_{0}(v_{k}v_{-k})^{-1}}L^2(\pi) $ and  $L^2(\pi)\xrightarrow{\pi_{0}(v_{k}v_{-k})^{-1}}L^2(\pi)\xrightarrow{\pi_{-k}(v_{-k})}H^{-k}(\pi)$ which are continous.
                We have thus proved that if $k\in K_+$, then the maps $L^2(\pi)\xrightarrow{\pi_{-k}(v_{-k})}H^k(\pi)$ and $H^k(\pi)\xrightarrow{\pi_{k}(v_k)}   L^2(\pi)$ are topological isomorphisms.

                Let $k,k'\in K_+$.
                Consider $v_{k}v_{k'}v_{-k'}v_{-k}\in \cA_0$. If $k'$ is fixed, then these operators depends continuously on $k$ by \eqref{eqn:continuity_hypothesis}.
                At $k=0$, they are invertible because $v_0$ and $v_{k'}v_{-k'}$ are invertible.
               For any representation $\pi$ of $(\cA)_{k\in \C^\nu}$, $L^2(\pi)\xrightarrow{\pi_0(v_{k}v_{k'}v_{-k'}v_{-k})}L^2(\pi)$ is the composition 
                   \begin{equation}\label{eqn:sdkfkjlskdjflsd}\begin{aligned}
                        L^2(\pi)\xrightarrow{\pi_{-k}(v_{-k})}H^k(\pi) \xrightarrow{\pi_{-k'}(v_{-k'})} H^{k+k'}(\pi)\xrightarrow{\pi_{k'}(v_{k'})}H^k(\pi)\xrightarrow{\pi_k(v_k)}L^2(\pi)      
                   \end{aligned}\end{equation}
               Each of these applications is injective by what we proved above, and the inclusions $H^k(\pi)\subseteq L^2(\pi)$ and $H^{k+k'}(\pi)\subseteq H^{k'}(\pi)$.
               By \Cref{prop:simple_arg} and our assumption that any irreducible representation is equivalent to $\pi_0$ for some $\pi\in \Sigma(\cA)$, $v_{k}v_{k'}v_{-k'}v_{-k}$ is left-invertible.
               So, by \Cref{prop:path_inv}, $v_{k}v_{k'}v_{-k'}v_{-k}$ is invertible.
               Hence, \eqref{eqn:sdkfkjlskdjflsd} is invertible.
               The composition of injective maps is bijective implies that each is bijective.
               And by the same argument as above, each is a topological isomorphism.
               By repeating the same argument with $v_{k_1}\cdots v_{k_n}v_{-k_n}\cdots v_{-k_1}$, it follows that $H^l(\pi)\xrightarrow{\pi_{-k}(v_{-k})} H^{l+k}(\pi)$ and  $H^{l+k}(\pi)\xrightarrow{\pi_{k}(v_{k})}H^l(\pi)$ are topological isomorphisms
               whenever $k\in K_+$, and $l\in \R_+^\nu$.

               Taking the limit as $l\to \infty$ and using \eqref{eqn:Cinfinty_intersection}, it follows that for any $k\in K_\R$, $C^\infty(\pi)\xrightarrow{\pi_k(v_k)} C^\infty(\pi)$ is a topological isomorphism.
               The properties satisfied by $v_k$ are also satisfied by $v_k^*$.
               So, by the same argument $C^\infty(\pi)\xrightarrow{\pi_k(v_k^*)} C^\infty(\pi)$ is a topological isomorphism.  
               By duality, $C^{-\infty}(\pi)\xrightarrow{\pi_k(v_k)} C^{-\infty}(\pi)$ and $C^{-\infty}(\pi)\xrightarrow{\pi_k(v_k^*)} C^{-\infty}(\pi)$ are topological isomorphisms for any $k\in K_\R$.

               Let $k\in K_\R$ and $l\in \R^\nu$. We need to show that $H^l(\pi)\xrightarrow{\pi_{k}(v_{k})} H^{l-k}(\pi)$ is a topological isomorphism.
               Let $k_1,\cdots,k_n,k_1',\cdots,k_m'\in K_\R$ such that $k_1+\cdots+k_n=-l$ and $k_1'+\cdots k_m'=l-k$. Consider $w=v_{k_1'}\cdots v_{k_m'}v_kv_{k_1}\cdots v_{k_n}\in \cA_0$.
                The map $L^2(\pi)\xrightarrow{\pi_0(w)} L^2(\pi)$ is equal to 
               \begin{equation}\label{eqn:sqdiofjmkdsqjmfjqmsdjf}\begin{aligned}
                            L^2(\pi)\xrightarrow{\pi_{k_n}(v_{k_n})}H^{-k_n}(\pi) \cdots \to H^{l}(\pi)\xrightarrow{\pi_k(v_k)}H^{l-k}(\pi) \to\cdots  \xrightarrow{\pi_{k_1'}(v_{k_1'})}L^2(\pi)
                    \end{aligned}\end{equation}
               Each of these maps is injective by injectivity on $C^{-\infty}(\pi)$ that we proved
               Hence, $w$ is left-invertible by \Cref{prop:simple_arg}.
               By the same argument on the adjoint, $w$ is invertible.
               So $L^2(\pi)\xrightarrow{\pi_0(w)} L^2(\pi)$ is bijective which implies that each map in \eqref{eqn:sqdiofjmkdsqjmfjqmsdjf} is bijective.
               By a similar argument, we show that each is a topological isomorphism.
            \end{proof}
            \begin{lem}\label{lem:step15}
                For any $a\in \cA_0$ and $k\in K_+$, $\mathrm{ord}(v_kav_{-k})\leq \mathrm{ord}(a)$.
            \end{lem}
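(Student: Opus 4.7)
My plan is to apply \Cref{prop:path_inv} to the continuous path $a_t := v_{tk}\, a\, v_{-tk}$ for $t \in [0,1]$, viewed inside the quotient $C^*$-algebra $\overline{\cA_0}/I_\beta$ with $\beta = \mathrm{ord}(a)$. The case $\beta = \alpha$ is vacuous, so I will assume $\beta < \alpha$, in which case $a$ is invertible in $\overline{\cA_0}/I_\beta$. Since $tk \in K_+$ for all $t \in [0,1]$, the continuity hypothesis \eqref{eqn:continuity_hypothesis} gives continuity of $t \mapsto a_t$ in $\overline{\cA_0}$, and hence in the quotient. The anchor $a_0 = v_0 a v_0$ is invertible in $\overline{\cA_0}/I_\beta$ as a product of $v_0$ (invertible in $\overline{\cA_0}$) and $a$ (invertible modulo $I_\beta$).

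The substantive step is to verify that each $a_t$ is left-invertible in $\overline{\cA_0}/I_\beta$. By \Cref{prop:simple_arg} this reduces to proving that $\pi_0(a_t)$ is injective on $L^2(\pi)$ for every irreducible representation $\pi_0$ of $\overline{\cA_0}/I_\beta$; by hypothesis such a $\pi_0$ lifts to a representation $\pi \in \Sigma(\cA)$ of the full calculus. I would then factor
\[
\pi_0(a_t) \;:\; L^2(\pi) \xrightarrow{\pi_{-tk}(v_{-tk})} H^{tk}(\pi) \xrightarrow{\pi_0(a)} H^{tk}(\pi) \xrightarrow{\pi_{tk}(v_{tk})} L^2(\pi),
\]
where the outer maps are topological isomorphisms by \Cref{lem:step14}. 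The middle map is the restriction of $\pi_0(a): L^2(\pi) \to L^2(\pi)$, which is invertible because $\pi_0$ factors through $\overline{\cA_0}/I_\beta$, to the subspace $H^{tk}(\pi) \subseteq L^2(\pi)$ (valid since $tk \geq 0$). This restriction is therefore injective, and the composition is injective as required, yielding left-invertibility of $a_t$.

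\Cref{prop:path_inv} will then upgrade left-invertibility to two-sided invertibility along the whole path, so $a_1 = v_k a v_{-k}$ is invertible in $\overline{\cA_0}/I_\beta$ and $\mathrm{ord}(v_k a v_{-k}) \leq \beta$. The delicate point I expect is the asymmetry between left- and right-invertibility: the factorization together with $H^{tk} \subseteq L^2$ yields only injectivity, not surjectivity, of the middle map, so the Gelfand-type argument produces left-inverses but not right-inverses directly. \Cref{prop:path_inv} is exactly the device built to bridge this gap, exploiting the invertibility anchor at $t=0$ furnished by $v_0$.
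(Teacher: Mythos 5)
Your proposal is correct, and it reaches the conclusion by a genuinely different (and somewhat leaner) mechanism than the paper. Both arguments share the same core: the factorization of $\pi_0(v_{tk}av_{-tk})$ through $L^2(\pi)\xrightarrow{\pi_{-tk}(v_{-tk})}H^{tk}(\pi)\xrightarrow{\pi_0(a)}H^{tk}(\pi)\xrightarrow{\pi_{tk}(v_{tk})}L^2(\pi)$, the use of \Cref{lem:step14} for the outer maps, the inclusion $H^{tk}(\pi)\subseteq L^2(\pi)$ to get injectivity of the middle map, and \Cref{prop:simple_arg} to convert pointwise injectivity into left-invertibility. Where you diverge is in the upgrade from left-invertibility to invertibility. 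The paper argues that the set of $k\in K_+$ for which $v_kav_{-k}$ is invertible modulo $I_{\mathrm{ord}(a)}$ is open, non-empty, and closed, and the closedness step is the delicate one: it passes to the quotient by $I_{\mathrm{ord}(v_kav_{-k})-1}$, invokes \Cref{lem:smallest_ordinal_invertibility} to get Fredholmness, uses the limit-of-invertibles observation to pin the index at $0$, and then derives a contradiction with the minimality of $\mathrm{ord}(v_kav_{-k})$. You instead stay in the single fixed quotient $\overline{\cA_0}/I_{\mathrm{ord}(a)}$ and feed the whole path $t\mapsto v_{tk}av_{-tk}$ into \Cref{prop:path_inv}, with the anchor $v_0av_0$ invertible there. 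This avoids \Cref{lem:smallest_ordinal_invertibility} and any explicit Fredholm index bookkeeping for this lemma (the index-type argument is of course hidden inside the proof of \Cref{prop:path_inv}, via the continuously varying projection $a_t(a_t^*a_t)^{-1}a_t^*$). Your route is shorter and arguably cleaner for this step; the paper's route has the same structure as the later, harder \Cref{lem:step2}, where the Fredholm index argument is genuinely needed, which may explain the authorial choice. All the hypotheses you use are available at this point in the proof ($tk\in K_+$ for $t\in[0,1]$, continuity from \eqref{eqn:continuity_hypothesis}, and the identification of irreducible representations of the quotient with elements of $\Sigma(\cA)$ killing $I_{\mathrm{ord}(a)}$), so I see no gap.
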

            \begin{proof}
                Without loss of generality, we can suppose that $\mathrm{ord}(a)<\alpha$.
                We need to show that $v_kav_{-k}$ is invertible in $\overline{\cA_0}/I_{\mathrm{ord}(a)}$.
                By our assumption of continuity, the set of $k\in K_+$ such that $v_kav_{-k}$ is invertible in $\overline{\cA_0}/I_{\mathrm{ord}(a)}$ is open.
                It is non-empty because $v_0$ is invertible in $\overline{\cA_0}$.
                We will show that it is closed.
                Let $k$ be a point in the closure.
                Suppose $v_kav_{-k}$ isn't invertible in $\overline{\cA_0}/I_{\mathrm{ord}(a)}$.
                So, $ \mathrm{ord}(a)<\mathrm{ord}(v_kav_{-k})$.
                Let $\pi\in \Sigma(\cA)$ be an irreducible representation  $\overline{\cA_0}/I_{\mathrm{ord}(v_kav_{-k})-1}$.
                The operator $L^2(\pi)\xrightarrow{\pi_0(v_kav_{-k})}L^2(\pi)$ is Fredholm operator (by \MyCref{lem:smallest_ordinal_invertibility}) which is a limit of invertible operators (because $k$ is in the closure and $\mathrm{ord}(a)\leq \mathrm{ord}(v_kav_{-k})-1$).
                So, it has Fredholm index $0$.
                The map  $L^2(\pi)\xrightarrow{\pi_0(v_kav_{-k})}L^2(\pi)$ is the composition of the maps 
                    \begin{equation*}\begin{aligned}
                            L^2(\pi)\xrightarrow{\pi_{-k}(v_{-k})}H^k(\pi)\xrightarrow{\pi_0(a)}H^k(\pi)\xrightarrow{\pi_k(v_k)}L^2(\pi)        
                    \end{aligned}\end{equation*}
                The outer maps are injective by \Cref{lem:step14}. The inner one is injective because $L^2(\pi)\xrightarrow{\pi_0(a)}L^2(\pi)$ is invertible ($\mathrm{ord}(a)\leq \mathrm{ord}(v_kav_{-k})-1$ and $H^k(\pi)\subseteq L^2(\pi)$).
                So, the map $L^2(\pi)\xrightarrow{\pi_0(v_kav_{-k})}L^2(\pi)$ is injective Fredholm of index 0. Hence, it is invertible.
                So, by \Cref{prop:simple_arg}, $v_kav_{-k}$ is invertible in $\overline{\cA_0}/I_{\mathrm{ord}(v_kav_{-k})-1}$ which contradicts the minimality of $\mathrm{ord}(v_kav_{-k})$.
            \end{proof}

            \begin{lem}\label{lem:step2}
                 If $a\in \cA_0$ such that for any $\pi\in \Sigma(\cA)$, $C^\infty(\pi)\xrightarrow{\pi_0(a)} C^\infty(\pi)$ is injective, then $a$ is left-invertible in $\overline{\cA_0}$.
            \end{lem}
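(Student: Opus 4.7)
The plan is to reduce left-invertibility of $a$ in $\overline{\cA_0}$ to invertibility of $a^*a$ via \Cref{prop:path_inv}, and then apply \Cref{thm:type_I_inv}: it will suffice to show that for every irreducible representation $\pi$ of $\overline{\cA_0}$ (which by assumption~2 of the theorem one may choose in $\Sigma(\cA)$) with $\pi_0(a^*a)$ Fredholm on $L^2(\pi)$, the operator $\pi_0(a^*a)$ is in fact invertible. Self-adjointness of $\pi_0(a^*a)$ makes this equivalent to the vanishing of the finite-dimensional subspace $V := \ker\pi_0(a^*a)|_{L^2(\pi)} = \ker\pi_0(a)|_{L^2(\pi)}$.

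The hypothesis immediately forces $V \cap C^\infty(\pi) = 0$. Combined with \eqref{eqn:Cinfinty_intersection} and the observation that $\bigcap_{t \geq 0} H^{(t,\ldots,t)}(\pi) = C^\infty(\pi)$, the non-increasing chain of subspaces $V \cap H^{(t,\ldots,t)}(\pi)$ of the finite-dimensional space $V$ must stabilize to $0$; I fix $t^* > 0$ with $V \cap H^{k^*}(\pi) = 0$, where $k^* := (t^*,\ldots,t^*) \in \R_+^\nu$, and invoke \Cref{rem:K_smaller} with $C = t^*$ to ensure that $k_i := t^* e_i$ lies in $K_+$ for each $i = 1,\ldots,\nu$.

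I then run an index computation along the continuous path
$$u_s := v_{sk_1} v_{sk_2}\cdots v_{sk_\nu}\,(a^*a)\, v_{-sk_\nu}\cdots v_{-sk_2}\, v_{-sk_1} \in \cA_0,\qquad s \in [0,1],$$
which is continuous in $\overline{\cA_0}$ by iterated application of \eqref{eqn:continuity_hypothesis}. By iterated \Cref{lem:step15}, $\mathrm{ord}(u_s) \leq \mathrm{ord}(a^*a)$; combined with \Cref{lem:smallest_ordinal_invertibility} applied at the level of $\pi$, this guarantees that $\pi_0(u_s)$ is Fredholm on $L^2(\pi)$ for every $s$. The Fredholm index is then locally constant, hence constant along the path, and at $s = 0$, $\pi_0(u_0) = \pi_0(v_0)^\nu \pi_0(a^*a) \pi_0(v_0)^\nu$ is the self-adjoint index-zero operator $\pi_0(a^*a)$ sandwiched by the invertible $\pi_0(v_0)^\nu$, so the common index equals $0$.

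At $s = 1$, iterated \Cref{lem:step14} applied to $v_{k_i}$, $v_{-k_i}$ and their adjoints factors $\pi_0(u_1)$ and $\pi_0(u_1)^* = \pi_0(u_1^*)$ on $L^2(\pi)$ through $\pi_0(a^*a)$ acting on $H^{k^*}(\pi)$ and $H^{-k^*}(\pi)$ respectively, sandwiched by topological isomorphisms. Since $H^{k^*}(\pi) \subseteq L^2(\pi) \subseteq H^{-k^*}(\pi)$ and $\pi_0(a^*a)$ annihilates $V$, we obtain
$$\ker \pi_0(u_1) \cong V \cap H^{k^*}(\pi) = 0, \qquad \ker \pi_0(u_1)^* \supseteq V,$$
so that $0 = \mathrm{index}(\pi_0(u_1)) \leq 0 - \dim V$, forcing $V = 0$. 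The principal obstacle is organizing this asymmetric Sobolev picture — the kernel computed on the small space $H^{k^*}(\pi)$ where it must vanish and the cokernel on the large space $H^{-k^*}(\pi)$ where it must contain $V$ — while preserving Fredholmness and the index along the whole path; the three preceding lemmas are designed precisely for this purpose.
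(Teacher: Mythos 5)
Your overall strategy --- symmetrize to $a^*a$, trap the finite-dimensional kernel $V$ inside $C^\infty(\pi)$ by a Sobolev stabilization argument, and kill it by a Fredholm-index computation along the conjugation path $s\mapsto v_{sk}(a^*a)v_{-sk}$ --- is essentially the paper's proof, and your ending (comparing $\dim\ker\pi_0(u_1)=\dim(V\cap H^{k^*}(\pi))=0$ with $\dim\ker\pi_0(u_1)^*\geq\dim V$ to force $V=0$) is a clean variant of the paper's ``injective and index $0$, hence invertible, hence dense image'' argument. There is, however, a genuine gap in how you package the reduction. By invoking \Cref{thm:type_I_inv} as a black box you oblige yourself to verify, for \emph{every} irreducible representation $\pi$ with $\pi_0(a^*a)$ Fredholm, that $\pi_0(a^*a)$ is invertible. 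But your justification that $\pi_0(u_s)$ is Fredholm for all $s$ --- iterated \Cref{lem:step15} plus \Cref{lem:smallest_ordinal_invertibility} --- only applies when $\pi$ is an irreducible representation of $\overline{\cA_0}/I_{\mathrm{ord}(a^*a)-1}$, i.e.\ a representation sitting at the critical level of the composition series. An irreducible $\pi$ living at a strictly lower level can have $\pi_0(a^*a)$ Fredholm ``accidentally'': Fredholmness there means invertibility modulo the ideal $\pi_0^{-1}(\cK(L^2(\pi)))$, which need not be comparable to $I_{\mathrm{ord}(a^*a)-1}$, so $\mathrm{ord}(u_s)\leq\mathrm{ord}(a^*a)$ tells you nothing. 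Unwinding the factorization through \Cref{lem:step14}, Fredholmness of $\pi_0(u_s)$ on $L^2(\pi)$ is equivalent to Fredholmness of $\pi_0(a^*a)$ as an operator on $H^{sk^*}(\pi)$, and Fredholmness does not pass between Sobolev levels without exactly the kind of regularity one is trying to prove. For such $\pi$ the index along your path is not even defined, so the homotopy argument collapses.

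The repair is small and is precisely what the paper does: do not use \Cref{thm:type_I_inv} as a black box, but argue by contradiction on $\mathrm{ord}(a^*a)$ and run the index argument only for irreducible representations of $\overline{\cA_0}/I_{\mathrm{ord}(a^*a)-1}$ --- exactly those for which \Cref{lem:smallest_ordinal_invertibility} guarantees Fredholmness of every $u_s$ (either $\mathrm{ord}(u_s)=\mathrm{ord}(a^*a)$ and the lemma applies, or $\mathrm{ord}(u_s)<\mathrm{ord}(a^*a)$ and $\pi_0(u_s)$ is already invertible). Your computation then shows $\pi_0(a^*a)$ is invertible for all such $\pi$, whence by \Cref{prop:simple_arg} the element $a^*a$ is invertible in $\overline{\cA_0}/I_{\mathrm{ord}(a^*a)-1}$, contradicting minimality unless $\mathrm{ord}(a^*a)=0$. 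A second, cosmetic point: \Cref{rem:K_smaller} does not hand you elements $v_{t^*e_i}$ for large $t^*$; write $t^*e_i$ as a sum $k_1+\cdots+k_n$ with $k_j\in K_+$ and use the products $v_{k_1}\cdots v_{k_n}$, continuity of the resulting path following by iterating \eqref{eqn:continuity_hypothesis}, as in the paper.
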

            \begin{proof}
				By replacing $a$ by $a^*a$, we can suppose that $a$ is self-adjoint.
                We will show that $\mathrm{ord}(a)=0$.
                Suppose it isn't.
                We will obtain a contradiction by showing that $a$ is invertible in $\overline{\cA}_0/I_{\mathrm{ord}(a)-1}$.
                Let $\pi\in \Sigma(\cA)$ be an irreducible representation of $\overline{\cA_0}/I_{\mathrm{ord}(a)-1}$.
                Since $a$ is self-adjoint, $L^2(\pi)\xrightarrow{ \pi_0(a)}L^2(\pi)$ is a Fredholm operator of index $0$.
                %So, if we prove that it has dense image, then by \MyCref{prop:simple_arg} we obtain a contradiction.

                We now consider the elements $v_{k_1}\cdots v_{k_n}a v_{-k_n}\cdots v_{-k_1}\in \cA_0$ where $k_1,\cdots,k_n\in K_+$ and $n\in \N$.
                By iterated application of \Cref{lem:step15}, we have 
                    \begin{equation*}\begin{aligned}
                        \mathrm{ord}(v_{k_1}\cdots v_{k_n}a v_{-k_n}\cdots v_{-k_1})\leq \mathrm{ord}(a)     
                    \end{aligned}\end{equation*}
                Therefore, $L^2(\pi)\xrightarrow{\pi_0(v_{k_1}\cdots v_{k_n}a v_{-k_n}\cdots v_{-k_1})} L^2(\pi)$ is a Fredholm operator. To see this, either $ \mathrm{ord}(v_{k_1}\cdots v_{k_n}a v_{-k_n}\cdots v_{-k_1})=\mathrm{ord}(a)$ in which case \Cref{lem:smallest_ordinal_invertibility} gives Fredholmness, 
                or $ \mathrm{ord}(v_{k_1}\cdots v_{k_n}a v_{-k_n}\cdots v_{-k_1})<\mathrm{ord}(a)$ in which case $L^2(\pi)\xrightarrow{\pi_0(v_{k_1}\cdots v_{k_n}a v_{-k_n}\cdots v_{-k_1})} L^2(\pi)$ is invertible.
                By iterated application of our continuity hypothesis \eqref{eqn:continuity_hypothesis} and the hypothesis $v_0$ is invertible, they all have the same Fredholm index as $L^2(\pi)\xrightarrow{ \pi_0(a)}L^2(\pi)$. So, their Fredholm index vanishes.
                
                Let $V=\ker(L^2(\pi)\xrightarrow{ \pi_0(a)}L^2(\pi))$.
                Since $H^k(\pi)\subseteq H^l(\pi)$ if $l\leq k$ and $V$ is finite dimensional, there exists $l\in \R_+^\nu$ such that 
					\begin{equation*}\begin{aligned}
						V\cap H^l(\pi)=\bigcap_{k\in \R^\nu_+}V\cap H^k(\pi)=V\cap C^\infty(\pi),
					\end{aligned}\end{equation*}
                where the second equality follows from \eqref{eqn:Cinfinty_intersection}.
				By our assumption, $\pi_0(a)$ is injective on $C^\infty(\pi)$, that is $V\cap C^\infty(\pi)=0$. 
				So, $V\cap H^l(\pi)=0$.
                Fix $k_1,\cdots,k_n\in K_+$ such that $k_1+\cdots+k_n=l$.
                The map $L^2(\pi)\xrightarrow{\pi_0(v_{k_1}\cdots v_{k_n}a v_{-k_n}\cdots v_{-k_1})} L^2(\pi)$ is equal to the composition
                \begin{equation}\label{eqn:sqdjfjlmsqdkjflmqsd}\begin{aligned}
                            L^2(\pi)\xrightarrow{\pi_{-k_1}(v_{-k_1})}H^{k_1}(\pi) \cdots H^{l}(\pi)\xrightarrow{\pi_0(a)}H^l(\pi) \cdots  \xrightarrow{\pi_{k_1}(v_{k_1})}L^2(\pi)
                    \end{aligned}\end{equation}
                The exterior maps are topological isomorphisms by \Cref{lem:step14}.
                The interior map is injective because $V\cap H^l(\pi)=0$.
                So, it follows that $L^2(\pi)\xrightarrow{\pi_0(v_{k_1}\cdots v_{k_n}a v_{-k_n}\cdots v_{-k_1})} L^2(\pi)$ is injective.
                Hence, it is bijective (because it is Fredholm of index $0$).
                So, each map in \eqref{eqn:sqdjfjlmsqdkjflmqsd} is bijective.
                So, $H^{l}(\pi)\xrightarrow{\pi_0(a)}H^l(\pi)$ is surjective.
                Therefore, $L^2(\pi)\xrightarrow{\pi_0(a)} L^2(\pi)$ has dense image because $C^\infty(\pi)\subseteq H^l(\pi)\subseteq L^2(\pi)$. A Fredholm operator of Fredholm index $0$, whose image is dense, is invertible. 
				Hence, $L^2(\pi)\xrightarrow{\pi_0(a)} L^2(\pi)$ is invertible.
                By \Cref{prop:simple_arg}, $a$ is invertible in $\overline{\cA}_0/I_{\mathrm{ord}(a)-1}$ which is a contradiction.
		 \end{proof}
             We now prove that if $k\in \C^\nu$, $a\in \cA_k$ satisfies $C^\infty(\pi)\xrightarrow{\pi_k(a)} C^\infty(\pi)$ and $C^\infty(\pi)\xrightarrow{\pi_{\bar{k}}(a^*)} C^\infty(\pi)$ are injective for any $\pi\in \Sigma(\cA)$, then \eqref{eqn:skqdjfmsqmjdfjsmjmfqsfd} are topological isomorphisms for any representation $\pi$ of $(\cA)_{k\in \C^\nu}$.
            We need to show that \eqref{eqn:skqdjfmsqmjdfjsmjmfqsfd} have continuous left and right inverses.
            So, by replacing $a$ with $a^*a$ and $aa^*$, we can suppose that $k\in \R^\nu$ and $a=a^*$.
            Let $l\in \R^\nu$.
            We fix any elements $k_1,\cdots,k_n,k_1',\cdots,k_m'\in K_\R$
            such that $k_1+\cdots+k_n=l-k$ and $k_1'+\cdots+k_m'=-l$.
			By \Cref{lem:step2} and \Cref{lem:step14}, $v_{k_1}\cdots v_{k_n}a v_{k_1'}\cdots v_{k_m'}\in \cA_0$ is left-invertible in $\overline{\cA_0}$ for any $l\in \R^\nu$.
			By the same argument applied to the adjoint, we deduce that it is invertible.
            Now, let $\pi$ be a representation of $(\cA)_{k\in \C^\nu}$.
			So, $L^2(\pi)\xrightarrow{\pi_0(v_{k_1}\cdots v_{k_n}a v_{k_1'}\cdots v_{k_m'})} L^2(\pi)$ is invertible.
            By factorizing this map and using \Cref{lem:step14}, it follows that $ H^{l}(\pi) \xrightarrow{\pi_{k}(a)}H^{l-k}(\pi)$ is a topological isomorphism.
            By taking the limit as $l\to \infty$ and using \eqref{eqn:Cinfinty_intersection},  $ C^\infty(\pi)\xrightarrow{\pi_{k}(a)}C^\infty(\pi)$ is a topological isomorphism.
            By duality, $ C^{-\infty}(\pi)\xrightarrow{\pi_{k}(a)}C^{-\infty}(\pi)$ is a topological isomorphism.

            We now show that for any $k\in \C^\nu$, then there exists $a\in \cA_k$ such that the maps \eqref{eqn:skqdjfmsqmjdfjsmjmfqsfd} are topological isomorphisms for any representation $\pi$ of $(\cA)_{k\in \C^\nu}$.
            By what we proved it suffices to have that $C^\infty(\pi)\xrightarrow{\pi_k(a)} C^\infty(\pi)$ and $C^\infty(\pi)\xrightarrow{\pi_k(a^*)} C^\infty(\pi)$ are injective for any $\pi\in \Sigma(\cA)$.
            We write $k$ as sum of $k_1+\cdots+k_n$ with $k_i\in K$, and take $a=v_{k_1}\cdots v_{k_n}$.
            Since by hypothesis (see the beginning of this proof, we also replaced $k$ with $-k$) $v_{-k}v_{k}$ is invertible in $\overline{\cA_0}$ for any $k\in K$, it follows that $\pi_k(v_k)$ is injective on $L^2(\pi)$ and in particular on $C^\infty(\pi)$ for any $\pi \in \Sigma(\cA)$.
            Invertibility of $v_{-k}v_{k}$ implies invertibility of its adjoint. So by the same argument $\pi_{\overline{k}}(v_k^*)$ is injective on $C^\infty(\pi)$.
                        We now show that $C^{-\infty}(\pi)=\bigcup_{k\in \R^\nu} H^k(\pi)$ for any representation $\pi$ of $(\cA)_{k\in \C^\nu}$. 
                        Let $\xi\in C^{-\infty}(\pi)$. By the definition of the topology on $C^{\infty}(\pi)$, there
                        exists $C>0$, and $a_1\in \cA_{k_1},\cdots,a_n\in \cA_{k_n}$ such that 
                            \begin{equation*}\begin{aligned}
                                |\langle \eta,\xi\rangle| \leq C\sum_{i=1}^n \norm{\pi_{k_i}(a_i)\eta}_{L^2(\pi)},\quad \forall\eta\in C^\infty(\pi)      
                            \end{aligned}\end{equation*}
                        Let $k\in \R^\nu$ such that $k_i\leq k$ for all $i$, and $w\in \cA_k$ such that \eqref{eqn:skqdjfmsqmjdfjsmjmfqsfd} are topological isomorphisms. So, $H^k(\pi)\xrightarrow{\pi_k(w)}L^2(\pi)$ is a topological isomorphism. 
                        Hence,
                        \begin{equation}\label{eqn:qjsidofjmosqjdifjiqsmdof}\begin{aligned}
                                |\langle \eta,\xi\rangle| \leq C'\norm{\pi_{k}(w)\eta}_{L^2(\pi)},\quad \forall\eta\in C^\infty(\pi)      
                            \end{aligned}\end{equation}
						The map $C^{-\infty}(\pi)\xrightarrow{\pi_{k}(w)} C^{-\infty}(\pi)$ is a topological isomorphism.
                        So, there exists $\xi'\in C^{-\infty}(\pi)$ such that $\pi_k(w)\xi'=\xi$.
                        By \eqref{eqn:qjsidofjmosqjdifjiqsmdof} together with surjectivity $C^\infty(\pi)\xrightarrow{\pi_k(w)} C^\infty(\pi)$, we deduce that $\xi'\in L^2(\pi)$.
                        So, $\xi\in H^{-k}(\pi)$.\qedhere

	                  %  It remains to show that if $l\leq k$, then $H^k(\pi)\subseteq H^l(\pi)$. This is obvious from the equality with $H^l(\pi)$ and $H^k(\pi)$.\qedhere
					%	By similar argument, we deduce that $\dbtilde{H}^k(\pi)=H^k(\pi)$. We leave this to the reader.		\end{enumerate}
					%We will prove by inverse recurrence that for each $$
	\end{proof}

	\begin{ques}\label{ques:thm_simp_on_non_type_I}
		Does \Cref{thm:simple_arg_sobolev} hold without $\overline{\cA_0}$ being necessarily of Type \RNum{1}?
	\end{ques}
\section{Helffer-Nourrigat's theorem}\label{sec:Helffer_Nourrigat}
    The material in this section is mostly folklore (mostly due to Glowski, Goodman, Helffer, Knapp, Nourrigat, Stein, Taylor \cite{Glowacki2,HelfferRockland,GoodmanBoundedL2,TaylorBook,KnappStein1} and their collaborators) except for the proof of \MyCref{thm:Helffer_Nourrigat}.
   % If $c\in \cC_k$ for any $k\in \R^{\nu_1+\nu_2}$, then $c$ acts on $L^2(\pi,\pi')$ which is bounded if $k\leq 0$.

    %We can then take $C^\infty(\pi,\pi'):=C^\infty(\pi)\otimes C^\infty(\pi')$ the algebraic tensor product of vector spaces.
    %The tensore product calculus $(\cA\otimes \cB)_{(k,l)}_{(k,l)\in \R^{\nu_1+\nu_2}}$ is defined by setting $\cA\otimes \cB_{(k,l)}=\cA_k\otimes \cB_l$ for any $(k,l)\in \R^{\nu_1+\nu_2}$.
        A graded nilpotent Lie algebra of depth $N\in \N$ is a graded vector space $\mathfrak{g}=\mathfrak{g}^1\oplus\cdots \oplus\mathfrak{g}^N$ equipped with a Lie algebra structure such that for every $i,j\in \{1,\cdots,N\}$ $$[\mathfrak{g}^i,\mathfrak{g}^j]\subseteq \begin{cases}\mathfrak{g}^{i+j}, &\text{if }i+j\leq N\\0&\text{if not}\end{cases}$$
        Since $\mathfrak{g}$ is nilpotent, the Baker-Campbell-Hausdorff (BCH) formula 
    \begin{equation}\label{eqn:bch}
        g\cdot h:=g+h-\frac{1}{2}[g,h]+\frac{1}{12}[g,[g,h]]-\frac{1}{12}[h,[h,g]]+\cdots
    \end{equation} 
    is a finite sum.   Our BCH formula differs by a sign from the one used in group theory, i.e., our formula is the usual formula for the opposite Lie algebra. 
        Hence, if $X_g$ is the \textit{right} invariant vector field on $\mathfrak{g}$ such that $X_g(0)=g$, then $[X_g,X_h]=X_{[g,h]}$.
    
        The space $\mathfrak{g}$ equipped with the product given by the BCH formula is a simply connected Lie group integrating $\mathfrak{g}$.
    We treat $\mathfrak{g}$ as both a Lie group and a Lie algebra through the article.
    Whenever we want to stress on the Lie group aspect, we will use the notation $G$ instead.
    The inverse of $g\in \mathfrak{g}$ will be denoted by $-g$ and $g^{-1}$ depending on context. 
    A Lebesgue measure on $\mathfrak{g}$ is also a Haar measure on $G$.
    We fix such a measure and denote it by $\odif{g}$.
    
    Let $ C^{-\infty}(\mathfrak{g})$ be the topological anti-linear dual of $C^\infty_c(\mathfrak{g})$, and $C^{-\infty}_c(\mathfrak{g})$ the subspace of compactly supported distributions.
    The action of a distribution $u$ on a function $f$ will be denoted by $ \langle f, u \rangle $.
    In other words, we will only use the inner product $\langle \phi,\psi\rangle=\int \bar{\phi}\psi$, where $\phi,\psi\in C^\infty_c(\mathfrak{g})$.

   %\paragraph{$C^n$-norm:} If $u\in C^\infty_c(\mathfrak{g})$, then we after choosing some linear basis of $\mathfrak{g}$, we can define the $C^n$-norm $\norm{u}_{C^n}$.
    %For the rest of this section, if we use the $C^n$-norm, then we will implicitly suppose that a choice linear basis has been made. We will further suppose that the basis are chosen in such a way that if $g$ is a basis element, then $g\in \mathfrak{g}^i$ for some $i\in \{1,\cdots,N\}$.

   \paragraph{Convolution:} If $u,v\in C^{-\infty}(\mathfrak{g})$, then we define the convolution and transpose using the group law 
       \begin{equation*}\begin{aligned}
            u\ast v(g)=\int u(gh^{-1})v(h)\odif{h},\quad u^t(g)=u(g^{-1}),\quad u^*=\overline{u^t}.        
       \end{aligned}\end{equation*}
    The convolution is well-defined if either $u$ or $v$ has compact support.
   The element $\delta$ denotes the Dirac delta distribution at $0$ which is the unit of the convolution product.
   
   %\paragraph{Schwartz-kernel theorem} A variation of Schwartz kernel theorem states that any continuous linear operator $T:C^{-\infty}_c(\mathfrak{g})\to C^{-\infty}(\mathfrak{g})$ which satisfies 
   
  %  is given by convolution with a distribution $u\in C^{-\infty}(\mathfrak{g})$, i.e., $T(f)=u\ast f$ for all $f\in C^\infty_c(\mathfrak{g})$. 
\paragraph{Right-invariant differential operators:} 
Let $D:C^\infty(\mathfrak{g})\to C^\infty(\mathfrak{g})$ be a right-invariant differential operator on $G$. 
We denote by $\delta_D\in C^\infty_c(\mathfrak{g})$ the unique distribution which satisfies $ \delta_D\ast f=D(f)$ for all $f\in C^\infty(\mathfrak{g})$.
   Let $D^*$ be the formal adjoint of $D$ which is also right-invariant. One can check that
          \begin{equation*}\begin{aligned}
 f\ast \delta_D=D^*(f),\quad \delta_{D_1}\ast \delta_{D_2}=\delta_{D_1D_2},\quad \delta_D^*=\delta_{D^*}.
    \end{aligned}\end{equation*}
The map $D\mapsto \delta_D$ is a $*$-algebra isomorphism between the $*$-algebra of right-invariant differential operators on $\mathfrak{g}$ and the $*$-subalgebra of $C^{-\infty}_c(\mathfrak{g})$ of distributions supported in $\{0\}$

\paragraph{Dilations:}
We define the dilation $$\alpha_\lambda:\mathfrak{g}\to \mathfrak{g},\quad  \alpha_\lambda\left(\sum_{i=1}^Ng_i\right)=\sum_{i=1}^N\lambda^ig_i,\quad \lambda\in \R_+.$$
The map $\alpha_\lambda$ is a Lie algebra and a Lie group homomorphism. The dilation action on distributions is defined by 
\begin{equation}\label{eqn:dil_dis}
    \alpha_\lambda: C^{-\infty}(\mathfrak{g})\to  C^{-\infty}(\mathfrak{g}),\quad 
    \langle \phi,\alpha_\lambda(u)\rangle=\langle \phi \circ \alpha_\lambda,u\rangle,\quad 
    \phi\in C^\infty_c(\mathfrak{g}).
\end{equation}
So, if $u\in C^\infty(\mathfrak{g})$, then 
    \begin{equation}\label{eqn:dilation_Q}\begin{aligned}
        \alpha_\lambda(u)=\lambda^{-Q} u\circ \alpha_{\lambda^{-1}}, \quad Q=\sum_{i=1}^Ni\dim(\mathfrak{g}^i).
    \end{aligned}\end{equation}
Hence, if $u\in C^\infty_c(\mathfrak{g})$, then  \begin{equation}\label{eqn:norm_dilations}\begin{aligned}
        \supp(\alpha_{\lambda}(u))\subseteq \alpha_\lambda(\supp(u)),\quad \norm{\alpha_{\lambda}(u)}_{L^1}=\norm{u}_{L^1},\quad \forall \lambda\in \R_+^\times.
    \end{aligned}\end{equation}
    It is useful to note that 
        \begin{equation}\label{eqn:limit_delta_0}\begin{aligned}
            \lim_{\lambda\to 0^+}\alpha_\lambda(u)=\langle 1,u\rangle \delta   ,\quad \forall u\in C^{-\infty}_c(\mathfrak{g}),    
        \end{aligned}\end{equation}
        where the limit is in $C^{-\infty}_c(\mathfrak{g})$.

        Notice that $P\in C^\infty(\mathfrak{g})$ is homogeneous as a distribution of degree $-n-Q$ if and only if  $P\circ \alpha_\lambda=\lambda^n P$.
   Furthermore, $P$ is necessarily a polynomial because of smoothness at $0$.
%Furthermore, if $\lambda\geq  1$, then 
%    \begin{equation}\label{eqn:norm_dilations_Cn}\begin{aligned}
%      \norm{\alpha_{\lambda}(u)}_{C^n}\leq \lambda^{-Q}\norm{u}_{C^n},\quad    \forall n\in \N, u\in C^\infty_c(\mathfrak{g}).
%    \end{aligned}\end{equation}

 \paragraph{Homogeneous differential operators:} A right-invariant differential operator $D$ is said to be homogeneous of degree $k$ if $\delta_D$ is homogeneous of degree $k$, i.e., if \begin{equation}
    D(\phi\circ \alpha_\lambda)=\lambda^kD(\phi)\circ\alpha_\lambda\quad  \forall \phi\in C^\infty_c(\mathfrak{g}),
   \end{equation} 
   We denote by $\DO^k(\mathfrak{g})$ the space of right-invariant differential operators homogeneous of degree $k$.
%Let $n\in \N$. 

   Let $n\in \N$.
    We denote by $C^\infty_{c,n}(\mathfrak{g})$ the subspace of $u\in C^\infty_c(\mathfrak{g})$ such that $\int Pu=0$ for any polynomial $P\in C^\infty(\mathfrak{g})$ which is homogeneous of degree $-m-Q$ with $m<n$.
    By convention $C^\infty_{c,0}(\mathfrak{g}):=C^\infty_c(\mathfrak{g})$.
%We denote by $C^\infty_{c,n}(\mathfrak{g})$ the subspace of $C^\infty_c(\mathfrak{g})$ of 
  %elements which can be written as sum of elements of the form $D(u)=\delta_D\ast u$ where 
  %$u\in C^\infty_c(\mathfrak{g})$ and $D\in \DO^{m}(\mathfrak{g})$ with $m\geq n$. We extend the definition to $n=0$ by $C^\infty_{c,0}(\mathfrak{g}):=C^\infty_{c}(\mathfrak{g})$.
  %The space $C^\infty_{c,n}(\mathfrak{g})$ is the graded version of the space of functions whose Fourier transform vanishes to order $n$ at $0$ in the Euclidean case.
  %The set $C^\infty_{c,n}(\mathfrak{g})$ is equipped with the subspace topology from $C^\infty_c(\mathfrak{g})$. 
  %It can be verified that $C^\infty_{c,n}(\mathfrak{g})$ is a closed subspace of $C^\infty_c(\mathfrak{g})$. 
  %In fact one has the following classical result.

    %We say that a polynomial $P\in C^\infty(\mathfrak{g})$ is homogeneous of degree $n$ if 
    %  $P\circ \alpha_\lambda=\lambda^n P$.
   
    %  $P\circ \alpha_\lambda=\lambda^n P$.
\begin{prop}\label{prop:sum_D_inv} 
    For any $n\in \N$, $u\in C^\infty_{c,n}(\mathfrak{g})$ if and only if $u=\sum_i \delta_{D_i}\ast v_i$ where 
  $v_i\in C^\infty_c(\mathfrak{g})$ and    $D_i\in \DO^{n_i}(\mathfrak{g})$ is homogeneous of degree $n_i\geq n$.
    %$u \in C^\infty_c(\mathfrak{g})$, $n\in \N$, then, $u\in C^\infty_{c,n}(\mathfrak{g})$ if and only if 
  %$\langle P,u\rangle=0$ for all 
  %$P\in C^\infty(\mathfrak{g})$ homogeneous of degree $-m-Q$ with $m<n$. 
  %In particular $u \in C^\infty_{c,1}(\mathfrak{g})$ if and only if $\langle 1,u\rangle=0$.
\end{prop}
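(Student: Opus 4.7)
I would prove the two directions separately, with induction on $n$ for the nontrivial direction.

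The easy direction ($\Leftarrow$) is immediate: if $u = \sum_i \delta_{D_i}\ast v_i$ with $D_i \in \DO^{n_i}(\mathfrak{g})$ and $n_i \geq n$, then for any polynomial $P$ of graded degree $m<n$, integration by parts gives $\int P\,u = \sum_i \int (D_i^* P)\,v_i$. Each $D_i^*$ lowers polynomial graded degree by $n_i$, so $D_i^* P$ has graded degree $m-n_i<0$ and is therefore zero.

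For the hard direction ($\Rightarrow$) I induct on $n$; the case $n=0$ is trivial since $u=\delta\ast u$. Given $u\in C^\infty_{c,n+1}(\mathfrak{g})\subseteq C^\infty_{c,n}(\mathfrak{g})$, the induction hypothesis decomposes $u$ as a finite sum which, after fixing a basis $\{D_\alpha\}$ of $\DO^n(\mathfrak{g})$ and collecting the degree-$n$ contributions, takes the form $u = u_{\geq n+1} + \sum_\alpha \delta_{D_\alpha}\ast w_\alpha$ with $w_\alpha \in C^\infty_c(\mathfrak{g})$ and all operator degrees in $u_{\geq n+1}$ at least $n+1$. The easy direction shows $u_{\geq n+1}\in C^\infty_{c,n+1}$, hence so is $\sum_\alpha \delta_{D_\alpha}\ast w_\alpha$. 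Testing against a polynomial $P$ of graded degree exactly $n$, the same computation as in the easy direction reduces to $\sum_\alpha (D_\alpha^* P)\int w_\alpha = 0$, where $D_\alpha^* P$ is now a scalar. Nondegeneracy of the natural pairing $\DO^n(\mathfrak{g})\times\{\text{polynomials of graded degree }n\}\to\C$, $(D,P)\mapsto D^*P$ (via PBW and matching of leading symbols with the constant-coefficient operators), then forces $\int w_\alpha=0$ for every $\alpha$. Granting the sub-lemma below, I write each $w_\alpha = \sum_j X_{e_j}\,z_{\alpha j}$ with $z_{\alpha j}\in C^\infty_c(\mathfrak{g})$, so $\delta_{D_\alpha}\ast w_\alpha = \sum_j \delta_{D_\alpha X_{e_j}}\ast z_{\alpha j}$ with $\deg(D_\alpha X_{e_j})=n+d_j\geq n+1$, completing the induction.

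The sub-lemma required is: any $w\in C^\infty_c(\mathfrak{g})$ with $\int w=0$ admits a decomposition $w=\sum_j X_{e_j}z_j$ with $z_j\in C^\infty_c(\mathfrak{g})$, where $\{e_j\}$ is a fixed graded basis of $\mathfrak{g}$ and $X_{e_j}$ is the corresponding right-invariant vector field of degree $d_j\geq 1$. I would prove it in two steps. First, viewing $\mathfrak{g}\cong \R^d$ as a plain vector space, a standard induction on $d$ using one-dimensional primitives $V(x)=\int_{-\infty}^x w(t)\,dt$ (compactly supported precisely because of the vanishing mean) together with a cutoff $\phi\in C^\infty_c(\R)$ with $\int\phi=1$ yields $w=\sum_i \partial_{e_i}V_i$ with $V_i\in C^\infty_c(\mathfrak{g})$, where $\partial_{e_i}$ is the constant-coefficient derivative. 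Second, the BCH formula gives $X_{e_i}=\partial_{e_i}+\sum_k c_{ik}(g)\partial_{e_{j_k}}$ with $d_{j_k}>d_i$ and $c_{ik}$ a polynomial of graded degree $d_{j_k}-d_i$; iterating on the finite depth $N$ inverts this into $\partial_{e_i}=X_{e_i}+\sum_k \tilde c_{ik}(g)X_{e_{j_k}}$ with the analogous degree constraints. The crucial observation is then a graded-degree count: $X_{e_{j_k}}\tilde c_{ik}$ is a polynomial of graded degree $(d_{j_k}-d_i)-d_{j_k}=-d_i<0$, hence zero, which gives the exact identity $\tilde c_{ik}\,X_{e_{j_k}}V_i=X_{e_{j_k}}(\tilde c_{ik}V_i)$ with $\tilde c_{ik}V_i\in C^\infty_c(\mathfrak{g})$. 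Substituting into $w=\sum_i\partial_{e_i}V_i$ produces the desired form.

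The main obstacle is the sub-lemma, and specifically reconciling the flat derivatives $\partial_{e_i}$ (natural for the elementary primitive construction) with the right-invariant fields $X_{e_j}$ (required by the conclusion). The homogeneity/degree bookkeeping sketched above is what makes this reconciliation exact — it forces the commutator error $(X_{e_{j_k}}\tilde c_{ik})V_i$ to vanish identically rather than merely being absorbable, so we never leave $\sum_j X_{e_j}\,C^\infty_c(\mathfrak{g})$.
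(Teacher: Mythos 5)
Your proposal is correct and follows essentially the same route as the paper: the flat divergence decomposition of mean-zero compactly supported functions, the conversion of constant-coefficient derivatives into right-invariant vector fields using the graded structure (your degree count showing $X_{e_{j_k}}\tilde c_{ik}=0$ is the same mechanism as the paper's observation that the correction coefficients depend only on lower strata), and an induction on $n$ driven by the nondegenerate pairing between $\DO^n(\mathfrak{g})$ and homogeneous polynomials. The only differences are organizational (the paper subtracts a fixed bump function $w$ with $\int w=1$ and then kills the leftover $D(w)$ terms by duality, whereas you test directly against degree-$n$ polynomials to conclude $\int w_\alpha=0$), and both are sound.
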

\begin{proof}
    The implication $\impliedby$ is trivial. Let us prove the other implication.
 %The forward implication follows from the fact that if $P\in C^\infty(\mathfrak{g})$ satisfies $P\circ \alpha_\lambda=\lambda^m P$ with $m<0$, then $P=0$.
 % We now prove the other implication. 
 The following lemma is classical.
  \begin{lem}\label{lem:aux_4}
      If $u\in C^\infty_c(\R^n)$ with $\int_{\R^n} u=0$, 
      then there exists $u_1,\cdots,u_n\in C^\infty_c(\R^n)$ such that $u=\sum_{i=1}^n\frac{\partial u_i}{\partial x_i}$.
  \end{lem}
  \begin{lem}\label{lem:aux_5}
      For each $g\in \mathfrak{g}$, let $Y_g$ the constant vector field on $\mathfrak{g}$ which is equal to $g$, and 
      $X_g$ the right-invariant vector field which is equal to $g$ at $0$. If $g\in \mathfrak{g}^i$, then $X_g-Y_g$ is a sum of elements of the form $PY_h$, where $h\in \mathfrak{g}^j$ with $j> i$,
      $P$ a polynomial on $\mathfrak{g}$ which only depends on $\oplus_{k=1}^{j-1}\mathfrak{g}^k$. In particular, $Y_h(P)=0$.
  \end{lem}
  \begin{proof}
      This follows immediately from the BCH formula, because $X_g(h)=\frac{d}{dt}\Bigr|_{t=0} (tg)\cdot h=\sum_{k=0}^{+\infty}c_k\ad_{h}^k(g)$ for some coefficients $c_k\in \Q$.
  \end{proof}
  %We now prove the case $n=1$ of \Cref{prop:sum_D_inv}. 
  Let $u\in C^\infty_{c,1}(\mathfrak{g})$. 
  \Cref{lem:aux_4} implies that $u$ is a sum of elements of the form $Y_g(v)$ with $v\in C^\infty_c(\mathfrak{g})$ and $g\in \mathfrak{g}^i$ for some $i$.
  By \Cref{lem:aux_5}, we can replace $Y_g(v)$ by $X_g(v)$ and the sum of elements of the form $PY_h(v)=Y_h(Pv)$ with $h\in \mathfrak{g}^j$ for $j>i$.
  By iterating this procedure, the result follows.
  For general $n\in \N$, we proceed by induction.
  Let $n\geq 2$ and $u\in  C^\infty_c(\mathfrak{g})$ such that $\int Pu=0$ for all 
  $P\in C^\infty(\mathfrak{g})$ polynomial which satisfies $P\circ \alpha_\lambda=\lambda^m P$ for some $m<n$. 
  By applying the induction hypothesis, we can write $u$ as sum of elements of the form $D(v)$ with $D\in \DO^{m}(\mathfrak{g})$ with $m\geq n-1$.
  We fix $w\in C^\infty_c(\mathfrak{g})$ such that $\int w=1$. We write each term $D(v)$ as $D(v-\langle 1,v\rangle w)+\langle 1,v\rangle D(w)$.
  By the case $n=1$ applied to $v-\langle 1,v\rangle w$, we can write $u$ as sum of an element of the form $D(v)$ with $D$ homogeneous of degree $\geq n$ and elements of the form $D(w)$ with $D$ homogeneous of degree $\leq n-1$. We now fix a basis of $\oplus_{m\leq n-1}\DO^{m}(\mathfrak{g})$. 
    Since this space is dual to the space polynomials which satisfy $P\circ \alpha_\lambda=\lambda^m P$ for some $m<n$, we deduce that for $\langle P,u\rangle=0 $ to vanish for all such $P$, all the terms
  of the form $D(w)$ have to vanish.
\end{proof}
The space $C^\infty_{c,n}(\mathfrak{g})$ is clearly closed under taking adjoint, because if $P$ is homogeneous of degree $-m-Q$, then $P^*$ is also homogeneous of degree $-m-Q$. 
So, in \Cref{prop:sum_D_inv}, one can replace $\delta_D\ast u$ by $u\ast \delta_D$.
%Furthermore, since $\int u\ast v=(\int u) (\int v)$, $C^\infty_{c,1}(\mathfrak{g})$ is a two-sided ideal of $C^\infty_c(\mathfrak{g})$.
From this, one deduces the following:
          \begin{equation}\label{eqn:adjoint_Cnc}\begin{aligned}
           C^\infty_{c,n}(\mathfrak{g})^*=C^\infty_{c,n}(\mathfrak{g}),\quad             C^\infty_{c,n}(\mathfrak{g})\ast C^\infty_{c,m}(\mathfrak{g})\subseteq C^\infty_{c,n+m}(\mathfrak{g}),\quad \forall n,m\in \Z_+.
         \end{aligned}\end{equation}
    %\item Any element of $C^\infty_{c,n}(\mathfrak{g})$ can be written sum of elements of the form $D'(\phi)$ where 
    %$\phi\in C^\infty_c(\mathfrak{g})$ and $D$ is a right-invariant differential operator homogeneous of degree $\geq n$.

\paragraph{Unitary representations and distributions:} 
Let $\pi$ be a unitary representation of $G$, $C^\infty(\pi)\subseteq L^2(\pi)$ the subspace of smooth vectors, i.e., the subset of $\xi\in L^2(\pi)$ such that the map $g\in G\mapsto \pi(g)\xi \in L^2(\pi)$ is Fréchet smooth. 
Let $D$ be a right-invariant differential operator on $\mathfrak{g}$, $\xi \in C^\infty(\pi)$. We define $\pi(D)\xi \in L^2(\pi)$ to be $D(g\mapsto\pi(g)\xi)$ evaluated at the $g=0$. 
If $\xi$ is smooth, then $\pi(D)\xi$ is smooth.
So, $\pi(D)\in \mathrm{End}(C^\infty(\pi))$.
 One can directly check that 
    $\pi(D_1D_2)=\pi(D_1)\pi(D_2)$
 for all $D_1,D_2$ right-invariant differential operators.
We equip $C^\infty(\pi)$ with the topology generated by all the seminorms 
$       \norm{\pi(D)\xi}_{L^2(\pi)}$.
This topology makes $C^\infty(\pi)$ a Fréchet space. 
%To reduce to a countable number of seminorms, one fixes a linear basis of $\mathfrak{g}$ and use differential operators generated by vector fields from that basis. 
We denote by $ C^{-\infty}(\pi)$ the space of topological anti-linear dual of $C^\infty(\pi)$ equipped with the weak topology.
\begin{ex}\label{ex:left_regular}
    Let $\Theta$ be the left-regular representation of $G$ on $L^2(G)$.
   By local Sobolev embedding theorems, $C^\infty(\Theta)$ is the space of $f\in C^\infty(G)$ such that for any $D\in \DO(\mathfrak{g})$, $D(f)\in L^2(G)$.
   This example will be of particular importance in the end of the article.
\end{ex}

\paragraph{Extension to distributions:}
Let $\xi\in C^\infty(\pi)$ and $u\in C^{-\infty}_c(\mathfrak{g})$.
If $\eta\in L^2(\pi)$, then $g\in G\mapsto \langle \eta,\pi(g)\xi\rangle\in \C$ is a smooth function.
We define $\pi(u)\xi\in L^2(\pi)$ by applying the Riesz representation theorem to the functional
    \begin{equation*}\begin{aligned}
        \langle \eta,\pi(u)\xi\rangle:=\langle g\mapsto \langle \pi(g)\xi,\eta\rangle ,u\rangle,\quad \forall \eta\in L^2(\pi)
    \end{aligned}\end{equation*}
The fact that the functional $\eta\mapsto \langle g\mapsto \langle \pi(g)\xi,\eta\rangle ,u\rangle$ is bounded follows immediately from continuity of $u$ as an anti-linear functional $C^\infty_c(\mathfrak{g})\to \C$. One can check that
    \begin{equation*}\begin{aligned}
        \pi(\delta_D)=\pi(D),\quad \pi(u)C^\infty(\pi)\subseteq C^\infty(\pi),\quad        \pi(u_1\ast u_2)=\pi(u_1)\pi(u_2),\quad \langle \pi(u)\xi,\eta\rangle=\langle \xi,\pi(u^*)\eta\rangle,
    \end{aligned}\end{equation*}
    where $\xi,\eta\in C^\infty(\pi)$ and $D$ is a right-invariant differential operator.
The map $C^\infty(\pi)\xrightarrow{\pi(u)} C^\infty(\pi)$ is continuous. By duality, it extends to a continuous map $C^{-\infty}(\pi)\xrightarrow{\pi(u)} C^{-\infty}(\pi)$.
If $u\in C^\infty_c(\mathfrak{g})$, then $\pi(u)C^{-\infty}(\pi)\subseteq C^\infty(\pi)$. Furthermore,
    \begin{equation}\label{eqn:L2pi}\begin{aligned}
         \pi(u)\xi=\int_\mathfrak{g}u(g)\pi(g)\xi\odif{g},\quad \forall \xi\in L^2(\pi), u\in C^{\infty}_c(\mathfrak{g})
    \end{aligned}\end{equation}
 which implies that 
%\begin{equation}\label{eqn:pi_xi_smooth}
%,\quad \xi\in L^2(\pi).
%\end{equation}
%One easily sees that $\pi(u)\xi\in C^\infty(\pi)$ for all $\xi\in L^2(\pi)$. 
%Furthermore, the map $\pi(u):L^2(\pi)\to C^\infty(\pi)$ is continuous. One has
 \begin{equation}\label{eqn:identities_pi_of_composition_norm_of_pi_adjoint}
  \norm{\pi(u)}_{\cL(L^2(\pi))}\leq \norm{u}_{L^1}.
\end{equation}

  \paragraph{$C^*$-calculus:} Let $k\in \C$. The space $\Rd{k}$ denotes the space of distributions $u\in  C^{-\infty}_c(\mathfrak{g})$ which satisfy \begin{equation}\label{eqn:homog_cond_ek}
 \alpha_\lambda(u)-\lambda^ku\in C^\infty_c(\mathfrak{g}),\quad \forall \lambda\in \Rpt.
\end{equation}
For example $\delta\in\Rd{0}$, and if $D\in \DO^k(\mathfrak{g})$, then $\delta_D\in \Rd{k}$.
One has
    \begin{equation*}\begin{aligned}
      \Rd{k}\ast \Rd{l}\subseteq  \Rd{k+l},\quad \Rd{k}^*=\Rd{\overline{k}},\quad \forall k,l\in \C.        
    \end{aligned}\end{equation*}
If $u\in \Rd{k}$, then $\mathrm{singsupp}(u)\subseteq \{0\}$. To see this, notice that by \eqref{eqn:homog_cond_ek}, the singular support of $u$ is a compact subset of $\mathfrak{g}$ which is invariant under $\Rpt$ action on $\mathfrak{g}$. So, $\singsupp(u)\subseteq \{0\}$.
%The following proposition can be easily extended to any $k\in \C$. We only need it for $k\in \C$ with $\Re(k)<1$.
\begin{prop}\label{prop:intDS}
    Let $k\in\C$, $n\in \Z_+$ such that $\Re(k)<n$.
    If $f\in C^\infty_{c,n}(\mathfrak{g})$, then 
        \begin{equation}\label{eqn:sqdklfmjqlskdfjqlsdjmfsd}\begin{aligned}
          u=  \int_0^1 \alpha_{t}(f)\frac{\odif{t}}{t^{1+k}}\in \cA_k        
        \end{aligned}\end{equation}
    Conversely, if $u\in \cA_k$, there exists $f\in C^\infty_{c,n}(\mathfrak{g})$ and $g\in C^\infty_c(\mathfrak{g})$ such that 
            \begin{equation}\label{eqn:sum_Ds}\begin{aligned}
                u=g+        \int_0^1 \alpha_{t}(f)\frac{\odif{t}}{t^{1+k}}.
            \end{aligned}\end{equation}
\end{prop}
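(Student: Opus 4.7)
My plan would be to prove the two directions separately, relying on \Cref{prop:sum_D_inv} to handle the moment-vanishing conditions.

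For the forward direction, I would apply \Cref{prop:sum_D_inv} to write $f=\sum_i \delta_{D_i}\ast v_i$ with $D_i\in \DO^{n_i}(\mathfrak{g})$ homogeneous of degree $n_i\ge n$ and $v_i\in C^\infty_c(\mathfrak{g})$. Since $\alpha_t$ is a group automorphism it commutes with convolution, and since $D_i$ is homogeneous one has $\alpha_t(\delta_{D_i})=t^{n_i}\delta_{D_i}$, which gives
\[
u=\sum_i \delta_{D_i}\ast\int_0^1 t^{n_i-1-k}\,\alpha_t(v_i)\,\odif{t}.
\]
Each inner integral converges absolutely in $C^{-\infty}_c(\mathfrak{g})$ because the family $\{\alpha_t(v_i)\}_{t\in(0,1]}$ has bounded $L^1$-norm and uniformly bounded support by \eqref{eqn:norm_dilations}, while $\Re(n_i-1-k)>-1$. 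To verify $u\in\cA_k$, the change of variables $s=\lambda t$ gives $\alpha_\lambda(u)-\lambda^k u=\lambda^k\int_1^\lambda \alpha_s(f)\,s^{-1-k}\,\odif{s}$, which is the integral of a smooth compactly supported family over a compact interval bounded away from $0$ and hence lies in $C^\infty_c(\mathfrak{g})$.

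For the converse direction, I would first reduce to the case $\supp(u)$ arbitrarily close to $0$: multiplying by $\chi\in C^\infty_c(\mathfrak{g})$ equal to $1$ near $0$, the piece $(1-\chi)u$ is smooth and compactly supported (since $\singsupp(u)\subseteq\{0\}$) and is absorbed into $g$. The main analytic input is that $R(\lambda):=\alpha_\lambda(u)-\lambda^k u$ depends smoothly on $\lambda\in\Rpt$ with values in $C^\infty_c(\mathfrak{g})$, and satisfies $R(1)=0$. Using that $\alpha_\lambda$ commutes with the infinitesimal dilation generator $\mathcal{Z}:=\frac{\odif{}}{\odif{\lambda}}\alpha_\lambda|_{\lambda=1}$, differentiation yields the Mellin-type inversion identity
\[
\frac{\odif{}}{\odif{\lambda}}\bigl(\lambda^{-k}\alpha_\lambda(u)\bigr)=\lambda^{-1-k}\,\alpha_\lambda(\mathcal{Z}u-ku),
\]
and integrating from $\lambda=\epsilon$ to $\lambda=1$ produces the formal decomposition $u = [\lim_{\epsilon\to 0}\epsilon^{-k}\alpha_\epsilon(u)] + \int_0^1 \alpha_t(f)\,t^{-1-k}\,\odif{t}$ with candidate $f=\mathcal{Z}u-ku\in C^\infty_c(\mathfrak{g})$.

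The hard part will be to arrange simultaneously that $f$ lies in the moment-vanishing subspace $C^\infty_{c,n}(\mathfrak{g})$ and that the boundary term $\lim_{\epsilon\to 0}\epsilon^{-k}\alpha_\epsilon(u)$ lies in $C^\infty_c(\mathfrak{g})$. The naive choice $f=\mathcal{Z}u-ku$ fails in general: when $u=\delta$ and $k=0$ one gets $f=0$ while the boundary term is $\delta\notin C^\infty_c(\mathfrak{g})$. The remedy will be to perturb $f$ by correction terms of the form $\delta_D\ast w$ with $D\in \DO^m(\mathfrak{g})$ and $m\ge n$; by \Cref{prop:sum_D_inv} this keeps $f\in C^\infty_{c,n}(\mathfrak{g})$ while altering the integral modulo $C^\infty_c(\mathfrak{g})$. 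Crucially, a direct expansion of $\int_0^1 \alpha_t(f)\,t^{-1-k}\,\odif{t}$ paired against a test function shows that this distribution carries both a principal-value contribution from $t$ bounded away from $0$ and a $\delta$-type contribution arising from the logarithmic regularization as $t\to 0$; the free parameters in the corrections allow one to match both against the germ of $u$ at $0$. The final check that the residual $u-\int_0^1\alpha_t(f)\,t^{-1-k}\,\odif{t}$ lies in $C^\infty_c(\mathfrak{g})$ will then follow from the fact that it is compactly supported, smooth away from $0$, and almost homogeneous of degree $k$ with vanishing principal part at $0$ by construction.
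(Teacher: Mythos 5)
Your forward direction is correct and is essentially the paper's argument: decompose $f$ via \Cref{prop:sum_D_inv}, pull each $\delta_{D_i}$ out of the integral using homogeneity so that what remains converges in $L^1$, and verify \eqref{eqn:homog_cond_ek} by the change of variables you indicate. The skeleton of your converse also matches the paper's: set $f=\odv{\alpha_t(u)-t^ku}{t}_{t=1}$, integrate the identity $\odv{t^{-k}\alpha_t(u)}{t}=t^{-1-k}\alpha_t(f)$ from $\epsilon$ to $1$, and analyse the boundary term $\lim_{\epsilon\to 0^+}\epsilon^{-k}\alpha_\epsilon(u)$. Two points in the converse need repair.

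First, the ``main analytic input'' --- smoothness of $\lambda\mapsto\alpha_\lambda(u)-\lambda^k u$ as a map $\Rpt\to C^\infty_c(\mathfrak{g})$ --- is genuinely nontrivial and cannot be assumed: the definition of $\cA_k$ only asserts that each individual difference is a smooth compactly supported function, with no regularity in $\lambda$ whatsoever. The paper devotes \Cref{lem:smoothness} to this, using a Baire category argument combined with the cocycle identity $\Psi(\lambda\lambda')=\alpha_\lambda(\Psi(\lambda'))+\lambda'^{k}\Psi(\lambda)$, Arzel\`a--Ascoli, and an averaging trick to upgrade continuity to smoothness.

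Second, and more seriously, your correction scheme cannot work. If $D\in\DO^m(\mathfrak{g})$ with $m\geq n$ and $P$ is a polynomial with $P\circ\alpha_\lambda=\lambda^{m'}P$ for some $m'<n$, then $D(P)$ is a polynomial homogeneous of negative degree, hence zero; consequently $\langle P,\delta_D\ast w\rangle=0$, and adding such terms to $f$ leaves every low moment of $f$ unchanged. But the naive $f$ satisfies $\langle P,f\rangle=(m'-k)\langle P,u\rangle$, which is nonzero as soon as $k\neq m'$ and $\langle P,u\rangle\neq 0$ (take any $u\in\cA_{1/2}$ with $\langle 1,u\rangle\neq 0$); so no perturbation of the proposed form can place $f$ in $C^\infty_{c,n}(\mathfrak{g})$. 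The correct fix is the one already encoded in the statement of the proposition: \emph{before} differentiating, replace $u$ by $u-g$ where $g\in C^\infty_c(\mathfrak{g})$ satisfies $\langle P,g\rangle=\langle P,u\rangle$ for all homogeneous $P$ of degree $-m'-Q$ with $m'\leq n$. After this single normalization $f$ lies in $C^\infty_{c,n}(\mathfrak{g})$ automatically, and the boundary term --- which by \eqref{eqn:norm_dilations} is supported at the origin and exactly homogeneous of degree $k$, hence zero if $k\notin\Z_+$ and of the form $\delta_D$ with $D\in\DO^k(\mathfrak{g})$ otherwise --- also vanishes because its moments do. Your closing appeal to matching ``principal-value'' and ``$\delta$-type'' contributions against the germ of $u$ at $0$ is not a substitute for this argument.
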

The condition $\Re(k)<n$ basically ensures that the integral defines a distribution.
\begin{proof}
    We can suppose that $f=\delta_D\ast g$ with $D\in \DO^m(\mathfrak{g})$ and $g\in C^\infty_c(\mathfrak{g})$ with $m>\Re(k)$.
    We have $\int_0^1 \alpha_{t}(f)\frac{\odif{t}}{t^{1+k}}=\delta_D\ast \int_0^1 \alpha_{t}(g)\frac{\odif{t}}{t^{1+k-m}}$.
    The second integral converges in $L^1$ by \eqref{eqn:norm_dilations}. So, $u$ is a well-defined compactly supported distribution which satisfies \eqref{eqn:homog_cond_ek}.
    We now prove the converse. 
    \begin{lem}\label{lem:smoothness}
        For any $k\in \C$, $u\in \cA_k$, the map \begin{equation}\label{eqn:qiosdjfm}
 \Psi:\Rpt\to C^\infty_c(\mathfrak{g}),\quad \Psi(\lambda)=\alpha_\lambda(u)-\lambda^ku.
\end{equation}
is smooth.
    \end{lem}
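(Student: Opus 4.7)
The plan is to exploit a natural cocycle identity satisfied by $\Psi$. Direct computation shows that for all $\mu,\lambda \in \Rpt$,
\begin{equation*}
  \Psi(\mu\lambda) \;=\; \alpha_\mu \Psi(\lambda) \;+\; \lambda^k \Psi(\mu).
\end{equation*}
Since $\Psi(\lambda_0)\in C^\infty_c(\mathfrak{g})$ for any fixed $\lambda_0$, the map $\mu \mapsto \alpha_\mu \Psi(\lambda_0)$ is manifestly smooth $\Rpt\to C^\infty_c(\mathfrak{g})$ (dilations act smoothly on test functions). Thus writing a general $\lambda$ near $\lambda_0$ as $\mu\lambda_0$ with $\mu$ close to $1$, the cocycle reduces the smoothness of $\Psi$ at $\lambda_0$ to the smoothness of $\Psi$ near $\lambda=1$. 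The problem is therefore scale-invariant.

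To handle the neighborhood of $\lambda=1$, I would work in the logarithmic coordinate $\tilde\Psi(t):=\Psi(e^t)$ and exploit the distributional one-parameter group generated by the infinitesimal dilation $\tilde Z$. A straightforward computation gives, in $C^{-\infty}_c(\mathfrak{g})$,
\begin{equation*}
  \tilde\Psi'(t) \;=\; k\,\tilde\Psi(t) \;+\; \alpha_{e^t}(v), \qquad v := \tilde Z u - k u,
\end{equation*}
with $\tilde\Psi(0)=0$. Solving this linear distributional ODE by variation of constants yields the Duhamel representation
\begin{equation*}
  \tilde\Psi(t) \;=\; e^{kt}\int_0^t e^{-ks}\,\alpha_{e^s}(v)\,\odif{s}.
\end{equation*}
If $v$ lies in $C^\infty_c(\mathfrak{g})$, then the integrand is a smooth $C^\infty_c(\mathfrak{g})$-valued function of $s$ (since dilations act smoothly on test functions), so the integral depends smoothly on $t$ with values in $C^\infty_c(\mathfrak{g})$, giving the lemma.

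The whole proof therefore reduces to showing $v = \tilde Z u - k u \in C^\infty_c(\mathfrak{g})$. Smoothness of $v$ away from $0$ is automatic from $\singsupp(u)\subseteq\{0\}$. At $g=0$ the idea is to exploit the almost-homogeneity of $u$: modulo $C^\infty_c$, $u$ is homogeneous of $\cA$-degree $k$, and the operator $\tilde Z - k$ kills exactly homogeneous-of-degree-$k$ distributions, so $v$ should be smooth at $0$ as well. To make this rigorous without invoking \Cref{prop:intDS} (whose converse we are precisely proving), I would evaluate the cocycle at $g=0$: the scalar function $h(t):=\tilde\Psi(t)(0)$ is well-defined since $\tilde\Psi(t)\in C^\infty_c$, and the cocycle forces the functional equation $h(t+s)=e^{-sQ}h(t)+e^{kt}h(s)$, whose only solutions are $h(t)=C(e^{-tQ}-e^{kt})$ (treating the resonant case $k+Q=0$ separately). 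The same argument applied after differentiating the cocycle in $g$ shows that each Taylor coefficient $(D^\alpha \tilde\Psi(t))(0)$ depends smoothly on $t$, which together with joint smoothness for $g\neq 0$ and the uniform compact support in $\lambda$ of $\Psi(\lambda)$ upgrades to joint smoothness of $(t,g)\mapsto \tilde\Psi(t)(g)$ in a neighborhood of $\{g=0\}$.

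The main obstacle I anticipate is precisely the last upgrade — passing from separate smoothness in $g$ (for each $t$) and in $t$ (for each $g$, including $g=0$ via the cocycle formula) to joint smoothness of $\tilde\Psi(t)(g)$ near $g=0$, which is what is needed for $v\in C^\infty_c(\mathfrak{g})$ and hence for smoothness in the Fréchet topology of $C^\infty_c$ rather than merely the weak topology of $C^{-\infty}_c$. I expect this to require a careful uniform estimate on the remainder in the Taylor expansion of $\tilde\Psi(t)(g)$ in the $g$-variable, controlled uniformly in $t$ on compacts — which is where the specific almost-homogeneity structure of $\cA_k$ must be used rather than just the defining property $\Psi(\lambda)\in C^\infty_c$.
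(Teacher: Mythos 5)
Your reduction is correct and pleasant: the cocycle identity, the ODE $\tilde\Psi'(t)=k\tilde\Psi(t)+\alpha_{e^t}(v)$ with $v=\tilde Z u-ku$, and the Duhamel formula are all valid identities in $C^{-\infty}_c(\mathfrak{g})$, and they show that the lemma is \emph{equivalent} to the single statement $v\in C^\infty_c(\mathfrak{g})$. But that statement is the entire content of the lemma, and your argument for it has a genuine gap exactly where you flag the "main obstacle". What you actually establish is: (i) $g\mapsto\tilde\Psi(t)(g)$ is smooth for each fixed $t$ (given); (ii) each Taylor coefficient $t\mapsto (D^\alpha\tilde\Psi(t))(0)$ is an explicit smooth function of $t$ (your functional-equation trick — which, note, in the resonant cases $k=-Q-w(\alpha)$ degenerates to the additive Cauchy equation and needs at least measurability of $h_\alpha$; this is repairable since $h_\alpha$ is a pointwise limit of the continuous functions $t\mapsto\langle\phi_\epsilon,D^\alpha\tilde\Psi(t)\rangle$, but you do not address it); (iii) joint smoothness of $(t,g)\mapsto\tilde\Psi(t)(g)$ for $g\neq 0$. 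These three facts do \emph{not} imply joint smoothness near $g=0$, hence do not imply $v=\partial_t\tilde\Psi|_{t=0}\in C^\infty$ near $0$: a distributional limit of smooth functions controlled only through countably many point functionals at $g=0$ can perfectly well fail to be smooth at $0$. What is missing is a bound on $\norm{\Psi(\lambda)}_{C^m}$ that is \emph{uniform for $\lambda$ in compact subsets of} $\Rpt$; without it the "remainder estimate" you defer to cannot be started.

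This uniformity is precisely what the paper's proof supplies first, by a Baire category argument: the sets $B_n=\{\lambda:\norm{\Psi(\lambda)}_{C^m}\leq n\}$ cover $\Rpt$ and are closed (by continuity of $\Psi$ into $C^{-\infty}$ and lower semicontinuity of the $C^m$-norm), so some $B_n$ has interior, and the cocycle identity then propagates local boundedness of $\lambda\mapsto\norm{\Psi(\lambda)}_{C^m}$ to all of $\Rpt$. Arzel\`a--Ascoli upgrades this to continuity of $\Psi:\Rpt\to C^\infty_c(\mathfrak{g})$, and a reproducing formula $\Psi(\lambda)=\int_0^{+\infty}\bigl(F(\lambda^{-1}t)\Psi(t)-F(t)\alpha_\lambda(\Psi(t))\bigr)\frac{dt}{t}$ (with $\int F(t)t^k\frac{dt}{t}=1$) converts continuity into smoothness. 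If you insert that Baire/Arzel\`a--Ascoli step into your scheme, your Duhamel formula then finishes the proof just as well as the paper's integral identity; as written, however, the proposal proves the easy equivalence and leaves the hard uniform estimate unproved.
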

    \begin{proof}

    We follow \cite[Proof of Lemma 21]{ErikBobCalculus}. 
  Let $m\in \N$ be fixed,
    $B_n:=\{\lambda\in \Rpt:\norm{\Psi(\lambda)}_{C^m}\leq  n\}$.
  Clearly $\Rpt=\bigcup_{n\in \N}B_n$. Since $\Psi$ is continuous as a map $\Rpt\to  C^{-\infty}(\mathfrak{g})$, it follows that $B_n$ is closed for any $n\in \N$. 
  By the Baire category theorem, there exists $n$ such that $B_n$ has non empty interior. 
  By the cocycle relation $\Psi(\lambda\lambda')=\alpha_{\lambda}(\Psi(\lambda'))+\lambda^{\prime k}\Psi(\lambda)$, we deduce that the map 
          $\lambda\in \Rpt\mapsto \norm{\Psi(\lambda)}_{C^m}\in \R$
      is locally bounded. Continuity of $\Psi:\Rpt\to C^\infty_c(\mathfrak{g})$  now follows from the Arzelà-Ascoli theorem
   %   If not, then there exists a sequence $\lambda_n\to \lambda\in \Rpt$ and $\epsilon>0$ such that
   % \begin{equation}\label{eqn:aux_1}\begin{aligned}
   %      \norm{\Psi(\lambda_n)-\Psi(\lambda)}_{C^0}\geq \epsilon.        
   % \end{aligned}\end{equation}
   % By boundness of \eqref{eqn:aux_0} for $m=1$ and , there exists a subsequence of $\Psi(\lambda_n)$ which converges in $C^0$.
   %Here, we are applying Arzelà-Ascoli theorem on the compact space $\overline{\cup_{n\in \N}\supp(\Psi(\lambda_n))}\cup \supp(\Psi(\lambda))$.
  %Since $\Psi(\lambda_n)\to \Psi(\lambda)$ in $ C^{-\infty}(\mathfrak{g})$, it follows that the limit of the subsequence of $\Psi(\lambda_n)$ has to be 
  %$\Psi({\lambda})$. We thus obtain a contradiction to \eqref{eqn:aux_1}. Therefore, $\Psi:\Rpt\to C^0(\mathfrak{g})$ is continuous. 
  %By the fundamental theorem of calculus together with boundness of \eqref{eqn:aux_0}, we deduce that $\Psi:\Rpt\to C^\infty(\mathfrak{g})$ is continuous. 
  %The map $\Psi:\Rpt\to C^\infty_c(\mathfrak{g})$ is continuous because as $\lambda$ moves in a closed interval $[a,b]\subseteq \Rpt$, 
  %the smooth functions $\Psi(\lambda)$ are uniformly compactly supported.

  We now show that $\Psi:\Rpt\to C^\infty_c(\mathfrak{g})$ is smooth. Let $F\in C^\infty_c(\Rpt)$ such that 
  $\int_{0}^{+\infty} F(t)t^k\frac{dt}{t}=1$. % We define $$\nu:=\int_{0}^{+\infty} F(t)\alpha_t(u)\frac{dt}{t}\in  C^{-\infty}_c(\mathfrak{g}).$$ 
  %We have
  %\begin{align*}
  %  \alpha_\lambda(\nu)-\lambda^k u
    %&=\int_{0}^{+\infty} F(t)\alpha_{\lambda t}(u)\frac{dt}{t}-\int_{0}^{+\infty} F(t)\lambda^kt^ku\frac{dt}{t}
  %  =\int_{0}^{+\infty} F(\lambda^{-1}t)\alpha_t(u)\frac{dt}{t}-\int_{0}^{+\infty} F(\lambda^{-1}t)t^ku\frac{dt}{t}
  %  =\int_{0}^{+\infty} F(\lambda^{-1}t)\Psi(t)\frac{dt}{t}.
  %\end{align*} 
  % it follows that 
  %    \begin{equation*}\begin{aligned}
   %     \Phi:\Rpt\to C^\infty_c(\mathfrak{g}),\quad \Phi(\lambda)= \alpha_\lambda(\nu)-\lambda^k u
   %   \end{aligned}\end{equation*}
   %is smooth. By the identity
   %    $\Psi(\lambda)=\Phi(\lambda)-\alpha_\lambda(\Phi(1))$,
   %we deduce that $\Psi:\Rpt\to C^\infty_c(\mathfrak{g})$ is smooth. 
    One can check that
       \begin{equation*}\begin{aligned}
            \Psi(\lambda)=\int_{0}^{+\infty} F(\lambda^{-1}t)\Psi(t)-F(t)\alpha_\lambda(\Psi(t)) \frac{dt}{t}.        
       \end{aligned}\end{equation*}
    Since $\Psi:\Rpt\to C^\infty_c(\mathfrak{g})$ is continuous, it follows that  $\Psi:\Rpt\to C^\infty_c(\mathfrak{g})$ is smooth. 
    \end{proof}
    Let $u\in \cA_k$.
    By replacing $u$ with $u-g$ for some $g\in C^\infty_c(\mathfrak{g})$ such that $\langle P,g\rangle=\langle P,u\rangle$ for all polynomials $P$ homogeneous of degree $-m-Q$ with $m\leq n$, we can suppose that $\langle P,u\rangle=0$.
    By \Cref{lem:smoothness}, 
        \begin{equation*}\begin{aligned}
            f:=\odv{\alpha_t(u)-t^ku}{t}_{t=1}\in C^\infty_{c,n}(\mathfrak{g}).
        \end{aligned}\end{equation*}
     The chain rule applied to the identity $\alpha_{ts}(u)=\alpha_{t}(\alpha_s(u))$ implies that 
       \begin{equation*}\begin{aligned}
                    \odv{t^{-k}\alpha_t(u)}{t}=\frac{\alpha_t(f)}{t^{k+1}}
       \end{aligned}\end{equation*}
    The integral  $\int_0^1\frac{\alpha_t(f)}{t^{k+1}}$ defines an element of $\cA_k$ because $f\in C^\infty_{c,n}(\mathfrak{g})$.
    So, the limit 
        \begin{equation*}\begin{aligned}
            \lim_{t\to 0^+}t^{-k}\alpha_t(u)=u-\int_0^1\frac{\alpha_t(f)}{t^{k+1}}
        \end{aligned}\end{equation*}
    exists (as a distribution), and it belongs to $\cA_k$.
    By \eqref{eqn:norm_dilations}, the limit is supported in $\{0\}$.
    If $k\notin \Z_+$ this implies that the limit vanishes. If $k\in \Z_+$, then the limit is of the form $\delta_D$ for some $D\in \DO^k(\mathfrak{g})$.
    It also vanishes in this case by our assumption on the vanishing of $\langle P,u\rangle$.
\end{proof}
%A corollary of \Cref{prop:intDS} is that if $u\in \cA_0$, then there exists $f\in C^\infty_{c,1}(\mathfrak{g})$ such that
%    \begin{equation}\label{eqn:cA_0_int_f}\begin{aligned}
 %       u=\langle 1,u\rangle \delta +\int_0^1\alpha_{t}(f)\frac{\odif{t}}{t} .
  %  \end{aligned}\end{equation}

%One can show that for any $k\in\R$ and $n\in \Z_+$ such that $k< n$, any distribution in $\cA_k$ can be written as in \eqref{eqn:sqdklfmjqlskdfjqlsdjmfsd} for some $f\in C^\infty_{c,n}(\mathfrak{g})$. We won't need this. %This result can be easily deduced from \cite{TaylorBook} for a proof.

%Let $\hat{G}$ be the set of irreducible representation  of $\mathfrak{g}$ seen as a Lie group up to unitary equivalence.

\begin{theorem}\label{thm:u_bounded} If $k\in \C$ with $\Re(k)\leq 0$ and $u\in \Rd{k}$ and $\pi$ a unitary representation, then $\pi(u)L^2(\pi)\subseteq L^2(\pi)$ and $L^2(\pi)\xrightarrow{\pi(u)}L^2(\pi)$ is continuous.
 %   Furthermore, there exists a semi-norm on $C^\infty_c(\mathfrak{g})$ and a constant $C>0$ such that 
  %      \begin{equation*}\begin{aligned}
   %         \norm{\pi(u)}\leq C|\langle 1,u\rangle|+        
    %    \end{aligned}\end{equation*}

 \iffalse   Furthermore: \begin{enumerate}
\item If $\Re(k)=0$, then there $C>0$ such that $$\norm{\pi(u)}\leq |\langle u,1\rangle|^{1/2}+C$$
\item If $\Re(k)<0$, then 
    \begin{equation*}\begin{aligned}
        \norm{\pi(u)}\leq \frac{1}{1-2^{\Re(k)}}\norm{\alpha_{2}(u)-2^ku}_{L^1},\quad\forall u\in \Rd{k} .
    \end{aligned}\end{equation*}
\item If $\pi$ is irreducible and $\Re(k)<0$, then $\pi(u)$ is a compact operator.
\end{enumerate}
\fi
\end{theorem}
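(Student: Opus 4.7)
The plan is to apply Proposition~\ref{prop:intDS} to decompose $u = g + \int_0^1 \alpha_t(f)\frac{\odif{t}}{t^{1+k}}$ with $g \in C^\infty_c(\mathfrak{g})$ and $f \in C^\infty_{c,n}(\mathfrak{g})$ for some integer $n > \Re(k)$. The contribution $\pi(g)$ is bounded via \eqref{eqn:identities_pi_of_composition_norm_of_pi_adjoint}, so the problem reduces to controlling the integral, and it splits into two cases according to the sign of $\Re(k)$.

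If $\Re(k) < 0$, I would take $n = 0$, so that $f \in C^\infty_c(\mathfrak{g})$. Then \eqref{eqn:norm_dilations} gives $\|\alpha_t(f)\|_{L^1} = \|f\|_{L^1}$ and $\int_0^1 t^{-1-\Re(k)}\,\odif{t} < \infty$, so the integral converges absolutely in $L^1(\mathfrak{g})$. Hence $u \in L^1(\mathfrak{g})$ and the conclusion follows directly from \eqref{eqn:identities_pi_of_composition_norm_of_pi_adjoint}.

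If $\Re(k) = 0$, I would take $n = 1$, so that $\int_\mathfrak{g} f = 0$; the integrand now fails to be $L^1$-summable in $t$, so I would apply the Cotlar--Stein lemma to the dyadic pieces $u_j := \int_{2^{-j-1}}^{2^{-j}} \alpha_t(f)\frac{\odif{t}}{t^{1+k}}$ for $j \geq 0$. The substitution $t = 2^{-j}s$ yields $u_j = 2^{jk}\alpha_{2^{-j}}(v)$ with $v := \int_{1/2}^{1} \alpha_s(f)\frac{\odif{s}}{s^{1+k}} \in C^\infty_c(\mathfrak{g})$, and $\int_\mathfrak{g} v = 0$ since $\int \alpha_s(f) = \int f = 0$ for every $s$. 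Setting $T_j := \pi(u_j)$, each $T_j$ is bounded by $\|v\|_{L^1}$ uniformly in $j$ by \eqref{eqn:identities_pi_of_composition_norm_of_pi_adjoint}. Using the compatibility $\alpha_\lambda(a)\ast \alpha_\lambda(b) = \alpha_\lambda(a\ast b)$ for $j \leq \ell$ I compute
\[
\|u_j \ast u_\ell^*\|_{L^1} = \bigl\|v \ast \alpha_{2^{j-\ell}}(v^*)\bigr\|_{L^1}.
\]
Since $\int v^* = 0$, writing $v \ast \alpha_\lambda(v^*)(x) = \int [v(xy^{-1}) - v(x)]\alpha_\lambda(v^*)(y)\,\odif{y}$ and using smoothness of $v$ together with the fact that $\alpha_\lambda(v^*)$ is supported in a set of Euclidean diameter $O(\lambda)$ for $\lambda \leq 1$ yields $\|u_j \ast u_\ell^*\|_{L^1} \leq C\, 2^{-|j-\ell|}$; a symmetric bound for $\|u_j^* \ast u_\ell\|_{L^1}$ follows by the same reasoning. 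These decay estimates provide the Cotlar--Stein hypotheses and yield boundedness of $\sum_j T_j$. The main obstacle is precisely this last step: obtaining $\|v \ast \alpha_\lambda(v^*)\|_{L^1} = O(\lambda)$ requires combining the mean-zero cancellation with a Lipschitz-type estimate for $v$ in the BCH group law rather than in the vector space structure on $\mathfrak{g}$, but this is standard once the compact support of $v$ and $v^*$ is controlled.
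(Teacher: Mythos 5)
Your proof is correct, and its architecture is the same as the paper's (which is the Knapp--Stein argument): split on the sign of $\Re(k)$, get absolute $L^1$-convergence of the dyadic pieces when $\Re(k)<0$, and apply Cotlar--Stein to the unimodular dyadic pieces when $\Re(k)=0$. The two places where you diverge are worth recording. First, you manufacture the pieces from the integral representation of \Cref{prop:intDS}, whereas the paper telescopes directly with $v=u-2^{k}\alpha_{2^{-1}}(u)\in C^\infty_c(\mathfrak{g})$ and uses \eqref{eqn:limit_delta_0} to identify the strong limit of the partial sums with $\pi(u)\xi$ as already defined on $C^{-\infty}(\pi)$; the two decompositions agree up to a bounded remainder, but you should add a word on that identification step, since the theorem asserts something about the a priori defined $\pi(u)$. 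Second, and more substantively, for the off-diagonal decay you use the mean-zero cancellation of $v$ together with a group-law Lipschitz estimate and the fact that $\alpha_\lambda(\supp v^*)$ has Euclidean diameter $O(\lambda)$; this is correct (it is the same mean-value computation as \eqref{eqn:sdkqjfljkqsodfjmoqsdjfmoqksd} in the proof of \Cref{lem:Riemannian}). The paper instead invokes \Cref{prop:sum_D_inv} to factor $v=\sum_i\delta_{D_i}\ast w_i$ with $D_i$ homogeneous of degree $n_i\geq 1$ and reads off the factor $2^{-|j-\ell|n_i/2}$ from exact homogeneity of $\delta_{D_i}$ under dilations. What the paper's route buys is uniformity: \Cref{lem:qjsdofjo} bounds the operator norm by a continuous seminorm $q(v)$, with the $w_i$ chosen continuously in $v$, and this uniformity is needed later for the continuity hypothesis \eqref{eqn:continuity_hypothesis}. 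For the boundedness statement of this theorem alone, uniformity is not required and your non-uniform cancellation estimate suffices.
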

To my knowledge, this theorem is due to Knapp and Stein \cite{KnappStein1}, see also \cite{GoodmanBoundedL2}. Our proof is basically the same as that of Knapp and Stein. We only simplified some technical details.
\begin{proof}
We start with the case $\Re(k)<0$. Let $v=u-2^{k}\alpha_{2^{-1}}(u)\in C^\infty_c(\mathfrak{g})$. 
We have $$u=2^{k(n+1)}\alpha_{2^{-k(n+1)}}(u)+\sum_{j=0}^{n}2^{kj}\alpha_{2^{-j}}(v).$$ 
%One can show that for any 
 %   \begin{equation}\label{eqn:limit_lambda_0}\begin{aligned}
 %       \lim_{\lambda\to 0^+}\alpha_\lambda(u)=\langle 1,u\rangle \delta,\quad \forall u\in C^{-\infty}_c(\mathfrak{g})
  %  \end{aligned}\end{equation}
%So,   $\lim_{\lambda\to 0^+}\lambda^{-k}\alpha_\lambda(u)=0$.
By \eqref{eqn:limit_delta_0},
\begin{equation*}\begin{aligned}
        \pi(u)\xi=\sum_{j=0}^{\infty}2^{kj}\pi(\alpha_{2^{-j}}(v))\xi,\quad \forall \xi\in C^{-\infty}(\pi). 
    \end{aligned}\end{equation*}
If $\xi\in L^2(\pi)$, then the sum converges in $L^2(\pi)$ because by \eqref{eqn:norm_dilations} and \eqref{eqn:identities_pi_of_composition_norm_of_pi_adjoint}, we have $$\sum_{j=0}^{\infty}|2^{kj}|\norm{\pi(\alpha_{2^{-j}}(v))\xi}_{L^2(\pi)}\leq \sum_{j=0}^\infty 2^{\Re(k)j}\norm{\alpha_{2^{-j}}(v)}_{L^1}\norm{\xi}_{L^2(\pi)}= \sum_{j=0}^\infty 2^{\Re(k)j}\norm{v}_{L^1}\norm{\xi}_{L^2(\pi)}.$$
%If $\pi$ is irreducible, then by \cite[13.11.12]{Dixmier} $\pi(v)$ is compact. Hence $\pi(u)$ is compact.

We now treat the case $\Re(k)=0$. By replacing $u$ with $u^*\ast u$, we can suppose that $k=0$. The proof in this case uses the following lemma. 
\begin{lem}[{Cotlar-Stein Lemma \cite[Lemma 18.6.5]{HormanderIII}}]Let $(A_n)_{n\in \N}$ be a sequence of bounded operators acting on a Hilbert space $H$, such that $$A=\sup_{j}\sum_{k=1}^{+\infty}\norm{A_j^*A_k}^{1/2}_{\cL(H)}<+\infty,\quad B=\sup_{j}\sum_{k=1}^{+\infty}\norm{A_jA_k^*}^{1/2}_{\cL(H)}<+\infty.$$ The sum $\sum_{k=1}^{+\infty}A_k$ converges in the strong topology to a bounded operator whose norm is $\leq \sqrt{AB}$.
 \end{lem}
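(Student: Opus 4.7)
The plan is to first bound the partial sums $S_N := \sum_{k=1}^N A_k$ in operator norm uniformly in $N$ by $\sqrt{AB}$, and then upgrade this to strong convergence by showing that $(S_N\xi)$ is Cauchy on a dense subspace. The uniform bound rests on the classical $m$-th power trick: since $S_N^*S_N$ is positive self-adjoint, $\|S_N\|^{2m} = \|(S_N^*S_N)^m\|$ for every $m \in \N$, and expanding this power and applying the triangle inequality yields
\[
\|S_N\|^{2m} \leq \sum_{j_1,k_1,\ldots,j_m,k_m = 1}^N \|A_{j_1}^* A_{k_1} A_{j_2}^* A_{k_2} \cdots A_{j_m}^* A_{k_m}\|.
\]

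For each word $T = A_{j_1}^* A_{k_1} \cdots A_{j_m}^* A_{k_m}$ I would use the geometric mean of two natural norm estimates. Pairing adjacent factors as $(A_{j_i}^*, A_{k_i})$ gives $\|T\| \leq \prod_{i=1}^m \|A_{j_i}^* A_{k_i}\|$; shifting the pairing to $(A_{k_i}, A_{j_{i+1}}^*)$, leaving $A_{j_1}^*$ and $A_{k_m}$ unpaired at the two ends, gives $\|T\| \leq \|A_{j_1}\| \|A_{k_m}\| \prod_{i=1}^{m-1} \|A_{k_i} A_{j_{i+1}}^*\|$. The geometric mean of these is
\[
\|T\| \leq \sqrt{\|A_{j_1}\|\,\|A_{k_m}\|} \prod_{i=1}^m \|A_{j_i}^* A_{k_i}\|^{1/2} \prod_{i=1}^{m-1} \|A_{k_i} A_{j_{i+1}}^*\|^{1/2}.
\]
The prefactor is bounded by $B$ since $\|A_j\|^2 = \|A_j A_j^*\|$ appears as a single term in the sum defining $B$, so $\sup_j \|A_j\| \leq B$. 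Summing the $2m$ indices from the inside outward, in the order $k_m, j_m, k_{m-1}, j_{m-1}, \ldots, k_1, j_1$, each step involves only one remaining factor and is controlled by either $A$ or $B$, alternately; the last index $j_1$ runs freely and contributes a factor $N$. This produces a bound of the form $\|S_N\|^{2m} \leq N \cdot B \cdot A^m B^{m-1}$. Taking $2m$-th roots and sending $m \to \infty$ with $N$ fixed makes $N^{1/2m}$ and $B^{1/2m}$ tend to $1$, leaving $\|S_N\| \leq \sqrt{AB}$.

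To pass to strong convergence, I would use that $\sum_k \|A_k A_l^*\| \leq B \cdot \sup_k \|A_k A_l^*\|^{1/2} \leq B^2$, which follows directly from the hypothesis. Hence for every fixed $l$ and every $\eta \in H$, the series $\sum_k A_k A_l^* \eta$ converges absolutely in $H$, so $\sum_k A_k \xi$ converges for every $\xi$ in the linear span $V$ of $\bigcup_l A_l^*(H)$. On the orthogonal complement $V^\perp = \bigcap_l \ker A_l$ every $A_k$ vanishes, so $S_N \xi = 0$ trivially. Combining these two cases with the uniform bound $\|S_N\| \leq \sqrt{AB}$ gives strong convergence of $(S_N)$ on all of $H$, and the strong limit inherits the norm bound by lower semicontinuity of the operator norm.

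The main obstacle is the combinatorial bookkeeping in the geometric mean estimate: one has to arrange the expansion of $(S_N^* S_N)^m$ so that each index sum matches exactly one of the two hypotheses $A$ or $B$, and verify that the single unpaired index $j_1$ produces only a factor $N$ that is killed in the limit $m \to \infty$. Once this norm bound is in place, the passage to strong convergence via a dense subspace argument is routine.
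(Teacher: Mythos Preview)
The paper does not give its own proof of this lemma; it is stated with a citation to H\"ormander \cite[Lemma 18.6.5]{HormanderIII} and used as a black box in the proof of \Cref{thm:u_bounded}. Your argument is the classical one found in that reference: the $m$-th power trick combined with the geometric mean of the two pairings to get the uniform bound $\|S_N\|\leq\sqrt{AB}$, followed by absolute convergence on the range of the $A_l^*$ to obtain strong convergence. The bookkeeping you describe is correct, and your proof would serve as a self-contained replacement for the citation.
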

\begin{lem}\label{lem:qjsdofjo} %Let $v\in C^\infty_{c,1}(\mathfrak{g})$, $(\beta_j)_{j\in \Z}\in \C$ such that $|\beta_j|\leq 1$ for all $j\in \Z$.
  %Then, the sum $\sum_{j=-\infty}^\infty \beta_j\pi(\alpha_{2^j}(v))$ converges in the strong operator topology to a bounded operator $L^2(\pi)\to L^2(\pi)$.
  %Furthermore,
  There exists a continuous semi-norm $q$ on $C^\infty_c(\mathfrak{g})$ such that 
      \begin{equation*}\begin{aligned}
          \norm{\sum_{j\in \Z} \beta_j\pi(\alpha_{2^j}(v))}_{\cL(L^2(\pi))}\leq q(v)
      \end{aligned}\end{equation*}
  for all $v\in C^\infty_{c,1}(\mathfrak{g})$ and $(\beta_j)_{j\in \Z}\in \C$ such that $|\beta_j|\leq 1$ for all $j\in \Z$.
  %the map $v\in C^\infty_c(\mathfrak{g})\mapsto  \sum_{j\in \Z} \beta_j\pi(\alpha_{2^j}(v)) \in \cL(L^2(\pi))$ is continuous.
\end{lem}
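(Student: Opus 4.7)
The plan is to apply the Cotlar-Stein lemma to $A_j := \beta_j\,\pi(\alpha_{2^j}(v))$, so it suffices to bound $\sup_j \sum_{\sigma\in\Z}\norm{A_j^*A_{j+\sigma}}^{1/2}_{\cL(L^2(\pi))}$ and the symmetric sum $\sup_j\sum_\sigma \norm{A_jA_{j+\sigma}^*}^{1/2}_{\cL(L^2(\pi))}$ by a continuous semi-norm of $v$. Since $\alpha_\lambda$ is a Lie group automorphism, it intertwines both the involution $u\mapsto u^*$ and convolution, so
\begin{equation*}
A_j^*A_{j+\sigma}=\bar\beta_j\beta_{j+\sigma}\,\pi\bigl(\alpha_{2^j}(v^*\ast\alpha_{2^\sigma}(v))\bigr).
\end{equation*}
Combining \eqref{eqn:identities_pi_of_composition_norm_of_pi_adjoint} with the $L^1$-invariance of $\alpha_\lambda$ in \eqref{eqn:norm_dilations} gives $\norm{A_j^*A_{j+\sigma}}_{\cL(L^2(\pi))}\leq \norm{v^*\ast\alpha_{2^\sigma}(v)}_{L^1}$, and analogously $\norm{A_jA_{j+\sigma}^*}_{\cL(L^2(\pi))}\leq \norm{v\ast\alpha_{2^\sigma}(v^*)}_{L^1}$. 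The problem thus reduces to a decay estimate for these $L^1$-norms of convolutions in $\sigma$.

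The crucial claim is that there is a continuous semi-norm $q_0$ on $C^\infty_c(\mathfrak{g})$ with $\norm{v^*\ast\alpha_{2^\sigma}(v)}_{L^1}\leq 2^{-|\sigma|}\,q_0(v)$ for every $v\in C^\infty_{c,1}(\mathfrak{g})$. For $\sigma\leq 0$ I exploit the vanishing moment condition: $v\in C^\infty_{c,1}(\mathfrak{g})$ means $\int v=0$, hence $\int\alpha_{2^\sigma}(v)=0$, so
\begin{equation*}
(v^*\ast\alpha_{2^\sigma}(v))(g)=\int \alpha_{2^\sigma}(v)(h)\bigl[v^*(gh^{-1})-v^*(g)\bigr]\odif{h}.
\end{equation*}
Using the BCH formula, the difference $v^*(gh^{-1})-v^*(g)$ can be written as an integral over $[0,1]$ of a right-invariant vector field applied to $v^*$ along the path $g\cdot(-th)$, giving a pointwise bound $|v^*(gh^{-1})-v^*(g)|\leq C\,|h|\,p(v)$, where $|\cdot|$ is a homogeneous quasi-norm on $\mathfrak{g}$ and $p$ is a continuous semi-norm on $C^\infty_c(\mathfrak{g})$. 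For $\sigma\leq 0$ the support of $\alpha_{2^\sigma}(v)$ lies in $\alpha_{2^\sigma}(\supp v)$, so $|h|\leq C\,2^\sigma$; integrating in $h$ uses $\norm{\alpha_{2^\sigma}(v)}_{L^1}=\norm{v}_{L^1}$, while integrating in $g$ uses that $\supp(v^*\ast \alpha_{2^\sigma}(v))$ is contained in a fixed compact neighborhood of $\supp v^*$ when $\sigma\leq 0$. The case $\sigma\geq 0$ reduces to the previous one via the identity $v^*\ast\alpha_{2^\sigma}(v)=\alpha_{2^\sigma}\bigl(\alpha_{2^{-\sigma}}(v^*)\ast v\bigr)$, since $\alpha_{2^{-\sigma}}(v^*)$ also integrates to $0$.

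The term $\norm{v\ast\alpha_{2^\sigma}(v^*)}_{L^1}$ is controlled identically. The geometric decay makes $\sum_\sigma 2^{-|\sigma|/2}$ finite, so the hypotheses of the Cotlar-Stein lemma hold uniformly in $j$ and with constants depending only on a continuous semi-norm $q$ of $v$, yielding the claimed bound. The main obstacle is the pointwise estimate $|v^*(gh^{-1})-v^*(g)|\leq C|h|\,p(v)$: on a non-commutative graded nilpotent group one must verify that the right-invariant vector fields arising along a BCH path have operator weights controlled by the homogeneous norm of $h$, and that the resulting semi-norm $p$ is continuous in the LF-topology of $C^\infty_c(\mathfrak{g})$.
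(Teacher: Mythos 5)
Your proof is correct and follows the same overall strategy as the paper (Cotlar--Stein applied to $A_j=\beta_j\pi(\alpha_{2^j}(v))$, reduction to decay of $\norm{v^*\ast\alpha_{2^\sigma}(v)}_{L^1}$ in $\sigma$), but you obtain the decay by a different mechanism. The paper invokes \Cref{prop:sum_D_inv} to write $v=\sum_i\delta_{D_i}\ast w_i$ with $D_i$ homogeneous of degree $n_i\geq 1$, and then extracts the factor $2^{-|\sigma| n/2}$ purely algebraically by commuting $\delta_D$ past the dilation ($\alpha_{2^\sigma}(\delta_D\ast w)=2^{\sigma n}\delta_D\ast\alpha_{2^\sigma}(w)$ for $\sigma<0$, and dually via $D^*$ for $\sigma\geq 0$), before applying $\norm{\pi(u)}\leq\norm{u}_{L^1}$. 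You instead use the vanishing moment $\int v=0$ directly, subtract $v^*(g)\int\alpha_{2^\sigma}(v)=0$ inside the convolution, and control the difference $v^*(gh^{-1})-v^*(g)$ by the mean value theorem, with $\|h\|\lesssim 2^{\sigma}$ on $\supp\alpha_{2^\sigma}(v)$ for $\sigma\leq 0$; the case $\sigma\geq 0$ is handled by the dilation identity $v^*\ast\alpha_{2^\sigma}(v)=\alpha_{2^\sigma}\bigl(\alpha_{2^{-\sigma}}(v^*)\ast v\bigr)$ and $L^1$-invariance. This is sound: the two routes are morally the same (the paper's \Cref{prop:sum_D_inv} is itself a Poincar\'e-type integration of the mean value estimate), and indeed the paper uses exactly your mean-value argument elsewhere (see \eqref{eqn:sdkqjfljkqsodfjmoqsdjfmoqksd} in \Cref{lem:Riemannian}). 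What the paper's version buys is a clean exact homogeneity factor and a rate $2^{-|\sigma|n/2}$; what yours buys is that you avoid the (slightly delicate) claim that the decomposition $v=\sum\delta_{D_i}\ast w_i$ can be chosen continuously in $v$. Two cosmetic points: the Euclidean norm already satisfies $\|\alpha_{2^\sigma}(x)\|\leq 2^{\sigma}\|x\|$ for $\sigma\leq 0$, so the homogeneous quasi-norm detour you worry about is unnecessary for $g,h$ in compact sets; and your intermediate bound $\norm{v^*\ast\alpha_{2^\sigma}(v)}_{L^1}\leq 2^{-|\sigma|}q_0(v)$ cannot hold with $q_0$ a genuine (1-homogeneous) semi-norm since the left-hand side is quadratic in $v$ --- but the final Cotlar--Stein bound $\sqrt{AB}$ is 1-homogeneous and is dominated by a maximum of continuous semi-norms, which is all the lemma requires (the paper's own bound has the same form).
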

\begin{proof}
We apply the Cotlar-Stein lemma. By replacing $\pi$ with $\bigoplus_{j\in \Z}\pi\circ \alpha_{2^j}$, we can suppose that 
$\norm{\pi(\alpha_{2^j}(\phi))}_{\cL(L^2(\pi))}=\norm{\pi(\phi)}_{\cL(L^2(\pi))}$ forall $\phi\in C^\infty_c(\mathfrak{g})$.
Hence, 
    \begin{equation*}\begin{aligned}
        \norm{\pi\Big(\overline{\beta_j} \beta_l\alpha_{2^j}(v)^*\ast\alpha_{2^l}(v)\Big)}_{\cL(L^2(\pi))}\leq \norm{\pi\Big(v^*\ast\alpha_{2^{l-j}}(v)\Big)}_{\cL(L^2(\pi))}.
    \end{aligned}\end{equation*}
 So, it suffices to show that there exists a continuous semi-norm $q$ on $C^\infty_c(\mathfrak{g})$ such that
$$\sum_{j\in \Z}\norm{\pi\Big(v^*\ast \alpha_{2^j}(v)\Big)}^\frac{1}{2}_{\cL(L^2(\pi))}\leq q(v),\quad \sum_{j\in \Z}\norm{\pi\Big(v\ast \alpha_{2^j}(v^*)\Big)}^\frac{1}{2}_{\cL(L^2(\pi))}\leq q(v)$$
By symmetry, we only deal with the first.
We divide the sum $\sum_{j\in \Z}=\sum_{j\geq 0}+\sum_{j< 0}$.
Since $v\in C^\infty_{c,1}(\mathfrak{g})$, we can write $v$ as a finite sum of elements of the form $\delta_D\ast w$ where $w\in C^\infty_c(\mathfrak{g})$ and $D\in \DO^n(\mathfrak{g})$ with $n\geq 1$. 
Following the proof of \Cref{prop:sum_D_inv}, after choosing a linear basis $\oplus_{j=1}^{N}\DO^j(\mathfrak{g})$, one can choose the $w$ to depend continuously on $v$.
%Furthermore this decomposition can be done continuously. By this we mean that if we fix a linear basis $g_1,\cdots,g_m$ of $\mathfrak{g}$ such that $g_i\in \mathfrak{g}^{\omega(i)}$ for some $\omega(i)$ for each $i$ $\mathfrak{g}^i$ for each $i\in \{1,\cdots,N\}$, then we can find continuous linear map $T$ 
So, after replacing $v$ with $\delta_D\ast w$,  we have \begin{align*}
\sum_{j<0}\norm{\pi\Big(v^*\ast \alpha_{2^j}(\delta_D\ast w)\Big)}_{\cL(L^2(\pi))}^\frac{1}{2}&=\sum_{j< 0}2^{\frac{jn}{2}}\norm{\pi\Big(v^*\ast \delta_D\ast  \alpha_{j}(w)\Big)}_{\cL(L^2(\pi))}^\frac{1}{2}\\&\leq 
\sum_{j< 0}2^{\frac{jn}{2}}\norm{v^*\ast \delta_D}_{L^1}^\frac{1}{2}\norm{w}_{L^1}^\frac{1}{2}.
\end{align*}
 %For the positive part, we proceed similarly. We can replace $v^*$ by $(\delta_D \ast w)^*$. We have
 and
 \begin{align*}
\sum_{j\geq 0}\norm{\pi\Big((\delta_D \ast w)^*\ast\alpha_{2^j}(v)\Big)}_{\cL(L^2(\pi))}^\frac{1}{2}&=\sum_{j\geq 0}\norm{\pi\Big( w^*\ast \delta_{D^*}\ast\alpha_{2^j}(v)\Big)}_{\cL(L^2(\pi))}^\frac{1}{2}\\
&=\sum_{j\geq 0}2^{\frac{-jn}{2}}\norm{\pi\Big( w^*\ast \alpha_{2^j}(\delta_{D^*}\ast v)\Big)}^\frac{1}{2}_{\cL(L^2(\pi))}\\
&\leq \sum_{j\geq 0}2^{\frac{-jn}{2}}\norm{w^*}_{L^1}^\frac{1}{2}\norm{\delta_{D^*}\ast v}_{L^1}^\frac{1}{2}.\qedhere
\end{align*}
\end{proof}
Let $u\in \Rd{0}$, $v=u-\alpha_{2^{-1}}(u)\in C^\infty_{c}(\mathfrak{g})$. 
By Proposition \ref{prop:sum_D_inv}, $v\in C^\infty_{c,1}(\mathfrak{g})$. 
By \eqref{eqn:limit_delta_0}, we deduce that
    \begin{equation}\label{eqn:aux_11}\begin{aligned}
        \pi(u)-\langle 1,u\rangle\pi(\delta)=\sum_{j\leq 0}\pi(\alpha_{2^j}(v)).
    \end{aligned}\end{equation}
Since $\pi(\delta)=\Id$, the result follows.
%Similarly 
\end{proof}
It follows from \Cref{thm:u_bounded} that we can define 
    \begin{equation*}\begin{aligned}
        \norm{u}_{\overline{\cA}_0}:=\sup_{\pi\in \hat{G}}\norm{\pi(u)}_{\cL(L^2(\pi))}=\Xi(u)_{L^2(G)},\quad u\in \Rd{0}.  
        ,      
    \end{aligned}\end{equation*}
where $\Xi$ is the left regular representation of $\mathfrak{g}$ seen as a Lie group, and $\hat{G}$ is the space of irreducible unitary representations of $G$. The second equality follows from the fact that the Lie group $G$ is amenable.
We have thus defined a $C^*$-calculus.
There are two types of representations of the calculus $(\cA_k)_{k\in \C}$ that are of interest to us.
\paragraph{Representations of first type:}
If $\pi$ is a unitary representation of $G$, then we define $\pi_k:=\pi$ for all $k\in \C$.
The operators $\pi(u)\in \cL(C^\infty(\pi))$ are well-defined for any $u\in \cA_k$ because $u$ is compactly supported.
The topology on $C^\infty(\pi)$ defined using the semi-norms $\norm{\pi(u)\xi}_{L^2(\pi)}$ for $k\in \C$ and $u\in \cA_k$ coincides with the usual topology on $C^\infty(\pi)$ because for any $D\in \DO^n(\mathfrak{g})$, $\delta_D\in \cA_n$.
A consequence of the above is that we have defined Sobolev spaces $H^k(\pi)$ for $k\in \R$, see the paragraph following \Cref{dfn:unitary_representation_abstract}.
%We now define $\pi_k:\cA_k\to \cL(C^\infty(\pi))$.
%So, we need to check that $\pi(u)L^2(\pi)\subseteq L^2(\pi)$ if $u\in \cA_k$ with $\Re(k)\leq 0$ and that $L^2(\pi)\xrightarrow{\pi(u)}L^2(\pi)$ is continuous.
\paragraph{Representations of second type:}
Let $\pi$ be a \textit{non-trivial irreducible} unitary representation of $G$. We define $\pi_k$ by the formula
 by the formula 
    \begin{equation}\label{eqn:pi_k_Helffer}\begin{aligned}
        \pi(\sigma^k(u))\xi:=\lim_{n\to +\infty}2^{-nk}\pi(\alpha_{2^n}(u))\xi,\quad \forall u\in \cA_k,\xi\in C^{\infty}(\pi)
    \end{aligned}\end{equation}
We will use the notation $ \pi(\sigma^k(u))$ instead of $\pi_k$ to avoid confusion between representations of first and second type.
\begin{theorem}\label{thm:pi_k_Helffer}
   % Let $\pi$ be a non-trivial irreducible unitary representation of $G$. 
     Limit \eqref{eqn:pi_k_Helffer} exists in $C^{\infty}(\pi)$, and the induced map $C^\infty(\pi)\xrightarrow{\pi(\sigma^k(u))} C^\infty(\pi)$ is continuous.
    The family $(\pi(\sigma^k(\cdot)))_{k\in \C}$ defines a representation   of $(\cA)_{k\in \C}$. In fact, the following hold: 
    \begin{enumerate}
         \item  If $u\in\cA_k,w\in\cA_l,\xi,\eta \in C^{\infty}(\pi)$, then   \begin{equation*}\begin{aligned}
            \pi_0(\delta)=\mathrm{Id}_{C^\infty(\pi)},\quad    \pi(\sigma^k(u))\pi(\sigma^l(w))=\pi_{k+l}(uw),\quad \langle \xi,\pi(\sigma^k(u))\eta\rangle=\langle \pi(\sigma^{\bar{k}}(u^*))\xi,\eta\rangle.        
            \end{aligned}\end{equation*}
        \item If $\Re(k)\leq 0$, then $\pi(\sigma^k(u))$ extends to a bounded operator on $L^2(\pi)$. Furthermore, for all $u\in \cA_0$, $\norm{\pi(\sigma^0(u))}_{\cL(L^2(\pi))}\leq \norm{u}_{\overline{\cA_0}}$.
        \item For any $k\in \C$ and $u\in \cA_k$, $\pi(u)-\pi(\sigma^k(u))$ extends to a continuous map $C^{-\infty}(\pi)\to C^\infty(\pi)$. 
       In particular, the topology on $C^\infty(\pi)$ defined using the semi-norms $\norm{\pi(\sigma^k(u))\xi}_{L^2(\pi)}$ for $k\in \C$ and $u\in \cA_k$ coincides with the usual topology on $C^\infty(\pi)$, and the Sobolev spaces defined using the representation $(\pi(\sigma^k(\cdot)))_{k\in \C}$ coincide with the Sobolev spaces defined using the representation $\pi$ seen as a representation of first type.
     \end{enumerate}  
\end{theorem}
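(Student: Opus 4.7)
The plan rests on the integral representation of \Cref{prop:intDS}: choose $n\in\N$ with $n>\Re(k)$ and write $u = g + \int_0^1 \alpha_t(f)\,\frac{dt}{t^{1+k}}$ with $g\in C^\infty_c(\mathfrak{g})$ and $f\in C^\infty_{c,n}(\mathfrak{g})$. A direct change of variables $s = 2^N t$ yields
\[
2^{-Nk}\alpha_{2^N}(u) \;=\; 2^{-Nk}\alpha_{2^N}(g) \;+\; \int_0^{2^N}\alpha_s(f)\,\frac{ds}{s^{1+k}},
\]
so existence of the limit in \eqref{eqn:pi_k_Helffer} reduces to two statements: the tail integral $\int_1^\infty \alpha_s(f)\,\frac{ds}{s^{1+k}}$ acts continuously on $C^\infty(\pi)$, and $2^{-Nk}\pi(\alpha_{2^N}(g))\xi\to 0$ in $C^\infty(\pi)$ for each $\xi\in C^\infty(\pi)$.

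To handle the tail I would decompose it dyadically as $\sum_{j\geq 0} 2^{-jk}\alpha_{2^j}(h)$ where $h := \int_1^2 \alpha_r(f)\,\frac{dr}{r^{1+k}}$; testing against homogeneous polynomials shows $h\in C^\infty_{c,n}(\mathfrak{g})\subseteq C^\infty_{c,1}(\mathfrak{g})$, so the Cotlar--Stein estimate of \Cref{lem:qjsdofjo} with $\beta_j = 2^{-jk}$ (satisfying $|\beta_j|\leq 1$ when $\Re(k)\geq 0$) gives strong convergence on $L^2(\pi)$ of the partial sums with a uniform operator-norm bound; for $\Re(k)<0$ one instead takes $n=0$ in \Cref{prop:intDS} and obtains $L^1$-summability directly. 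This is upgraded to convergence in $C^\infty(\pi)$ by replacing $\xi$ with $\pi(D)\xi$ for any right-invariant differential operator $D$, using that such $D$ commute with convolution. The decay of the prefactor is immediate when $\Re(k)>0$, since $|2^{-Nk}|\to 0$ while $\norm{\pi(\alpha_{2^N}(g))}_{\cL(L^2(\pi))}\leq \norm{g}_{L^1}$; when $\Re(k)\leq 0$ I would refine the decomposition so that $g$ has many vanishing moments, then invoke non-triviality of the irreducible $\pi$ via a Banach--Alaoglu/invariance argument, namely that any weak limit of $\pi(\alpha_{2^N}(g))\xi$ must be fixed by all of $\pi(G)$ and hence vanish.

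The algebraic properties in (1) follow by passing to the limit in the identities satisfied by $2^{-Nk}\alpha_{2^N}(u)$, since $\alpha_{2^N}$ is a $\ast$-algebra automorphism of $C^{-\infty}_c(\mathfrak{g})$. The bound in (2) for $\Re(k)\leq 0$ uses dilation invariance of $\norm{\cdot}_{\overline{\cA_0}}$, a consequence of its identification with the $L^2(G)$-operator norm of the left-regular representation via amenability of $G$ together with the fact that $\alpha_{2^N}$ is a group automorphism; the bound is then stable under the strong limit. For (3), the difference $u - \sigma^k(u)$ is a compactly supported distribution whose $2^{-Nk}\alpha_{2^N}$-limit vanishes, placing it in a strictly lower-order class of $(\cA_l)_{l\in\C}$; after finitely many such subtractions one is reduced to a genuine $C^\infty_c$ convolution kernel, which yields the smoothing property $C^{-\infty}(\pi)\to C^\infty(\pi)$.

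The main technical obstacle I foresee is the $\Re(k)\leq 0$ case of the prefactor decay: the required rate of decay of $\pi(\alpha_{2^N}(g))\xi$ is not a priori obvious and must be extracted from non-triviality of $\pi$ combined with the Cotlar--Stein machinery, rather than from mere uniform boundedness. A secondary subtlety is that all convergence must be verified in the Fréchet topology of $C^\infty(\pi)$ and not merely in $L^2(\pi)$, which is what forces the commutation argument with right-invariant differential operators throughout.
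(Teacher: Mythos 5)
Your reduction via the telescoping/dyadic decomposition (writing $2^{-Nk}\alpha_{2^N}(u)$ as $\pi(u)$ plus a sum of dilates of a fixed $C^\infty_c$ function, up to the prefactor term) is the same starting point as the paper's, which sets $v=2^{-k}\alpha_2(u)-u\in C^\infty_c(\mathfrak g)$ and writes $2^{-nk}\pi(\alpha_{2^n}(u))=\pi(u)+\sum_{j=0}^{n-1}2^{-jk}\pi(\alpha_{2^j}(v))$. But the mechanism you propose for making the series and the prefactor converge has a genuine gap, and in one place is simply backwards. For $\Re(k)<0$ the tail $\int_1^\infty\alpha_s(f)\,\frac{ds}{s^{1+k}}$ is \emph{not} $L^1$-summable: by \eqref{eqn:norm_dilations} its $L^1$-mass is $\norm{f}_{L^1}\int_1^\infty s^{-1-\Re(k)}\,ds$, which diverges precisely when $\Re(k)\le 0$ (it is the $\Re(k)>0$ case that is $L^1$-trivial, and the $\int_0^1$ portion that converges for $\Re(k)<0$). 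Adding vanishing moments makes this worse, not better: if $f=\delta_D\ast f'$ with $D$ homogeneous of degree $m\ge 1$, then $\alpha_s(f)=s^m\,\delta_D\ast\alpha_s(f')$ for $s\ge 1$, so moment conditions control the $\lambda\to 0^+$ regime (as in \eqref{eqn:limit_delta_0}), not $\lambda\to+\infty$. Likewise, Cotlar--Stein with $\beta_j=2^{-jk}$ requires $|\beta_j|\le 1$ and so only covers $\Re(k)\ge 0$, and your Banach--Alaoglu/invariance argument for the prefactor yields only qualitative strong convergence $\pi(\alpha_{2^N}(g))\xi\to 0$ with no rate, which cannot beat the exponentially growing factor $|2^{-Nk}|=2^{N|\Re(k)|}$ when $\Re(k)<0$.

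The missing ingredient is the quantitative Riemann--Lebesgue-type estimate that is the heart of the paper's proof (\Cref{lem:Riemannian}): for a \emph{non-trivial irreducible} $\pi$ and any $g\in C^\infty_c(\mathfrak g)$, the operator norm $\norm{\pi(\alpha_{2^N}(g))}_{\cL(L^2(\pi))}$ decays faster than $2^{-Nl}$ for every $l$. This is obtained not from moments or compactness arguments but from the fact that the trivial representation is not weakly contained in $\pi$, which produces $w$ with $\mathrm{Id}-\pi(w)$ invertible; the mean value theorem then gives $\norm{\pi(\alpha_{2^N}(g))(\mathrm{Id}-\pi(w))}\le\norm{g-g\ast\alpha_{2^{-N}}(w)}_{L^1}\lesssim 2^{-N}$, and Dixmier--Malliavin factorization $g=\sum v_i\ast w_i$ bootstraps this to arbitrary polynomial order. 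With that decay in hand, the series converges in operator norm for \emph{every} $k\in\C$, which simultaneously settles the $\Re(k)<0$ case you flagged as problematic and gives item (3): the difference $\pi(u)-\pi(\sigma^k(u))=-\sum_{j\ge 0}2^{-jk}\pi(\alpha_{2^j}(v))$ is not reducible to a single $C^\infty_c$ convolution kernel (the supports of $\alpha_{2^j}(v)$ grow without bound and the sum does not converge as a distribution), and its smoothing property $C^{-\infty}(\pi)\to C^\infty(\pi)$ is exactly the statement that $\pi(D)\pi(\alpha_{2^j}(v))\pi(D')$ still decays summably, which again rests on \Cref{lem:Riemannian} rather than on any algebraic cancellation. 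Without this lemma, or an equivalent quantitative substitute, the cases $\Re(k)\le 0$ of the existence of the limit and all of item (3) remain unproved in your outline.
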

\begin{proof}
    %By duality, we only need to show that if $\xi\in C^\infty(\pi)$, then Limit \eqref{eqn:pi_k_Helffer} exists in $C^\infty(\pi)$, and the induced $C^\infty(\pi)\xrightarrow{\pi(\sigma^k(u))} C^\infty(\pi)$ is continuous.
    By the identity $\pi(\sigma^{k+l}(\delta_D\ast u))=\pi(\delta_D)\ast \pi(\sigma^k(u))$ where $D\in \DO^l(\mathfrak{g})$, it follows that to show that Limit \eqref{eqn:pi_k_Helffer} exists in $C^\infty(\pi)$, it suffices to show that it exists in $L^2(\pi)$ and that 
    the induced $C^\infty(\pi)\xrightarrow{\pi(\sigma^k(u))} L^2(\pi)$ is continuous.
    %Continuity of the induced map would follow immediately from the closed graph theorem.
    To this end, let $v=2^{-k}\alpha_{2}(u)-u\in C^\infty_c(\mathfrak{g})$. 
    So,
        \begin{equation}\label{eqn:qsjkodfjmqsdf}\begin{aligned}
                 2^{-nk}\pi(\alpha_{2^n}(u))=\pi(u)+\sum_{j=0}^{n-1}2^{-jk}\pi( \alpha_{2^j}(v))   
        \end{aligned}\end{equation}
 The result now follows from the following variant of the Riemann--Lebesgue lemma:
    \begin{lem}\label{lem:Riemannian}
        If $\pi$ is a non-trivial irreducible unitary representation, and $u\in C^\infty_c(\mathfrak{g})$, then for any $l\in \N$, $\lim_{n\to +\infty}2^{-nl}\norm{\pi(\alpha_{2^n}(u))}_{\cL(L^2(\pi))}=0$.
    \end{lem}
    \begin{proof}
        Since the trivial representation isn't weakly contained in $\pi$, there exists $v\in C^\infty_c(\mathfrak{g})$ such that $\norm{\pi(v)}<|\langle 1,v\rangle|$.
   By rescaling, we can suppose that $\langle 1,v\rangle=1 $.
   So, $\norm{\pi(v^*\ast v)}<1$ and the operator $\mathrm{Id}_{L^2(\pi)}-\pi(v):L^2(\pi)\to L^2(\pi)$ is invertible. 
    By \eqref{eqn:norm_dilations} and \eqref{eqn:identities_pi_of_composition_norm_of_pi_adjoint}, 
   \begin{equation*}\begin{aligned}
           \norm{ \pi(\alpha_{2^n}(u))(\mathrm{Id}-\pi(v))}_{\cL(L^2(\pi))}= \norm{ \pi(\alpha_{2^n}(u)-\alpha_{2^n}(u)\ast v)}_{\cL(L^2(\pi))}&\leq \norm{\alpha_{2^n}(u)-\alpha_{2^n}(u)\ast v}_{L^1}\\
          &=\norm{u-u\ast \alpha_{2^{-n}}(v)}_{L^1}
        \end{aligned}\end{equation*}
       We have
        \begin{equation}\label{eqn:sdkqjfljkqsodfjmoqsdjfmoqksd}\begin{aligned}
              u(g)-u\ast \alpha_{2^{-n}}(v)(g)
              &=\int \big(u(g)-u(g\alpha_{2^{-n}}(h)^{-1})\big)v(h)\odif{h}
            \end{aligned}\end{equation}
    The mean value theorem together with compactness of the support of $u,v$ implies that 
                        \begin{equation*}\begin{aligned}
                            \limsup_{n\to +\infty}2^n\norm{u-u\ast \alpha_{2^{-n}}(v)}_{L^1}<+\infty.
                        \end{aligned}\end{equation*}
        Since $\mathrm{Id}-\pi(v)$ is invertible, we deduce that        
         \begin{equation}\label{eqn:aux_10}\begin{aligned}
          \limsup_{n\to +\infty}2^n\norm{\pi(\alpha_{2^n}(u))}_{\cL(L^2(\pi))}<+\infty
      \end{aligned}\end{equation}
    Now, there are two methods to proceed. Either one does the same calculation but replacing $\mathrm{Id}-\pi(v)$ by $(\mathrm{Id}-\pi(v))^l$ and using higher order mean value theorem.
    Or one could use the Dixmier-Mallivain theorem which we recall below
    \begin{theorem}[{\cite[3.1 Théorème]{DixmierMalliavin}, see also \cite{DixmierMalliavinLieGroupoids}}]\label{thm:Dixmier-Malliavin}
      Let $G$ be a Lie group equipped with a Haar measure. Every $u\in C^\infty_c(G)$
 can be expressed as $v_1\ast w_1 +\cdots+ u_m\ast v_m$ for some $m\in \N$,  $v_i, w_i \in  C^\infty_c(G)$.
    \end{theorem}
    By an iterated application of the Dixmier-Mallivain theorem and \eqref{eqn:aux_10}, we deduce that for any $l\in \N$,
        \begin{equation*}\begin{aligned}
          \limsup_{n\to +\infty}2^{nl}\norm{\pi(\alpha_{2^n}(u))}_{\cL(L^2(\pi))}<+\infty
        \end{aligned}\end{equation*}
        This finishes the proof of \Cref{lem:Riemannian}.
    \end{proof}
    Notice that $\pi(\sigma^k(u))-\pi(u)=\sum_{j=0}^{+\infty}2^{-jk}\pi( \alpha_{2^j}(v)) $ which by \Cref{lem:Riemannian} maps $C^{-\infty}(\pi)$ continuously to $C^\infty(\pi)$.
    In particular $\sum_{j=0}^{+\infty}2^{-jk}\pi( \alpha_{2^j}(v))$ is a bounded operator $L^2(\pi)\to L^2(\pi)$.
    If furthermore $\Re(k)\leq 0$, then $\pi(\sigma^k(u))$ extends to a bounded operator on $L^2(\pi)$ by \Cref{thm:u_bounded}.
    
    If $k=0$, then 
        \begin{equation*}\begin{aligned}
         \norm{\pi(\alpha_{2^n}(u))}_{\cL(L^2(\pi))}\leq \norm{\alpha_{2^n}(u)}_{\overline{\cA_0}}=\norm{u}_{\overline{\cA_0}},\quad \forall n\in \N.
        \end{aligned}\end{equation*}
    So, $\norm{\pi(\sigma^0(u))}_{\cL(L^2(\pi))}\leq \norm{u}_{\overline{\cA_0}}$.

    We now show that $\pi(\sigma^k(u))\pi(\sigma^l(w))=\pi_{k+l}(uw)$. If $\xi,\eta\in C^\infty(\pi)$, then we have
        \begin{equation*}\begin{aligned}
            \langle \pi(\sigma^k(u))\pi(\sigma^l(w))\xi,\eta\rangle=\langle \pi(\sigma^l(w))\xi,\pi(\sigma^{\bar{k}}(u^*))\eta\rangle&=\lim_{n\to +\infty}\langle 2^{-nl}\pi(\alpha_{2^n}(w))\xi,2^{-n\overline{k}}\pi(\alpha_{2^n}(u^*))\eta\rangle   \\&
            =\lim_{n\to +\infty}\langle 2^{-n(l+k)}\pi(\alpha_{2^n}(uw))\xi,\eta\rangle\\&=\langle \pi_{k+l}(uw)\xi,\eta\rangle.   
        \end{aligned}\end{equation*}
        The rest of \Cref{thm:pi_k_Helffer} is straightforward to prove.
%        Let $\pi$ be a non-trivial irreducible unitary representation of $G$. There is apparently two Sobolev spaces associated to $\pi$ depending on if $\pi$ is seen as an element of $\Sigma_1(\cA)$ or an element of $\Sigma_2(\cA)$.
 %   Both Sobolev spaces coincide because if $k\in \C$, $u\in \cA_k$, then $\pi(u)-\pi(\sigma^k(u))$ is a smoothing operator, i.e., it maps $C^{-\infty}(\pi)$ to $C^\infty(\pi)$. This follows from \eqref{eqn:qsjkodfjmqsdf} and \Cref{lem:Riemannian}.
    \end{proof}
    \begin{rem}\label{rem:extension_at_0}
        If $u\in \cA_0$, then for \textit{any} unitary representation $\pi$, and for any $\xi\in L^2(\pi)$, Limit \eqref{eqn:pi_k_Helffer} exists in $L^2(\pi)$ and defines a bounded operator on $L^2(\pi)$.
       This follows immediately from \MyCref{thm:u_bounded} and \eqref{eqn:qsjkodfjmqsdf}.
    \end{rem}

 \begin{rem}\label{rem:vanishing_smooth}
       For any $k\in \C$, if $u\in C^\infty_c(\mathfrak{g})$, then $\pi(\sigma^k(u))=0$ by \Cref{lem:Riemannian}.
   \end{rem}
Let \begin{itemize}
    \item   $\Sigma_1(\cA)$ be the set\footnote{One should take equivalence classes of unitary representations to avoid set-theoretic issues. We will ignore this.} of all representations of $(\cA)_{k\in \C}$ of first type which come from irreducible unitary representations of $G$
    \item   $\Sigma_2(\cA)$ be the set of all representations of $(\cA)_{k\in \C}$ of second type which come non-trivial irreducible unitary representation of $G$.
\end{itemize}
We take $\Sigma(\cA)=\Sigma_1(\cA)\sqcup \Sigma_2(\cA)$.
%Since a non-trivial irreducible unitary representation appears twice, elements of $\Sigma_2(\cA)$ will sometimes be decorated by a bar.
%  set of non-trivial irreducible unitary representations of $\mathfrak{g}$ union the singleton $\pi_{L^2(G)}$ the left regular representation of $\mathfrak{g}$ seen as a Lie group.
%Our goal now, is to define $C^\infty(\pi)$ and $\pi_k$ for any $\pi\in \Sigma(\cA)$.
%If $\pi\in \Sigma_1(\cA)$ is a non-trivial irreducible unitary representation, and $\bar{\pi}\in \Sigma_2(\cA)$ is the second copy of $\pi$, then the spaces $C^\infty(\pi)=C^\infty(\bar{\pi})$, $H^k(\pi)=H^k(\bar{\pi})$, $C^{-\infty}(\pi)=C^{-\infty}(\bar{\pi})$ coincide.
%The difference between $\pi\in \Sigma(\cA)$ and $\bar{\pi}\in \Sigma(\cA)$ will be in the definition of $\pi_k$ and $\bar{\pi}_k$.

%\paragraph{Symbol maps for elements of $\Sigma_2(\cA)$:} 
%If $\pi \in \Sigma_2(\cA)$ is a non-trivial irreducible unitary representation of $G$, then we define $\pi_k$
   % For any $\pi\in \Sigma(\cA)$, Equality 
        \begin{theorem}\label{thm:completion}
           The $C^*$-calculus $(\cA)_{k\in \C}$ with $\Sigma(\cA)$ satisfy the hypotheses of \Cref{thm:simple_arg_sobolev}.           
        %    The $C^*$-algebra $\overline{\cA_0}$ is of Type I, and any irreducible unitary representation of $\overline{\cA_0}$ is unitary equivalent to $\pi_0$ for some $\pi\in \Sigma(\cA)$.
        \end{theorem}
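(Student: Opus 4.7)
I would verify the three hypotheses of \Cref{thm:simple_arg_sobolev} for the calculus $(\cA_k)_{k\in\C}$ with $\Sigma(\cA)=\Sigma_1(\cA)\cup\Sigma_2(\cA)$ in turn: Type~I of $\overline{\cA_0}$, exhaustion of irreducible representations by $\Sigma(\cA)$, and the existence of a family $(v_k)_{k\in K}$ with $v_0$ invertible and $k\mapsto v_k a v_{-k}$ norm-continuous. I expect the Type~I property together with the classification of irreducibles to be the main obstacle.

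For the family $(v_k)_{k\in K}$, I would fix a positive Rockland operator $R$ on $G$ of degree $m$, for instance a degree-$2$ sub-Laplacian built from a basis of $\mathfrak{g}^1$. Since $\alpha_\lambda R\alpha_\lambda^{-1}=\lambda^m R$, the complex powers $R^{k/m}$ defined by the spectral theorem are convolutions with kernels $\tilde K_k\in C^{-\infty}(\mathfrak{g})$ that are exactly homogeneous of degree $k$ and smooth on $\mathfrak{g}\setminus\{0\}$; in particular $\tilde K_0=\delta$. Fixing $\chi\in C^\infty_c(\mathfrak{g})$ with $\chi\equiv 1$ near $0$ and setting $v_k:=\chi\tilde K_k$, one computes directly
\[
\alpha_\lambda v_k-\lambda^k v_k=\lambda^k(\alpha_\lambda\chi-\chi)\tilde K_k,
\]
which is smooth and compactly supported since $\alpha_\lambda\chi-\chi$ vanishes near $0$ while $\tilde K_k$ is smooth off $0$. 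Hence $v_k\in\cA_k$, and at $k=0$, $v_0=\delta$ is the convolution unit, invertible in $\overline{\cA_0}$. Norm continuity of $k\mapsto v_kav_{-k}$ on $K$ would be obtained by first establishing uniform boundedness of $\norm{v_k\,\cdot\,v_{-k}}_{\overline{\cA_0}}$ via the Knapp--Stein-type bound of \Cref{thm:u_bounded}, then checking continuity on the dense set of elements of the form $a=\varphi_1\ast a'\ast\varphi_2$ with $\varphi_i\in C^\infty_c(\mathfrak{g})$, where the involved compositions factor through elements of $C^\infty(R^\infty)$ on which $R^{k/m}$ depends norm-continuously in $k$.

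The Type~I property is the heart of the matter. The subspace $C^\infty_c(\mathfrak{g})\subseteq\cA_0$ is a two-sided ideal (convolving a compactly supported smooth function with an element of $\cA_0$, whose singular support lies in $\{0\}$, gives a smooth compactly supported function), and its closure in $\overline{\cA_0}$ coincides with $C^*_r(G)=C^*(G)$ by density of $C^\infty_c$ in $L^1(G)$ and amenability of the connected nilpotent group $G$. This yields a short exact sequence
\[
0\longrightarrow C^*(G)\longrightarrow \overline{\cA_0}\longrightarrow \overline{Q}\longrightarrow 0,
\]
with $\overline{Q}$ the "principal symbol" algebra. The ideal $C^*(G)$ is Type~I since $G$ is connected nilpotent. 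For $\overline{Q}$, I would realize $\overline{\cA_0}$ as a $C^*$-subalgebra of (the multiplier algebra of) $C^*(G\rtimes\Rpt)$ via the dilation structure encoded by \Cref{prop:intDS} — concretely, each $u\in\cA_0$ corresponds to an $\Rpt$-invariant element of $C^*(G)\rtimes\Rpt\cong C^*(G\rtimes\Rpt)$. Since $G\rtimes\Rpt$ is a simply-connected solvable Lie group of Type~I by the Auslander--Kostant criterion \cite{AuslanderKostant,DixmierSolv}, the algebra $C^*(G\rtimes\Rpt)$ is Type~I, hence so is $\overline{Q}$ as a subquotient, and $\overline{\cA_0}$ as an extension of Type~I algebras.

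For the final hypothesis, let $\rho$ be an irreducible representation of $\overline{\cA_0}$. If $\rho(C^*(G))\neq 0$, then by irreducibility $\rho|_{C^*(G)}$ is an irreducible representation of $C^*(G)$, equivalent to an irreducible unitary representation $\pi$ of $G$; its unique extension to $\overline{\cA_0}$ coincides with the first-type representation $\pi_0\in\Sigma_1(\cA)$. If $\rho|_{C^*(G)}=0$, then $\rho$ factors through $\overline{Q}$; by \Cref{rem:vanishing_smooth} every non-trivial $\pi\in\hat G$ produces a representation of $\overline{Q}$ of the form $u\mapsto\pi(\sigma^0(u))\in\Sigma_2(\cA)$, and the identification of $\overline{Q}$ with a subquotient of $C^*(G\rtimes\Rpt)$ — whose irreducibles are parametrized by the non-trivial $\Rpt$-orbits in $\hat G$ — shows that these exhaust the irreducibles of $\overline{Q}$, completing the verification.
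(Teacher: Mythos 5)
Your overall skeleton matches the paper's: the extension $0\to C^*G\to\overline{\cA_0}\to B\to 0$, the reduction of the symbol algebra to $G\rtimes\Rpt$, and an explicit family $(v_k)$. But the step you yourself identify as the main obstacle is the one that is not actually carried out. Realizing $\overline{\cA_0}$ inside the \emph{multiplier} algebra of $C^*(G\rtimes\Rpt)$ does not yield Type~I: multiplier algebras of Type~I $C^*$-algebras are not Type~I (already $M(\cK(H))=\cL(H)$ fails), and a subalgebra of a multiplier algebra inherits neither the Type~I property nor any parametrization of its irreducible representations. The paper's actual mechanism is a \emph{Morita equivalence} between the quotient $B$ and $C^*_1G\rtimes\Rpt$, implemented by the explicit full Hilbert $B$-module completing $C^\infty_{c,1}(\mathfrak{g})$ with inner product $\langle f,g\rangle=\int_0^1\alpha_t(f^*\ast g)\frac{\odif{t}}{t}$ (a Rieffel generalized fixed-point algebra); fullness uses \Cref{prop:intDS}, \Cref{prop:sum_D_inv} and \Cref{thm:Dixmier-Malliavin}. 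Morita equivalence is what transfers both Type~I-ness and the homeomorphism of spectra, and hence the exhaustion of $\Sigma_2(\cA)$. A second unaddressed point in the same direction: you need to know that the quotient $\overline{\cA_0}/C^*G$ \emph{is} the symbol algebra $B$ generated by the second-type representations, which requires the inequality $\norm{u}_{\overline{\cA_0}/C^*G}\leq 2\norm{u}_B$; the paper proves this by an explicit approximate-unit computation with $u-u\ast\alpha_{2^{-n}}(f)$.

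On the family $(v_k)$: the paper deliberately avoids Rockland operators and takes $v_k=f+\int_0^1\alpha_t(f')\frac{\odif{t}}{t^{1+k}}$ with $f,f'$ as in \Cref{lem:inj}, proving invertibility of $v_0$ from injectivity in every irreducible representation via \Cref{prop:simple_arg}. Your route through complex powers $R^{k/m}$ has two problems. First, the smoothness of the kernels $\tilde K_k$ away from the origin is in the literature a \emph{consequence} of the hypoellipticity of Rockland operators, i.e.\ of the Helffer--Nourrigat theorem this paper is re-proving (and your fallback example, a sub-Laplacian on $\mathfrak{g}^1$, is not even Rockland unless $\mathfrak{g}^1$ generates $\mathfrak{g}$, which is not assumed); so there is a real risk of circularity. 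Second, your continuity argument fails as stated: elements of the form $\varphi_1\ast a'\ast\varphi_2$ with $\varphi_i\in C^\infty_c(\mathfrak{g})$ lie in $C^\infty_c(\mathfrak{g})$, hence in the ideal $C^*G$, which is \emph{not} dense in $\overline{\cA_0}$; and a uniform bound on $\norm{v_k\,\cdot\,v_{-k}}$ cannot be read off \Cref{thm:u_bounded} factor by factor since $\pi(v_k)$ is unbounded for $\Re(k)>0$. The paper gets both the uniform bound and the continuity of $k\mapsto v_kav_{-k}$ in one stroke from the quantitative Cotlar--Stein estimate of \Cref{lem:qjsdofjo}; some substitute for that lemma is indispensable.
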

        \begin{proof}
            Let $\Theta$ be the left regular representation of $G$, see \Cref{ex:left_regular}.
           % 
            %Notice that the irreducible representations of $C^*_{1}G$ correspond bijectively to the non-trivial irreducible unitary representations of $\mathfrak{g}$.
            %For any $u\in \cA_0$, $u\ast C^\infty_{c,1}(\mathfrak{g})\subseteq C^\infty_{c,1}(\mathfrak{g})$.
            %Furthermore, by \Cref{thm:u_bounded}, $\norm{u\ast w}_{C^*G}\leq \norm{u}_{\overline{\cA_0}}\norm{w}_{C^*G}$.
            %So, $u$ is a bounded multiplier of $C^*_{1}G$.
            %We claim that for any $w\in C^*_{1}G$, $\alpha_{2^n}(u)\ast w$ and $w\ast \alpha_{2^n}(u)$ converge in $C^*_{1}G$.
            %By symmetry, we only deal with $\alpha_{2^n}(u)\ast w$.
            %Since $\norm{u}_{\overline{\cA_0}}=\norm{\alpha_{2^n}(u)}_{\overline{\cA_0}}$ for any $n\in \N$,
            %it suffices to show that if 
            %The claim follows.
            %We have thus defined a bounded multiplier $\sigma^0(u)$ of $C^*_{1}G$ which is defined by $\sigma^0(u)\ast w=\lim_{n\to +\infty} \alpha_{2^n}(u)\ast w$.
            %It satisfies $\norm{\sigma^0(u)}_{\cL(C^*_{1}G)}\leq \norm{u}_{\overline{\cA_0}}$.
            %Furthermore, if $\pi$ is a non-trivial irreducible unitary representation of $G$, then $\pi(\sigma^0(u))=\pi(\sigma^0(u))$.
            Let $B$ be the Hausdorff completion of $\cA_0$ by 
                \begin{equation*}\begin{aligned}
                    \norm{u}_B:=\norm{\Theta(\sigma^0(u))},
                \end{aligned}\end{equation*}
                where we used \MyCref{rem:extension_at_0}.
            Notice that $B$ is a Hausdorff completion, because if $u\in C^\infty_c(\mathfrak{g})$, then $\norm{u}_B=0$ by \Cref{rem:vanishing_smooth}.
           We have an obvious short sequence 
               \begin{equation}\label{eqn:sqkdmfjsqdjkfqmjsdf}\begin{aligned}
                    0\to C^*G \to \overline{\cA_0}\to B\to 0       
               \end{aligned}\end{equation}
            It is clearly exact at $C^*G$ and $B$. It is also exact in the middle.
            To see this, let $u\in \cA_0$.
            We will show that $\norm{u}_{\overline{\cA_0}/C^*G}\leq 2\norm{u}_B$ which implies that \eqref{eqn:sqkdmfjsqdjkfqmjsdf} is exact.
            Let $f\in C^\infty_c(\mathfrak{g})$ such that $\int f=1$.
            So, 
                \begin{equation*}\begin{aligned}
                 \Theta(u-u\ast \alpha_{2^{-n}}(f))=\Theta(\sigma^0(u))(\mathrm{Id}-\Theta(\alpha_{2^{-n}}(f)))- (\Theta(\sigma^0(u))-\Theta(u))(\mathrm{Id}-\Theta(\alpha_{2^{-n}}(f)))
                \end{aligned}\end{equation*}
            The first term is bounded in norm by $2\norm{u}_B$. So, we only need to show that the second converges in norm to $0$. In fact, by \eqref{eqn:qsjkodfjmqsdf}, if $v=\alpha_2(u)-u\in C^\infty_c(\mathfrak{g})$, then
                \begin{equation*}\begin{aligned}
                            \norm{(\Theta(\sigma^0(u))-\Theta(u))(\mathrm{Id}-\Theta(\alpha_{2^{-n}}(f)))}_{\cL(L^2(G))}\!&=\norm{\Theta\left(\sum_{m=0}^{+\infty}\alpha_{2^m}(v)-\alpha_{2^m}(v)\ast \alpha_{2^{-n}}(f)\right)}_{\cL(L^2(G))}\\
                                                                                                                                                    &\leq \sum_{m=0}^{+\infty}\norm{\alpha_{2^m}(v)-\alpha_{2^m}(v)\ast \alpha_{2^{-n}}(f)}_{L^1}\\
                                                                                                                                                    &= \sum_{m=n}^{+\infty}\norm{v-v\ast \alpha_{2^{-m}}(f)}_{L^1}
                            \end{aligned}\end{equation*}
            By \eqref{eqn:sdkqjfljkqsodfjmoqsdjfmoqksd} and mean value theorem, there exists $C>0$ such that $\norm{\alpha_{2^m}(v)-\alpha_{2^m}(v)\ast f}_{L^1}\leq C2^{-m}$ for all $m\geq 0$.

            To finish the proof of the first two hypotheses of     \Cref{thm:simple_arg_sobolev},
            we need to show that $B$ is of Type I and that its irreducible representations are all equivalent to $\pi(\sigma^0(\cdot))$ for some $\pi$ non-trivial unitary representation of $G$.
            This result  is due to Fermanian-Kammerer and Fischer \cite{FischerDefect}. Our argument is different from theirs. 
            Ours is basically the same as that of Ewert \cite{ewert2021pseudodifferential}.

            For the convenience of the reader we will sketch Ewert's argument.
            Let $C^*_{1}G$ be the completion of $C^\infty_{c,1}(\mathfrak{g})$ in $C^*G$.
            We will show that $B$ is Morita equivalent to $C^*_1 G\rtimes \Rpt$, where $\Rpt$ acts on $C^*_1 G$ by dilations.
            In fact, $B$ is a typical example of Rieffel proper action, see \cite{RieffelProperActions,BussEchterhoffRieffel}.
            Once one has the Morita equivalence, the result follows from Kirillov's orbit method applied to the group $G\rtimes \Rpt$ which is an exponential solvable group, see \cite{LudwigBook}.

            We will use the theory of Hilbert $C^*$-modules and multipliers, see \cite{LanceBook}.
            On the space $C^\infty_{c,1}(\mathfrak{g})$, by \Cref{prop:intDS}, we define the $B$-valued inner product 
                \begin{equation*}\begin{aligned}
                    \langle f,g\rangle=\int_0^1 \alpha_t(f^*\ast g)\frac{\odif{t}}{t}\in B,\quad f,g\in C^\infty_{c,1}(\mathfrak{g}).        
                \end{aligned}\end{equation*}
            This is an inner product. The only non-trivial part is to show that if $u\in \cA_0$, then $\langle f,g\ast u\rangle=\langle f,g\rangle\ast u$.
            In fact, 
                \begin{equation*}\begin{aligned}
                    \langle f,g\ast u\rangle-\langle f,g\rangle\ast u\in C^\infty_c(\mathfrak{g}).        
                \end{aligned}\end{equation*}
            This follows by writing $u$ as in \eqref{eqn:sum_Ds}. Details are left to the reader.
            Let $E$ be the completion of $C^\infty_{c,1}(\mathfrak{g})$ under the norm induced by this inner product, i.e., $\norm{f}_E=\norm{\langle f,f\rangle}_B^{1/2}$.
            The space $E$ is a right-$C^*$-Hilbert $B$-module.
            The Hilbert $E$ module is full, i.e., $\langle C^\infty_{c,1}(\mathfrak{g}),C^\infty_{c,1}(\mathfrak{g})\rangle$ is dense in $B$.
            To see this, let $u\in \cA_0$. Take $n>N$ where $N$ is the depth of the Lie group $G$.
            By \Cref{prop:intDS},  there exists $f\in C^\infty_{c,n}(\mathfrak{g})$ such that $u-\int_0^1\alpha_t(f)\frac{\odif{t}}{t}\in C^\infty_c(\mathfrak{g})$.
            The result follows from the following lemma:
            \begin{lem}
                Any $f\in  C^\infty_{c,n}(\mathfrak{g})$ can be written as sum of elements of the form $f_1\ast f_2$ with $f_1,f_2\in C^\infty_{c,1}(\mathfrak{g})$.
                So, $u$ modulo $C^\infty_c(\mathfrak{g})$ is equal to a sum of elements of the form $\langle f_1^*,f_2\rangle$.
            \end{lem}
            \begin{proof}
                 By \Cref{prop:sum_D_inv}, we can suppose that $f=\delta_{D}\ast f'$ where $D$ is homoegneous of degree $\geq n$.
            Since $n>N$, $D$ isn't a vector field, so we can suppose that $D=D_1D_2$ where $D_1$ and $D_2$ homogeneous of degree $\geq 1$.
            So, $\delta_{D_2}\ast f'\in C^\infty_{c,1}(\mathfrak{g})$. Since $C^\infty_{c,1}(\mathfrak{g})$ is closed under taking adjoints, we can write $\delta_{D_2}\ast f'$ as a sum of elements of the form $h\ast \delta_{D''}$.
            For some $D''$ homogeneous of degree $\geq 1$.
            To summarize, $f$ can be written as a sum of elements of the form $\delta_{D_1}\ast f'\ast \delta_{D_2}$ with $D_1$ and $D_2$ homogeneous of degree $\geq 1$.
            The result follows from \Cref{thm:Dixmier-Malliavin} applied to $f'$.
            \end{proof}
            Since the module $E$ is full, it gives a Morita equivalence between compact operators (in the sense of Kasparov) $\cK(E)$ and $B$.
            By the definition of compact operators on $C^*$-modules, $\cK(E)=C^*_1 G\rtimes \Rpt$.

        We now construct the elements $v_k\in \cA_k$ as in \Cref{thm:simple_arg_sobolev}, see \Cref{rem:K_smaller}.
        \begin{lem}\label{lem:inj}
            There exists $f\in  C^\infty_c(\mathfrak{g})$ and $f'\in C^\infty_{c,1}(\mathfrak{g})$ such that $f,f'$ are positive as elements of the $C^*$-algebra $C^*G$, and
            \begin{enumerate}
                \item\label{lem:inj:1}     If $\pi$ is a unitary representation of $G$, then $\pi(f):L^2(\pi)\to L^2(\pi)$ is injective.
                \item  If $\pi$ is a non-trivial irreducible unitary representation of $G$, then $\pi(f'):L^2(\pi)\to L^2(\pi)$ is injective.
            \end{enumerate}
        \end{lem}
        \begin{proof}
            Let $h\in C^\infty_c(\mathfrak{g})$ be any smooth function such that $\int h=1$.
           We define
            \begin{equation*}\begin{aligned}
                    f=\sum_{n=1}^{+\infty} c_n \alpha_{2^{-n}}(h^*\ast h)   
            \end{aligned}\end{equation*}
        where $c_n>0$ is any sequence which converges to $0$ fast enough that $f\in C^\infty(\mathfrak{g})$.
        The function $f$ is compactly supported by \eqref{eqn:norm_dilations}.
        
        Let $\pi$ be any unitary representation of $G$, $\xi\in L^2(\pi)$.
        If $\pi(f)\xi=0$, then $\pi(\alpha_{2^{-n}}(h))\xi=0$ for all $n\in \N$. Since $\pi(\alpha_{2^{-n}}(h))\xi\to (\int h)\xi=\xi$ in $L^2(\pi)$ for any $\xi\in L^2(\pi)$, see \eqref{eqn:L2pi}, it follows that $\xi=0$.
        
        %To construct $f'$, %We fix a linear basis $x_1,\cdots,x_k$ of $\mathfrak{g}$ such that each $x_i\in \mathfrak{g}^{\omega(i)}$ for some $\omega(i)$.
         %Let $n$ be the least common multiple of $\omega(1),\cdots,\omega(k)$.
         Let $X_1,\cdots,X_k$ be right-invariant vector fields that linearly span $\mathfrak{g}$ at the origin.
         %    \begin{equation*}\begin{aligned}
          %      D=\sum_{i=1}^k        (-1)^{\frac{n}{\omega(i)}} X_i^{\frac{2n}{\omega(i)}}=\sum_{i=1}^k        \left(X_i^{\frac{n}{\omega(i)}}\right)^*X_i^{\frac{n}{\omega(i)}}\in \DO^{2n}(\mathfrak{g}).
           %  \end{aligned}\end{equation*}
        We take 
            \begin{equation*}\begin{aligned}
                f'=\sum_{i=1}^n(\delta_{X_i}\ast f)^*\ast  (\delta_{X_i}\ast f).        
            \end{aligned}\end{equation*}
        Let $\pi$ be a non-trivial irreducible unitary representation, $\xi \in L^2(\pi)$ such that $\pi(f')\xi=0$.
        So, $\pi(\delta_{X_i}\ast f)\xi=0$ for all $i$.
        The vector $\eta=\pi(f)\xi$ is a smooth vector, and $\pi(\delta_{X_i})\eta=0$.
        %By taking the inner product $\langle \pi(D)\eta,\eta\rangle$, we deduce that $\pi(X_i)^{\frac{n}{\omega(i)}}\eta=0$ for all $i$.
        Hence, $\eta$ is an invariant vector. Since $\pi$ is a non-trivial irreducible representation, $\eta=0$.
        So, $\xi=0$.
\end{proof}
        For any $k\in \C$ with $\Re(k)\in [-\frac{1}{2},\frac{1}{2}]$, let
            \begin{equation}\label{eqn:v_k}\begin{aligned}
                    v_k:=f+\int_0^1 \alpha_t(f') \frac{\odif{t}}{t^{1+k}}.  
            \end{aligned}\end{equation}
        By \Cref{prop:intDS}, $v_k\in \cA_k$.
        If $\pi\in \Sigma_1(\cA)$, then injectivity of $\pi(v_0)$ on $L^2(\pi)$ comes from injectivity of $\pi(f)$.
        If $\pi\in \Sigma_2(\cA)$, then injectivity of 
            \begin{equation*}\begin{aligned}
                \pi(\sigma^0(v_0))=\int_0^{+\infty} \alpha_t(f') \frac{\odif{t}}{t}
            \end{aligned}\end{equation*}
         on $L^2(\pi)$ comes from injectivity of $\pi(f')$.
        So, the operator $v_0$ is invertible in $\overline{\cA_0}$ by \Cref{prop:simple_arg}.
        Continuity of $k \mapsto v_{-k} a v_{k}\in \overline{\cA_0}$ for any $a\in \cA_0$ follows immediately from \Cref{lem:qjsdofjo}.
    \end{proof}

\paragraph{Theorem of Helffer, Nourrigat and Glowski:}
By the results, we proved in this section, we can apply \Cref{thm:simple_arg_sobolev} to the calculus $(\cA_k)_{k\in \C}$.
%This isn't sufficient to recover all the results of Helffer, Nourrigat and Glowski, because we only obtain global hypoellipticity and not hypoellipticity which is a local condition.
%To obtain hypoellipticity, we need to add to the calculus $(\cA_k)_{k\in \C}$ the operators of multiplication by compactly supported smooth functions.
%Let $\Theta$ be the left-regular representation of $\mathfrak{g}$ on $L^2(G)$, see \Cref{ex:left_regular}.
%This gives a representation of $\cA_k$ of first type on $L^2(G)$ for any $k\in \C$.
%Notice that $C^\infty(\Theta)$ is the space of smooth functions $f$ on $\mathfrak{g}$ such that for any $D\in \DO(\mathfrak{g})$, $D(f)\in L^2(G)$.
%Let $B=C_0(\mathfrak{g})+\C 1$, the $C^*$-algebra of continuous functions on $\mathfrak{g}$ which have a limit at infinity.
%Let $\cB\subseteq B$ the sub-algebra of smooth functions, $M:B\to \cL(L^2(G))$ be the representation by multiplication operators.
%Clearly $M_\cB C^\infty(\Theta)\subseteq C^\infty(\Theta)$, and the induced map is continuous.
%\begin{lem}
%    For any $k\in \C$, $u\in \cA_k$ and $v\in \cA_{-k}$, $f\in \cB$, $[\Theta(u),M_f]\Theta(v)\in \cL(C^\infty(\Theta))$ extends to a compact operator on $L^2(G)$. 
%\end{lem}
%\begin{proof}
% We will only prove this for $k=0$ and $v=\delta$. The general case is very similar.
% We write $u$ as in \Cref{prop:decomposition_A_k}, 
%     \begin{equation*}\begin{aligned}
%         u=f_0+\int_0^1 \alpha_t(f_1)\frac{dt}{t}
%     \end{aligned}\end{equation*}
% where $f_0\in C^\infty_c(\mathfrak{g})$ and $f_1\in C^\infty_{c,1}(\mathfrak{g})$.
% The operator $[]$   
%\end{proof}
We now show that the following theorem due Glowski \cite{Glowacki2} which is based on a theorem of Helffer and Nourrigat \cite{HelfferRockland}.%, is a corollary of \Cref{thm:simple_arg_sobolev}.
%\Cref{thm:simple_arg_sobolev} to obtain this result, because the hypothesis of \Cref{thm:simple_arg_sobolev} is not symmetric with respect to taking adjoints.
\begin{theorem}\label{thm:Helffer_Nourrigat}
Let $k\in \C$ and $u\in \cA_k$. The following are equivalent
\begin{enumerate}
\item For every $\pi\in \Sigma_2(\cA)$, the maps $C^\infty(\pi)\xrightarrow{\pi(\sigma^k(u))} C^\infty(\pi)$  and $C^\infty(\pi)\xrightarrow{\pi(\sigma^{\bar{k}}(u^*))} C^\infty(\pi)$ are injective.
\item For every $\pi\in \Sigma_2(\cA)$, the maps 
    \begin{equation}\label{eqn:skqdjfkjqsdjfmlqjsdjfmlqksdf}\begin{aligned}
        &\pi(\sigma^k(u)):C^{-\infty}(\pi)\to C^{-\infty}(\pi)\\
&\pi(\sigma^k(u)):H^l(\pi)\to H^{l-\Re(k)}(\pi)&&\forall l\in \R^\nu\\
&\pi(\sigma^k(u)):C^\infty(\pi)\to C^\infty(\pi)
    \end{aligned}\end{equation}
    are topological isomorphisms.
    \item There exists $v\in \Rd{-k}(\mathfrak{g})$ such that $v\ast u-\delta$ and $u\ast v- \delta$ belong to $C^\infty_c(\mathfrak{g})$
%\item For every non-trivial irreducible unitary representation of $\pi$ of $\mathfrak{g}$ and for every (or for any) $s\in \R$, 
%the bounded operator $\pi(u):H^{s+k}(\pi)\to H^s(\pi)$ is Fredholm.
\end{enumerate}
\end{theorem}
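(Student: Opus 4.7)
The directions $3\Rightarrow 2\Rightarrow 1$ are straightforward. For $3\Rightarrow 2$, I apply the symbol $\pi(\sigma^0(\cdot))$ to the identities $uv-\delta,\,vu-\delta\in C^\infty_c(\mathfrak{g})$: by \Cref{rem:vanishing_smooth} these smooth remainders have vanishing symbols, and the composition law from \Cref{thm:pi_k_Helffer} identifies $\pi(\sigma^{-k}(v))$ as a two-sided inverse of $\pi(\sigma^k(u))$ on $C^\infty(\pi)$, extended by continuity to $H^l(\pi)$ and $C^{-\infty}(\pi)$. The direction $2\Rightarrow 1$ is immediate.

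The substantive direction $1\Rightarrow 3$ uses \Cref{thm:simple_arg_sobolev} applied not to $(\cA_k)_{k\in\C}$ itself---which would demand injectivity on all of $\Sigma(\cA)=\Sigma_1(\cA)\sqcup\Sigma_2(\cA)$, something hypothesis~1 does not supply---but to the quotient calculus $\cB_k:=\cA_k/C^\infty_c(\mathfrak{g})$. This quotient is a well-defined $C^*$-calculus: $C^\infty_c(\mathfrak{g})\subseteq \cA_k$ for every $k\in\C$ since dilations preserve $C^\infty_c(\mathfrak{g})$, and $\cA_k\ast C^\infty_c(\mathfrak{g})\subseteq C^\infty_c(\mathfrak{g})$ by the smoothing property of convolution with a smooth compactly supported function. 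The closure $\overline{\cB_0}$ is the Type~I algebra $B=\overline{\cA_0}/C^*G$ from the proof of \Cref{thm:completion}, and its irreducible representations are exactly the representations $\pi(\sigma^0(\cdot))$ for $\pi\in\Sigma_2(\cA)$. The classes $[v_k]\in\cB_k$ of the elements from \eqref{eqn:v_k} inherit the third hypothesis of \Cref{thm:simple_arg_sobolev}. Under hypothesis~1, $[u]\in\cB_k$ and $[u^*]\in\cB_{\bar{k}}$ are injective on smooth vectors for every representation in $\Sigma_2(\cA)$, so part~(1) of \Cref{thm:simple_arg_sobolev} applied to $\cB$ turns the maps in \eqref{eqn:skqdjfkjqsdjfmlqjsdjfmlqksdf} into topological isomorphisms, proving hypothesis~2.

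To reach hypothesis~3, I first reduce to $k=0$: a left parametrix $w\in\cA_0$ for $v_{-k}u$ with $wv_{-k}u-\delta\in C^\infty_c(\mathfrak{g})$ yields $wv_{-k}\in\cA_{-k}$ as a left parametrix for $u$, and the adjoint argument applied to $u^*$ gives the right parametrix; a standard computation shows that the two agree modulo $C^\infty_c(\mathfrak{g})$. By part~(3) of \Cref{thm:simple_arg_sobolev} applied to $\cB$, the class $[v_{-k}u]\in\cB_0$ is invertible in $B$, which provides a representative in $\overline{\cA_0}$ inverting $v_{-k}u$ modulo $C^*G$. The main obstacle---the analytic core of the argument---is to upgrade this inverse modulo $C^*G$ to an inverse modulo $C^\infty_c(\mathfrak{g})$ that actually lives in $\cA_0$, a spectral-invariance type statement for $\cB_0\subseteq B$ that the abstract framework of \Cref{thm:simple_arg_sobolev} does not deliver directly. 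My plan for this step is to take an approximate lift $w_0\in\cA_0$ of an inverse in $B$, giving a remainder $r_0=w_0 v_{-k}u-\delta\in\cA_0$ of arbitrarily small image in $B$, and then iterate a Neumann-type correction organized by the integral representation in \Cref{prop:intDS} to perform an asymptotic summation that stays inside $\cA_0$ and produces a remainder in $C^\infty_c(\mathfrak{g})$.
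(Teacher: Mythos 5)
Your treatment of $3\Rightarrow 2\Rightarrow 1$ is fine and matches the paper. Your route to $1\Rightarrow 2$ via the quotient calculus $\cA_k/C^\infty_c(\mathfrak{g})$, whose order-zero completion is $B=\overline{\cA_0}/C^*G$, is a legitimate variant: the hypotheses of \Cref{thm:simple_arg_sobolev} do hold there ($B$ is of Type I, its irreducible representations are exactly the $\pi(\sigma^0(\cdot))$ for $\pi\in\Sigma_2(\cA)$, and the classes of the $v_k$ work), so you get statement 2 directly rather than through statement 3 as the paper does.

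The gap is exactly where you flag it, and the proposed repair does not work. Having inverted $[v_{-k}u]$ in $B$, you want a parametrix in $\cA_{-k}$ modulo $C^\infty_c(\mathfrak{g})$; your plan is to lift an inverse approximately to $\cA_0$ and run a Neumann-type asymptotic summation. But the correction terms $r_0^{\ast n}$ all have order $0$ --- there is no gain of order from one step to the next --- so there is no Borel-type summation available inside $\cA_0$; the series converges only in the $C^*$-norm, landing in $\overline{\cA_0}$ rather than in the distribution space $\cA_0$, and the remainder it controls lies in $C^*G$, not in $C^\infty_c(\mathfrak{g})$. Passing from ``invertible modulo $C^*G$'' to ``invertible modulo $C^\infty_c(\mathfrak{g})$ with parametrix in $\cA_{-k}$'' is the entire analytic content of $1\Rightarrow 3$, and no spectral-invariance statement of this kind is available off the shelf here.

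The paper's mechanism is different and you should note the key trick you missed: after reducing to $k=0$ and to $u=u^*$, one replaces $u$ by $u^*\ast u+f$ where $f\in C^\infty_c(\mathfrak{g})$ is the positive element of \Crefitem{lem:inj}{1}, injective in \emph{every} unitary representation. Since $f$ is smooth this changes nothing modulo $C^\infty_c(\mathfrak{g})$, but it upgrades hypothesis 1 (injectivity only on $\Sigma_2(\cA)$) to injectivity on all of $\Sigma(\cA)=\Sigma_1(\cA)\sqcup\Sigma_2(\cA)$, so the \emph{full} \Cref{thm:simple_arg_sobolev} applies --- in particular to the left regular representation $\Theta$. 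Then $\Theta(u)$ is invertible on $L^2(G)$ and on $C^\infty(\Theta)$; its inverse commutes with right translations, hence is convolution by a distribution $v$ with $v\ast u=u\ast v=\delta$. One then proves $\singsupp(v)=\{0\}$ by a local maximal hypoellipticity argument (commutators with cutoff functions lower the order by one, \Cref{lem:commutator}), truncates $v$ near the origin to get $w=\chi v$, and verifies $w\in\cA_0$ from the identity $w-\alpha_\lambda(w)=w\ast(\alpha_\lambda(u)-u)\ast\alpha_\lambda(w)$ modulo $C^\infty_c(\mathfrak{g})$. None of these steps is reachable from invertibility in $B$ alone; they require the concrete realization of the inverse as a convolution kernel on the group.
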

\begin{proof}
    The implication $3\implies 2$ follows from \Cref{rem:vanishing_smooth}.
    The implication $2\implies 1$ is trivial. 
    We now prove $1\implies 3$.
    By symmetry, it suffices to show that $u$ has a left parametrix in $\cA_{-k}$.
    By replacing $u$ by $v_{k_1}\cdots v_{k_n}\ast u^*\ast u$ where $v_{k_i}\in \cA_{k_i}$ are as in \eqref{eqn:v_k} and $k_1,\cdots,k_n\in [-\frac{1}{2},\frac{1}{2}]$ such that $k_1+\cdots+k_n=-2\Re(k)$, we can suppose that $k=0$.
    Let $f\in C^\infty_c(\mathfrak{g})$ as in \Crefitem{lem:inj}{1}.
    By replacing $u$ by $u^*u+f$, we can suppose that $u\in \cA_0$ satisfies $u=u^*$, and for any irreducible unitary representation, $\pi(u)$ is injective on $C^\infty(\pi)$, and if $\pi$ is non-trivial, then $\pi(\sigma^0(u))$ is also injective on $C^\infty(\pi)$.
    In particular, \Cref{thm:simple_arg_sobolev} applies to $u$.

    Let $\Theta$ be the left-regular representation, see \Cref{ex:left_regular}.    
    By \Cref{thm:simple_arg_sobolev}, $\Theta(u)$ is a topological isomorphism of $C^\infty(\Theta)$, and of $L^2(G)$.
    Let $T:L^2(G)\to L^2(G)$ be its inverse. From the inclusion $C^\infty_c(\mathfrak{g})\subseteq C^\infty(\Theta)\subseteq C^\infty(\mathfrak{g})$, we deduce that $T(C^\infty_c(\mathfrak{g}))\subseteq C^\infty(\mathfrak{g})$.
%    Hence, we can define the distribution 
  %      \begin{equation*}\begin{aligned}
 %           v\in C^{-\infty}(\mathfrak{g}),\quad \langle f,v\rangle :=T(\bar{f)(0)
   %     \end{aligned}\end{equation*}
    If $R_g:L^2(G)\to L^2(G)$ denotes the right translation by $g\in \mathfrak{g}$, then $\Theta(u)$ commutes with $R_g$.
    So, $T$ commutes with $R_g$.
    It follows that there exists a unique distribution $v\in C^{-\infty}(\mathfrak{g})$ such that $T(f)=v\ast f$ for all $f\in C^\infty_c(\mathfrak{g})$.
    Hence, $v\ast u=\delta=u\ast v$.

    We now show that the singular support of $v$ is $\{0\}$.
    To this end, for any $f\in C^\infty_c(\mathfrak{g})$, let $M_f:L^2(G)\to L^2(G)$ be the multiplication operator.
    It maps $C^\infty(\Theta)$ to $C^\infty(\Theta)$. So, by duality, it is well-defined $C^{-\infty}(\Theta)\to C^{-\infty}(\Theta)$.
    \begin{lem}\label{lem:commutator}
       For any $k\in \C$, $u\in \cA_k$, $v_1,\cdots,v_n\in C^\infty_c(\mathfrak{g})$,   the iterated commutator 
           \begin{equation*}\begin{aligned}
                [\cdots[\Theta(u),M_{v_1}],M_{v_2}],\cdots, ],M_{v_n}] \in \cL(C^{-\infty}(\Theta)).      
           \end{aligned}\end{equation*}
       maps the Sobolev space $H^{l}(\Theta)$ to $H^{l-k+n}(\Theta)$
    \end{lem}
    \begin{proof}
        In this proof, we let the dilation $\alpha_t$ act on functions on $\mathfrak{g}\times \mathfrak{g}$ by the formula $\alpha_t(f)(x,y)=f(\alpha_{t^{-1}}(x),\alpha_{t^{-1}}(y))t^{-Q}$, see \eqref{eqn:dilation_Q}.

        We need to show that if $u'\in \cA_{n-k}$, then $\Theta(u')[\cdots[\Theta(u),M_{v_1}],M_{v_2}],\cdots, ],M_{v_n}]$ is a bounded operator on $L^2(G)$.
        By \Cref{prop:intDS} and \Cref{prop:sum_D_inv}, we can suppose that $u=\int_0^1\alpha_t(D(f))\frac{\odif{t}}{t^{1+k}}$ for some $f\in C^\infty_{c}(\mathfrak{g})$, $D$ is left-invariant homogeneous of degree $>\Re(k)$.
        The iterated commutator $[\cdots[\Theta(u),M_{v_1}],M_{v_2}],\cdots, ],M_{v_n}]$ has a Schwartz kernel $K(x,y)=\int_0^1\alpha_t(h_t)\frac{\odif{t}}{t^{1+k}}$ where
            \begin{equation}\label{eqn:qsdiofjisdqojfioqsdfqsd}\begin{aligned}
                 h_t(x,y)=D(f)(xy^{-1})\Big(v_1(\alpha_t(x))-v_1(\alpha_t(y))\Big)\cdots \Big(v_n(\alpha_t(x))-v_n(\alpha_t(y))\Big)
            \end{aligned}\end{equation}
        Similarly, by \Cref{prop:intDS} and \Cref{prop:sum_D_inv}, we can suppose that $u'=\int_{0}^1 \alpha_t(D'(f'))\frac{\odif{t}}{t^{1+n-k}}$, where $f'\in C^\infty_c(\mathfrak{g})$ and $D'$ is right-invariant homogeneous of degree $>-\Re(k)+n$.
        So, the operator $\Theta(u')[\cdots[\Theta(u),M_{v_1}],M_{v_2}],\cdots, ],M_{v_n}]$ has a kernel which is equal to by a simple change of variables 
            \begin{equation*}\begin{aligned}
					\int_0^1\alpha_t\left(\int_0^1 \int \alpha_s(D'(f'))(xy^{-1})\frac{h_t(y,z)}{t^n} \odif{y}\frac{\odif{s}}{s^{1+n-k}}+\int_0^1\int D'(f')(xy^{-1})\alpha_s(\frac{h_t}{t^n})(y,z)\odif{y}\frac{\odif{s}}{s^{1+k}}\right)\frac{\odif{t}}{t}
            \end{aligned}\end{equation*}      
        We observe that the function 
            \begin{equation*}\begin{aligned}
                    K'_t(x,y)=        \int_0^1 \int \alpha_s(D'(f'))(xy^{-1})\frac{h_t(y,z)}{t^n} \odif{y}\frac{\odif{s}}{s^{1+n-k}}+\int_0^1\int D'(f')(xy^{-1})\alpha_s(\frac{h_t}{t^n})(y,z)\frac{\odif{s}}{s^{1+k}}\odif{y}
            \end{aligned}\end{equation*}
        is a smooth function in $(x,y,t)\in \mathfrak{g}\times \mathfrak{g}\times \R_+$.
        Here, smoothness in $x,y$ comes from the degree of homogenity of $D$ and $D'$, while smoothness in $t$ is obvious from \eqref{eqn:qsdiofjisdqojfioqsdfqsd}.
        Furthermore $K''_t$ has the property that it is equal to $D'_x K''_t(x,y)$ and $D''_yK'''_t(x,y)$ where $K''$ and $K'''$ are smooth functions on $\mathfrak{g}\times \mathfrak{g}\times \R_+$, and $D_x'$ means that $D'$ acts on the $x$ variable, while 
        and $D''_y$ means that $D^t$ acts on the $y$ variable, where $D^t(l):=D(l(y^{-1}))$. 
        This is enough so that the Cotlar Stein method (the integral version) applies exactly like in \Cref{lem:qjsdofjo} and that the operator whose kernel is $\int_0^1\alpha_t(K'_t)\frac{\odif{t}}{t}$ is bounded.
    \end{proof}
    Let us show that the singular support of $v$ is $\{0\}$. We need to show that $u$ is hypoelliptic, i.e., 
        \begin{equation*}\begin{aligned}
            \singsupp(u\ast f)=\singsupp(f),\quad \forall f\in C^\infty_c(\mathfrak{g}).       
        \end{aligned}\end{equation*}
    Notice that $C^\infty_c(\mathfrak{g})\subseteq C^{-\infty}(\Theta)$.
    We will show that $u$ is maximally hypoelliptic, i.e., for any open set $U\subseteq \mathfrak{g}$
        \begin{equation}\label{eqn:qksjfdjsqdjfmsqjdflmqsdf}\begin{aligned}
            u\ast f\in H^{s}(\Theta) \text{ locally on }U \implies f\in H^{s}(\Theta) \text{ locally on }U ,\quad \forall    s\in \R,f\in H^s(\Theta).          
        \end{aligned}\end{equation}
    We recall that a distribution $w$ belongs to $H^s(\Theta)$ by locally on $U$ means that for any cutoff function $\chi\in C^\infty_c(U)$, $\chi w\in H^s(\Theta)$.
    Let us prove \eqref{eqn:qksjfdjsqdjfmsqjdflmqsdf}.
    Notice that by \Cref{thm:simple_arg_sobolev}, \eqref{eqn:qksjfdjsqdjfmsqjdflmqsdf} is satisfied with $U=\mathfrak{g}$, i.e., we have global maximal hypoellipticity.
    Now, let $f\in C^{-\infty}(\Theta)$ such that $u\ast f\in H^{s}(\Theta) \text{ locally on }U$, let $K\subseteq U$ be a compact subset.
    We choose a sequence of cutoff functions $\chi_n\in C^\infty_c(U)$ such that $\chi_n\chi_{n+1}=\chi_{n+1}$, and $\chi_n=1$ on $K$ for all $n$.
    We will show that for some $n$, $\chi_nf\in H^s(\Theta)$.
    To this end, let $t\in \R$ such that $f\in H^t(\Theta)$.
    In the following equations, we will write $[u,\chi_n]$ instead of $[\Theta(u),M_{\chi_n}]$ for simplicity.
    So, 
        \begin{equation*}\begin{aligned}
            u\ast (\chi_1f)=[u,\chi_1](f)+\chi_1 (u\ast f)        
        \end{aligned}\end{equation*}
    By \Cref{lem:commutator}, the right hand side belongs to $H^{\min(t+1,s)}$.
    Hence, by global maximal hypoellipticity, $\chi_1f\in H^{\min(t+1,s)}$.
    Now, 
        \begin{equation*}\begin{aligned}
            u\ast (\chi_2f)&=u\ast (\chi_2\chi_1f)=[u,\chi_2](\chi_1f)+\chi_2 (u\ast \chi_1 f)  =     [u,\chi_2](\chi_1f)+\chi_2 ([u,\chi_1](f)+\chi_1 (u\ast f)     )\\
&=[u,\chi_2](\chi_1f)+[\chi_2 ,[u,\chi_1]](f)+[u,\chi_1](\chi_2f)+\chi_2 (u\ast f)     
        \end{aligned}\end{equation*}
    The first term is in $H^{\min(t+1,s)+1}$, the second in $H^{t+2}$, the third in $H^{\min(t+1,s)+1}$ because $\chi_2f=\chi_2 \chi_1f\in H^{\min(t+1,s)}$, the fourth in $H^s$.
    So, $\chi_2f\in H^{\min(t+2,s)}$.
    Iterating this procedure leads to $\chi_nf\in H^{\min(t+n,s)}$. Maximal hypoellipticity follows and $\singsupp(v)=\{0\}$.
    %Let $K_1$ be a compact neighbourhood of $0$ in $\mathfrak{g}$, $K_2$ be a compact neighbourhood of $K_1$, $K_3$ be a compact neighbourhood of $K_2$.
   \iffalse
    If $\chi\in C^\infty_c(\mathfrak{g})$ vanishes in a neighbourhood of $0$, then by $\singsupp(u)\subseteq \{0\}$, we deduce that
    %So, we need to show that $u$ is hypoelliptic, i.e., 
        \begin{equation*}\begin{aligned}
            &\singsupp(u\ast (\chi v))=\singsupp(\delta-u\ast ((1-\chi)v))\subseteq \{0\}\cup \supp(1-\chi)\\
            &\singsupp(u\ast (\chi v))\subseteq \singsupp(\chi v)\subseteq \supp(\chi).
        \end{aligned}\end{equation*}
        Since $\supp(\chi)\cap \supp(1-\chi)=\emptyset$, $\singsupp(u\ast (\chi v))\subseteq \{0\}$.
       Hence, $\chi' (u\ast (\chi g))\in C^\infty_c(\mathfrak{g})$.
    Hence, $\rho_1(u\ast (\rho_1 g))\in C^\infty_c(\mathfrak{g})$.

    % such that $\rho_1\rho_2=0$, 
    %$\rho_1=1$ on a neighbourhood of $K$
    %on a neighborhood of $K$, $\rho_2=1$ on a neighborhood of $\supp(\rho_1)$ and $\supp(\rho_2)\subseteq \Omega$.

   % So, let $k\in \C$ and $u\in \cA_k$ such that for every $\pi\in \Sigma_2(\cA)$, the maps $C^\infty(\pi)\xrightarrow{\pi(\sigma^k(u))} C^\infty(\pi)$  and $C^\infty(\pi)\xrightarrow{\pi(\sigma^{\bar{k}}(u^*))} C^\infty(\pi)$ are injective.
  
   Since we only added a smooth function, $\rho_1(v\ast (\rho_1 g))\in C^\infty_c(\mathfrak{g})$.
   Now, $v$ has the property that for any $\pi\in \Sigma(\cA)$, $\pi_k(v)$ is injective on $C^\infty(\pi)$.
   So, if we consider $w=M_{\rho_1}vM_{\rho_1}+M_{\rho_2}vM_{\rho_2}$ in the $C^*$-calculus obtained from adding the algebra $\cB$ to $(\cA)_{k\in \C}$, then $w$ has the property that for any $\pi\in \Sigma(\cC)$, $\pi_k(w)$ is injective on $C^\infty(\pi)$.
   Hence, by \Cref{thm:simple_arg_sobolev}, $\Theta(w)$ is a topological isomorphism of $C^\infty(\Theta)$.
    $\Theta(w)$ maps $g$ to
       \begin{equation*}\begin{aligned}
            u+f        
       \end{aligned}\end{equation*}
   \fi

   Let $w=\chi v$ where $\chi\in C^\infty_c(\mathfrak{g})$ is equal to $1$ in a neighbourhood of $0$.
   So, $w\ast u-\delta\in C^\infty_c(\mathfrak{g})$.
   By taking the adjoint and using the fact that $u^*=u$, it follows that $u\ast w^{*}-\delta\in C^\infty_c(\mathfrak{g})$.
   Putting the two equations together, we deduce that $w-w^{*}\in C^\infty_c(\mathfrak{g})$. So, $w$ is a two-sided parametrix of $u$.
   Finally, one has 
       \begin{equation*}\begin{aligned}
            w-\alpha_\lambda(w)=w\ast (\alpha_\lambda(u)-u)\ast \alpha_\lambda(w) \mod C^\infty_c(\mathfrak{g}).        
       \end{aligned}\end{equation*}
   Hence, $w-\alpha_\lambda(w)\in C^\infty_c(\mathfrak{g})$. So, $w\in \cA_0$.
    \end{proof}
    \begin{rem}
        Let us explain a little, what is going on in the proof of \MyCref{thm:Helffer_Nourrigat}. Our result, \MyCref{thm:simple_arg_sobolev}[3] gives global maximal hypoellipticity. To construct the parameterix, one needs local maximal hypoellipticity. 
        To get this, one needs the calculus to contain cutoff functions and that the commutator with cutoff functions decreases the order by $1$. Once one has this, the argument (which is quite standard) following \MyCref{lem:commutator} gives local maximal hypoellipticity from global maximal hypoellipticity.
        
        What we could have done is from the very beginning define a calculus which contains both $M_f$ for $f\in C^\infty_c(\mathfrak{g})$ as operators of order $0$ and $\cA_k$ as operators of order $k$, and then prove that in this calculus taking commutator with $M_f$ reduces the order by $1$.
        This calculus was defined in \cite{ChrGelGloPol}. We didn't take this approach as we thought it complicates the rest of this section. Instead, we took the approach of proving (in a very adhoc manner) the required commutator condition (\MyCref{lem:commutator}).

        In our previous work \cite{MohsenMaxHypo}, we didn't have this difficulty because our calculus already contained multiplication by smooth functions, and we proved that taking the commmutator with multiplication operators reduces the order by $1$.
    \end{rem}

   %So for any $\pi\in \Sigma(\cA)$, $\pi_k(u+f)$ is injective on $C^\infty(\pi)$.
    % satisfies the hypothesis of \Cref{thm:simple_arg_sobolev}, see \Cref{rem:vanishing_smooth}.
    %By replacing $u$ with $u^*u+g_1$ (to find left inverse and left parametrix) and $uu^*+g_1$ (to find right inverse and right parametrix), we can suppose that $u$ satisfies the hypothesis of \Cref{thm:simple_arg_sobolev}.

\begin{refcontext}[sorting=nyt]
\printbibliography
\end{refcontext}
\end{document}